\documentclass[12pt]{amsart}
\usepackage{amscd, amssymb, graphics}
\usepackage{amsfonts}
\usepackage{amsmath}
\usepackage{amsxtra}
\usepackage{latexsym}
\usepackage[mathcal]{eucal}
\usepackage{graphics,colortbl}
\usepackage{tikz-cd}  
\input xy
\xyoption{all}
\usepackage{epsfig}
\usepackage[pdftex, bookmarks, colorlinks, breaklinks]{hyperref}

\newtheorem{theorem}{Theorem}[section]
\newtheorem*{theoremA}{Theorem A}
\newtheorem*{theoremB}{Theorem B}
\newtheorem*{theoremC}{Theorem C}
\newtheorem{corollary}[theorem]{Corollary}
\newtheorem{lemma}[theorem]{Lemma}
\newtheorem{proposition}[theorem]{Proposition}
\theoremstyle{definition}

\newtheorem{definition}[theorem]{Definition}
\numberwithin{equation}{section}

\theoremstyle{remark}
\newtheorem{remark}[theorem]{Remark}
\newtheorem{example}[theorem]{Example}

\newcommand{\ben}{\begin{enumerate}}
\newcommand{\een}{\end{enumerate}}
\newcommand{\bit}{\begin{itemize}}
\newcommand{\eit}{\end{itemize}}

\def\id{{\text{id}}}

\def\QED{\nobreak\quad\ifmmode\text{Q.E.D.}\else{\rm Q.E.D.}\fi}

\def\Aut{\operatorname{Aut}}
\def\Hom{\operatorname{Hom}}
\def\End{\operatorname{End}}

\begin{document}

\title[Shadowing Endomorphisms]{Shadowing Endomorphisms of Compact Groups}

\author[D. Peng]{Dekui Peng}
 \address[D. Peng]
{Institute of Mathematics, Nanjing Normal University, Nanjing 210024, China}
\email{pengdk10@lzu.edu.cn}

\subjclass[2020]{Primary 37B65, 22C05; Secondary 22D05.}

\keywords{Shadowing property, compact group, group endomorphism, Pontryagin duality, stable image, semisimple compact group, shift map.}

\begin{abstract}
	We characterize shadowing for continuous endomorphisms of compact Hausdorff groups. First, we prove that an endomorphism has shadowing if and only if its restriction to the identity component does, reducing the problem to compact connected groups. Passing to the stable image then reduces the analysis to surjective endomorphisms.
	
	For a compact connected abelian group $A$, let $T_{\mathbb Q}$ be the rational linear endomorphism induced by the dual map on $\widehat A\otimes_{\mathbb Z}\mathbb Q$. We show that the endomorphism of $A$ has shadowing if and only if every finite-dimensional $T_{\mathbb Q}$-invariant subspace is hyperbolic.
	
	For a general compact connected group, let $A$ and $S$ denote respectively the connected central and semisimple parts of its stable image. The induced endomorphism on $S/Z(S)$ determines an injective map on the set of simple factors. We prove that shadowing holds exactly when the rational dual map on $A$ is hyperbolic on every finite-dimensional invariant subspace and the induced map on the simple factors has no periodic point.
\end{abstract}

\maketitle

\section{Introduction}

The shadowing property is one of the basic stability notions in topological
dynamics. It asserts that every approximate orbit with sufficiently small
errors is uniformly traced by a genuine orbit. Originating in the study of
hyperbolic dynamics and structural stability, shadowing also plays an
important role in symbolic dynamics and inverse-limit representations; see
\cite{AokiHomeo, AH, Kitchens, LM, Pilyugin, Walters}. Good and Meddaugh extended the usual metric
definition to arbitrary compact Hausdorff spaces and related shadowing on
zero-dimensional compact spaces to inverse systems of shifts of finite type
\cite{GM}. Further results on inverse limits and variants of shadowing can be
found in \cite{Bernardes,CL,GOP}.

Compact group endomorphisms form a natural setting in which shadowing can be
described through algebraic structure. Compact groups carry canonical
uniformities, quotient homomorphisms lift pseudo-orbits, and extensions by
totally disconnected normal subgroups behave particularly well. In the
abelian case, Pontryagin duality converts a continuous endomorphism into an
endomorphism of a discrete torsion-free group, allowing shadowing to be
expressed in spectral and module-theoretic terms. We refer to \cite{HM,Morris}
for compact groups and Pontryagin duality, and to
\cite{KitchensSchmidt,Schmidt} for algebraic dynamics on compact groups.

The study of shadowing for compact group automorphisms goes back to Aoki. In
\cite{AokiSolenoid}, he considered automorphisms of finite-dimensional compact
connected abelian groups, or solenoids, and proved that two-sided shadowing is
equivalent to hyperbolicity of the associated finite-dimensional lifting
system. Equivalently, the corresponding rational linear automorphism has no
eigenvalue on the unit circle. Aoki also studied the relation with
topological stability; see \cite{AokiCorrection} for the subsequent
correction.

Aoki and Dateyama later introduced the OE-property and proved that, for
automorphisms of compact metrizable groups, it is equivalent to two-sided
shadowing \cite{AD}. Their structural analysis identifies the two mechanisms
that also appear in the present paper. In the connected abelian part,
finite-dimensional algebraic directions are governed by hyperbolicity, while
the nonalgebraic part is of shift type. In the semisimple part, the
automorphism permutes the simple factors of the centre-free quotient:
infinite orbits yield two-sided full shifts, whereas finite orbits produce
semisimple Lie obstructions.

The present paper gives a direct characterization for arbitrary
endomorphisms of compact groups, without assuming invertibility or
metrizability. Unlike the hyperspace formulation of the OE-property, our final
criterion is stated directly in terms of the rationalized Pontryagin dual and
the action on the simple factors of the semisimple quotient. This makes the
two sources of shadowing transparent. The passage from automorphisms to
endomorphisms also introduces genuinely new phenomena. On the abelian side,
zero may occur as an eigenvalue and does not obstruct shadowing. On the
semisimple side, the map on the simple factors is injective but need not be
surjective, so one-sided full shifts occur in addition to two-sided shifts.
Moreover, shadowing is not inherited by arbitrary closed invariant subgroups,
even for compact group automorphisms. Consequently, the reduction to the
identity component requires a separate argument.

\subsection{Main results}

Our first theorem shows that the totally disconnected quotient of a compact
group creates no additional obstruction. As usual, \(G_0\) denotes the
identity component of \(G\).

\begin{theoremA}[see Theorem~\ref{thmA}]
Let \(G\) be a compact group and let \(\alpha\in\End(G)\). Then
\(\alpha\) has shadowing if and only if \(\alpha|_{G_0}\) has
shadowing.
\end{theoremA}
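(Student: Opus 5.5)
We would prove Theorem A by relating both $\alpha$ and $\alpha|_{G_0}$ to the action on a quotient by a small open normal subgroup. Shadowing is taken in the uniform sense of Good--Meddaugh, and on a compact group we use the canonical (two-sided) uniformity. Its entourages are generated by sets $U_V=\{(x,y): x^{-1}y\in V\}$ with $V$ ranging over identity neighbourhoods, and one can take $V$ conjugation-invariant. The structural input is that $G/G_0$ is profinite. Hence for every identity neighbourhood $W$ there is an open normal subgroup $N$ of $G$ with $N\subseteq G_0W$. In fact, since $G_0=\bigcap N$ over open normal $N$, compactness gives $N\subseteq G_0 W$ for suitable $N$.

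\emph{Shadowing of $\alpha$ implies shadowing of $\alpha|_{G_0}$.} Fix $\epsilon$ (a conjugation-invariant identity neighbourhood) and take the $\delta$ that works for $\alpha$. A $\delta$-pseudo-orbit $(x_n)$ in $G_0$ is then $\epsilon$-shadowed by the true orbit of some $y\in G$. We must replace $y$ by a point of $G_0$. Write $q:G\to G/G_0$. The images satisfy $q(\alpha^n y)\in q(V_\epsilon)$ for all $n$, so the orbit of $q(y)$ under the induced map $\bar\alpha$ stays in a small neighbourhood of the identity of the profinite group $G/G_0$. Choosing $\epsilon$ so small that $q(V_\epsilon)$ lies in an open subgroup $\bar N$, the $\bar\alpha$-orbit of $q(y)$ stays in $\bar N$. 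We would then correct $y$ to $y'=yz$, with $z$ in the totally disconnected part, arranged so that $y'\in G_0$ and the orbit moves by at most $\epsilon$. The difficulty is that $G_0$ need not be complemented. The intended fix is to apply shadowing of $\alpha$ not to $(x_n)$ itself but to the pseudo-orbit $(x_n k_n)$, where $k_n$ runs through the orbit of an element of $\bigcap_n \alpha^{-n}(N)$. Then compare the two shadowing points: the quotient $y_1^{-1}y_2$ has an orbit staying near $G_0$. A cleaner route is to show the set $K=\{g:\alpha^n g\in N\ \forall n\}$ is a closed subgroup with $K\subseteq G_0 V$, and that orbits in $K$ are $\epsilon$-close to orbits in $G_0$. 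The second claim follows by a compactness argument on the stable set of $\bar\alpha$ in $G/G_0$.

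\emph{Shadowing of $\alpha|_{G_0}$ implies shadowing of $\alpha$.} Given an $\epsilon$, pick an open normal $N\subseteq G_0V_\epsilon$ with $\alpha(N)\subseteq N$; such an $N$ can be obtained by intersecting finitely many preimages, using that $G/G_0$ admits a basis of open normal subgroups. Let $(x_n)$ be a $\delta$-pseudo-orbit in $G$, with $\delta$ below the index scale of $N$, so that $\alpha(x_n)N=x_{n+1}N$. Then the cosets $x_nN$ form an exact orbit of the finite-type map on $G/N$, and $x_0^{-1}$-translating reduces to a pseudo-orbit in $N$. For this we use the twisted form $g_n=\alpha^n(x_0)^{-1}x_n$ and the fact that conjugation-invariance preserves $\delta$-closeness. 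Finally, write $N\subseteq G_0 V$, project $g_n$ to $G_0$ up to a $V$-error, shadow inside $G_0$, and translate back by $\alpha^n(x_0)$.

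\emph{Main obstacle.} We expect the hardest step to be the first direction, since, as the introduction stresses, shadowing is not inherited by closed invariant subgroups. What makes it work must be the profiniteness of $G/G_0$. Our first attempt will be the stable-set argument: show that a true orbit staying inside $G_0V$ for all time can be uniformly approximated by a true orbit in $G_0$. We would get this by an inverse-limit (Mittag-Leffler) argument over a decreasing net of open normal subgroups.
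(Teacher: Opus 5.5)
\textbf{Gap in the necessity direction.} The decisive step is never established. Everything reduces to the claim that a true orbit staying in $G_0V$ forever is uniformly $\epsilon$-close to an orbit in $G_0$. Equivalently, the coset $y^{-1}G_0$ must meet the local stable set $M_\epsilon=\{w:\alpha^n(w)\in\epsilon\text{ for all }n\geq 0\}$. The compactness/Mittag--Leffler argument you suggest does not give this. The compact sets $K_N=\bigcap_n\alpha^{-n}(N)$ do decrease to $G_0$, so compactness yields $K_N\subseteq G_0W$ for any prescribed neighbourhood $W$. But $G_0M_\epsilon$ is in general not a neighbourhood of $G_0$, and knowing that an orbit stays in $G_0W$ is just your hypothesis again.

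What is really required is a lifting property for local stable sets across the extension $G_0\to G\to G/G_0$, and this needs genuine finiteness input. For abelian $G$ it can be proved by duality. The torsion part of the $\mathbb Z[\widehat\alpha]$-module generated by the finitely many characters defining $\epsilon$ is finitely generated because $\mathbb Z[u]$ is Noetherian, and divisibility of $\mathbb T$ then produces $y'$. Nothing comparable is offered for nonabelian $G$.

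The paper's diagonal example $D\leq\mathbb T^{\mathbb Z}$ shows that the analogous claim is false for general closed invariant subgroups; otherwise your argument would give shadowing of $\sigma|_D$. So any proof must exploit the specific structure of $G_0$. The paper does this by a different and much heavier route:
\begin{enumerate}
\item reduction to the stable image;
\item passage to the natural extension to obtain an automorphism;
\item reduction to metrizable quotients via the inverse-limit theorem;
\item in the metrizable automorphism case, verification of both conditions of Theorem~\ref{thm:connected-endo-shad} for $\gamma|_{G_0}$.
\end{enumerate}
The last step uses Lemmas~\ref{lem:abelian-comp} and~\ref{lem:AD-semisimple} of Aoki--Dateyama to transport a non-hyperbolic invariant subspace, or a periodic simple factor, of $G_0$ to a suitable subquotient of $G$, where it contradicts shadowing.

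\textbf{Failed step in the sufficiency direction.} You pick an open normal $N\subseteq G_0V$ with $\alpha(N)\subseteq N$ ``by intersecting finitely many preimages''. A finite intersection of preimages is not forward invariant, and small forward-invariant open subgroups need not exist at all. Take the one-sided shift $\sigma_+$ on $(\mathbb Z/2)^{\mathbb N}$, where $G_0$ is trivial. Every open subgroup contains some $U_k=\{x:x_0=\cdots=x_k=0\}$, and $\sigma_+^{k+1}(U_k)$ is the whole group. Hence the only forward-invariant open subgroup is the group itself. Without invariance, put $e_i=\alpha(x_i)^{-1}x_{i+1}$; then $g_n=\alpha^n(x_0)^{-1}x_n=\alpha^{n-1}(e_0)\cdots e_{n-1}$ need not stay in $G_0V$ (here $G_0V=V$), and there is nothing to shadow inside $G_0$. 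The missing ingredient is that every endomorphism of a compact totally disconnected group has shadowing. This genuinely uses symbolic dynamics: group shifts of finite type and inverse limits, as in Example~\ref{examples}(2). Combined with Lemma~\ref{lem:extension-shadowing} applied to $G_0\trianglelefteq G$, this is how the paper proves this direction.
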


The sufficiency follows from the extension theorem, since the quotient
\(G/G_0\) is totally disconnected. For the converse, we first treat
automorphisms. In the compact metrizable case, the connected structural
criterion proved later in the paper, together with two auxiliary lemmas of
Aoki and Dateyama, yields shadowing on the identity component. The general
automorphism case is then obtained from invariant metrizable quotients and the
inverse-limit theorem. Surjective endomorphisms are reduced to automorphisms
through their natural extensions, and arbitrary endomorphisms are finally
handled by passing to their stable images. Thus Theorem~A reduces the
shadowing problem for compact group endomorphisms to the connected case.

Although the proof of the metrizable automorphism case of
Lemma~\ref{lem:aut-component} is postponed until after the connected
structural criterion, there is no circularity in the argument. Indeed,
Theorem~\ref{thm:connected-endo-shad} is proved independently of
Theorem~A and Lemma~\ref{lem:aut-component}, using only the stable-image
reduction and the separate criteria for the abelian and semisimple parts.
The connected criterion is then used to complete the deferred proof of
Lemma~\ref{lem:aut-component}, and hence the proof of Theorem~A.

We next treat connected compact abelian groups. Let \(A\) be such a group,
let \(\beta\in\End(A)\), put \(\Gamma=\widehat A\), and let
\(T=\widehat\beta\). Since \(\Gamma\) is torsion-free, \(T\) extends to a
\(\mathbb Q\)-linear endomorphism \(T_{\mathbb Q}\) of
\(V=\Gamma\otimes_{\mathbb Z}\mathbb Q\). A finite-dimensional invariant
subspace is called hyperbolic if the spectrum of the restricted map is
disjoint from the unit circle. Since \(T_{\mathbb Q}\) need not be invertible,
the eigenvalue zero is allowed.

\begin{theoremB}[see Theorem~\ref{thm:abelian-endo-shad}]
Let \(A\) be a compact connected abelian group and let
\(\beta\in\End(A)\). Then \(\beta\) has shadowing if and only if
\(T_{\mathbb Q}|_W\) is hyperbolic for every finite-dimensional
\(T_{\mathbb Q}\)-invariant subspace \(W\leq V\).
\end{theoremB}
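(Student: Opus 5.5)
We first reduce to the surjective case and then split the dual module. Replacing \(A\) by its stable image \(A_\infty=\bigcap_n\beta^n(A)\), which is connected and on which \(\beta\) is surjective, should not change shadowing. Points of \(A\) are pulled into \(A_\infty\) uniformly fast, so pseudo-orbits in \(A\) are eventually close to pseudo-orbits in \(A_\infty\). Dually, \(\widehat{A_\infty}=\Gamma/\bigcup_n\ker T^n\) after saturation. The zero-eigenvalue generalized eigenspaces of \(T_{\mathbb Q}\) are exactly what is killed here, and this is why eigenvalue zero is harmless. So we may assume \(\beta\) surjective, i.e.\ \(T\) injective. We then pass to the natural extension, a compact connected abelian group with an automorphism, whose dual is the direct limit \(\Gamma_\infty=\varinjlim(\Gamma\xrightarrow{T}\Gamma\to\cdots)\). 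One-sided shadowing of \(\beta\) and two-sided shadowing of the extension should correspond, because the inverse limit is along the factor map \(\beta\) with connected fibers handled by the same inverse-limit theorem. The rational space is unchanged up to inverting \(T_{\mathbb Q}\) on its injective part.

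Next, decompose \(V\) into its \emph{locally finite} part \(V_{\mathrm{lf}}\) and the rest. Here \(V_{\mathrm{lf}}\) is the union of all finite-dimensional invariant subspaces, i.e.\ the \(\mathbb Q[t]\)-torsion submodule. Let \(\Delta=\Gamma\cap V_{\mathrm{lf}}\), a pure invariant subgroup. This gives an exact sequence \(0\to \Delta\to\Gamma\to\Gamma/\Delta\to 0\). Dually, \(A\) is an extension of the compact group \(\widehat\Delta\) (an inverse limit of finite-dimensional solenoids) by \(\widehat{\Gamma/\Delta}\), on whose dual \(T_{\mathbb Q}\) acts torsion-freely as a \(\mathbb Q[t]\)-module. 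The intended picture is as follows. On \(\widehat{\Gamma/\Delta}\) the dynamics is of "shift type": a torsion-free \(\mathbb Q[t]\)-module contains free submodules, so the group factors onto (or is built from) full shifts \(\mathbb T^{\mathbb N}\) or \(\mathbb T^{\mathbb Z}\). We aim to show such a \(T\)-module always yields shadowing by a direct construction. One picks a generator set, writes the group as an inverse limit of groups \(\widehat{\mathbb Z[t]^k}=(\mathbb T^k)^{\mathbb N}\) with shift, and shadows coordinatewise using that shifts have shadowing and the inverse-limit theorem. On \(\widehat\Delta\), each finite-dimensional invariant \(W\) gives a solenoid quotient with automorphism governed by \(T_{\mathbb Q}|_W\). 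Aoki's theorem \cite{AokiSolenoid}, adapted to one-sided shadowing, gives shadowing iff \(T_{\mathbb Q}|_W\) is hyperbolic. The inverse-limit theorem over the directed family of such \(W\) then gives shadowing on \(\widehat\Delta\) iff all are hyperbolic.

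For sufficiency, we combine the two parts. Shadowing passes to extensions when both the subgroup and quotient have shadowing and the quotient pseudo-orbit can be lifted. Quotient homomorphisms lift pseudo-orbits, and we correct by shadowing in the fiber using that \(\widehat\Delta\)-part is expansive-hyperbolic, so its shadowing is uniform. For necessity, suppose some finite-dimensional invariant \(W\) is non-hyperbolic. Take the smallest such \(W\) containing an eigenvector for a unimodular eigenvalue. Then \(A\) maps onto the solenoid \(\widehat{\Gamma\cap W}\) with a non-hyperbolic endomorphism. We must show shadowing fails on \(A\) itself, not merely on the factor, since shadowing does not descend to factors in general. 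The plan is to build an explicit \(\delta\)-pseudo-orbit drifting slowly along the central (unit-modulus) direction inside the one-parameter subgroups of the solenoid. We then lift it to \(A\) via a continuous local section over the connected factor, using the exponential/arc-component structure. Any true orbit shadowing it in \(A\) would project to a shadowing orbit in the solenoid, contradicting the drift, which accumulates linearly while orbits in the central direction stay bounded in displacement.

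The main obstacle is the positive direction on the non-locally-finite part: showing that the torsion-free \(\mathbb Q[t]\)-module part always contributes shadowing, even though \(\Gamma/\Delta\) need not be free over \(\mathbb Z[t]\) (e.g.\ localizations, infinite-rank pieces). I would first try to exhibit \(\Gamma/\Delta\) as a directed union of finitely generated \(T\)-invariant subgroups. Each is torsion-free over \(\mathbb Z[t]\) after rationalization, hence embeds in a free module with torsion cokernel. I would then reduce via the inverse-limit theorem to factors that are finite-to-one extensions of full shifts on tori, handling the finite torsion by the totally disconnected extension theorem.
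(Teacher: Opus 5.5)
The genuine gap is in your necessity direction, which rests on a false premise. You write that shadowing ``does not descend to factors in general'', but for the factors occurring here it does. Let $\pi:A\to\widehat{\Delta_W}$ be the dual of $\Delta_W:=W\cap\Gamma\hookrightarrow\Gamma$, with $\pi\circ\beta=\beta_W\circ\pi$. Then $\pi$ is a continuous surjective homomorphism of compact groups, hence open. A $\pi(V_0)$-pseudo-orbit $(y_i)$ therefore lifts step by step to a $V_0$-pseudo-orbit: pick $v_i\in V_0$ with $\pi(v_i)=\beta_W(y_i)^{-1}y_{i+1}$ and put $x_{i+1}=\beta(x_i)v_i$. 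The image of any point shadowing the lift then shadows $(y_i)$. This is Lemma~\ref{lem:quotient-shadowing}. Since $\Delta_W$ is $T$-invariant with $\Delta_W\otimes_{\mathbb Z}\mathbb Q=W$, shadowing passes from $A$ to $\widehat{\Delta_W}$, and Proposition~\ref{prop:finite-dim} then forces $T_{\mathbb Q}|_W$ to be hyperbolic. That is the whole argument.

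Your substitute (a slowly drifting pseudo-orbit lifted by a continuous local section) is unnecessary, and as sketched it does not work:
\begin{enumerate}
\item The claim that orbits in the central direction ``stay bounded in displacement'' fails once there is a nontrivial Jordan block. For $(x,y)\mapsto(x,x+y)$ on $\mathbb T^2$, lifted orbits grow linearly.
\item A non-real unimodular eigenvalue has no eigenvector in $V$. Your ``smallest $W$'' will then typically contain hyperbolic directions as well (e.g.\ for a Salem polynomial), and the drift must be controlled against them.
\item A continuous local section of $A\to\widehat{\Gamma\cap W}$ over a possibly non-locally-connected solenoid would itself need justification.
\end{enumerate}
Your own plan already relies on the descent you doubt. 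Passing from shadowing of the natural extension back to $\beta$, and the ``only if'' in your inverse-limit statement for $\widehat\Delta$, are both instances of Lemma~\ref{lem:quotient-shadowing}.

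Your sufficiency half is essentially the paper's argument. You split off $\Gamma_{\mathrm{alg}}=\Gamma\cap V_{\mathrm{alg}}$ (your $\Delta$) and write $\widehat{\Gamma_{\mathrm{alg}}}$ as an inverse limit of finite-dimensional solenoids. You write $\widehat{\Gamma/\Gamma_{\mathrm{alg}}}$ as an inverse limit over finitely generated invariant pieces, each dominated by a full shift, and recombine with Lemma~\ref{lem:extension-shadowing}.

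Your $\mathbb Z[t]$ version of the shift step is fine and parallels the paper's $\Lambda_E=M_E\cap\Lambda\subseteq M_E\cong\mathbb Q[u]^{r_E}$, whose dual is $(\widehat{\mathbb Q}^{\,r_E})^{\mathbb N}$. Note, however, that $dN\subseteq\mathbb Z[t]^r$ makes $\widehat N$ a \emph{quotient} of $(\mathbb T^r)^{\mathbb N}$ by a totally disconnected kernel that is generally infinite. Nothing is ``finite-to-one'', and Lemma~\ref{lem:quotient-shadowing} finishes the step directly.

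In the final extension step the fibre is $\widehat{\Gamma/\Delta}$, not $\widehat\Delta$. Lemma~\ref{lem:extension-shadowing} needs no expansivity, and an infinite inverse limit of hyperbolic solenoids need not be expansive anyway.

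Finally, your stable-image and natural-extension reductions are valid but avoidable. The paper absorbs the eigenvalue $0$ through a Fitting decomposition inside Proposition~\ref{prop:finite-dim}. The solenoid factors $\widehat{W\cap\Gamma}$ carry surjective endomorphisms such as $z\mapsto z^2$ rather than automorphisms, so the paper reaches Aoki's automorphism setting by lifting to $\widehat{\mathbb Q}^{\,n}$ through a totally disconnected kernel.
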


Necessity is obtained by passing to finite-dimensional solenoidal factors. For
the converse, we regard \(V\) as a \(\mathbb Q[u]\)-module. Its algebraic
part is recovered from finite-dimensional hyperbolic factors, while the
quotient by the algebraic part is torsion-free over \(\mathbb Q[u]\) and is
assembled from free modules. The duals of these free modules are one-sided
full shifts. The two parts are then recombined by the extension theorem.

For the general connected case, let
\(K=G_\infty=\bigcap_{n\geq0}\alpha^n(G)\) be the stable image and put
\(\beta=\alpha|_K\). Then \(\beta\) is surjective. Set
\(A=Z(K)_0\) and \(S=K'\). The standard structure theorem gives
\(K=AS\), where \(A\cap S\) is central and totally disconnected. Moreover,
\[
S/Z(S)\cong\prod_{i\in I}S_i,
\]
where every \(S_i\) is a centre-free simple connected compact Lie group. The
endomorphism induced by \(\beta|_S\) is encoded by an injective map
\(\tau:I\to I\), as described in Lemma~\ref{lem:semisimple-index-map}.

\begin{theoremC}[see Theorem~\ref{thm:connected-endo-shad}]
Let \(G\) be a compact connected group and let \(\alpha\in\End(G)\).
With the notation above, \(\alpha\) has shadowing if and only if:
\begin{enumerate}
\item \(T_{\mathbb Q}|_W\) is hyperbolic for every finite-dimensional
\(T_{\mathbb Q}\)-invariant subspace
\(W\leq\widehat A\otimes_{\mathbb Z}\mathbb Q\);
\item the injective map \(\tau\) has no periodic point.
\end{enumerate}
\end{theoremC}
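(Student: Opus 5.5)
We prove Theorem~C by reducing to the stable image and then splitting $K$ into its central and semisimple parts, handling each with the abelian and semisimple criteria developed later in the paper (Theorem~B and the analysis around Lemma~\ref{lem:semisimple-index-map}).

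\textbf{Step 1: reduction to the stable image.} First we show that $\alpha$ has shadowing if and only if $\beta=\alpha|_K$ has shadowing. Compactness gives $\alpha^n(G)\to K$ in the sense that every neighbourhood of $K$ contains some $\alpha^n(G)$. Hence a pseudo-orbit of $\alpha$ is, after finitely many steps, uniformly close to $K$. It can then be compared with a pseudo-orbit of $\beta$, using uniform continuity and the canonical uniformity of $G$. Conversely, a pseudo-orbit in $K$ is shadowed in $G$ by some point $x$, and $\alpha^n(x)$ eventually lies near $K$; a small correction then gives a shadowing point inside $K$. We expect this to be the stable-image lemma announced in the introduction. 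From here on we assume $\alpha=\beta$ is surjective on the compact connected group $K$.

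\textbf{Step 2: splitting $K$ into its central and semisimple parts.} Both $A=Z(K)_0$ and $S=K'$ are characteristic, so both are $\beta$-invariant. Put $D=A\cap S$, which is central and totally disconnected. Consider the surjection
\[
A\times S\to K,\qquad (a,s)\mapsto as .
\]
It has totally disconnected kernel $\cong D$ and intertwines $\beta|_A\times\beta|_S$ with $\beta$. Similarly, $K/S\cong A/D$ and $K/A\cong S/D$, and $Z(S)$ is totally disconnected. We use two principles: the extension theorem (shadowing lifts through extensions by totally disconnected invariant normal subgroups), and its converse for quotients by totally disconnected invariant subgroups of connected groups. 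We prove this converse directly: pseudo-orbits lift because totally disconnected kernels allow small lifts. Together these give
\[
\beta \text{ has shadowing} \iff \beta|_A \text{ and } \bar\beta_{S/Z(S)} \text{ have shadowing}.
\]
Here the product of two shadowing maps has shadowing, and conversely each factor of a product with shadowing has shadowing. We also use that the dual rational map of $A/D$ agrees with $T_{\mathbb Q}$, since finite-index changes of $\widehat A$ do not change $\widehat A\otimes\mathbb Q$. Theorem~B then identifies condition (1) with shadowing on the abelian part.

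\textbf{Step 3: the semisimple part.} On $P=\prod_{i\in I} S_i$, Lemma~\ref{lem:semisimple-index-map} writes $\bar\beta$ as a family of isomorphisms $\phi_i\colon S_i\to S_{\tau(i)}$, possibly with some factors outside $\tau(I)$ killed appropriately.

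If $\tau$ has no periodic point, $I$ decomposes into $\tau$-orbits of two kinds: bi-infinite chains $\mathbb Z$, and forward chains $\mathbb N$ starting at points outside $\tau(I)$. Using the $\phi_i$ to identify every factor along a chain with a single Lie group $L$, the map becomes a two-sided or one-sided full shift on $L^{\mathbb Z}$ or $L^{\mathbb N}$. Full shifts on compact groups have shadowing, and arbitrary products of maps with shadowing have shadowing in the uniform sense.

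If $\tau$ has a periodic orbit of length $p$, the finite product over that orbit is an invariant direct factor. On it, $\bar\beta^p$ restricts to an automorphism of a compact semisimple Lie group $L$, and the map is conjugate to a permutation twisted by automorphisms. Such an isometric-type automorphism fails shadowing: some power is close to inner, hence its differential has unit-circle spectrum, and one can build a slowly drifting pseudo-orbit along a one-parameter subgroup that no orbit can follow. Finally, shadowing passes to direct factors, so $\bar\beta$ fails shadowing.

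\textbf{Main obstacle.} We expect the main difficulty in Step 2, namely descending shadowing from $K$ to the factors $A$ and $S/Z(S)$. We will handle this by passing to the quotients $K/S$ and $K/A$, together with the lifting of pseudo-orbits through totally disconnected kernels, and we will check carefully that shadowing descends to quotients by closed invariant subgroups.
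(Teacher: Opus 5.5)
Your proposal is correct and is essentially the paper's proof of Theorem~\ref{thm:connected-endo-shad}: you reduce to the stable image by Lemma~\ref{lem:stable-image}, split $K$ through $D=A\cap S$ with Lemmas~\ref{lem:quotient-shadowing} and~\ref{lem:extension-shadowing}, apply Theorem~B on $A$, and use Lemma~\ref{lem:semisimple-index-map} to get full shifts when $\tau$ has no periodic point and, on a periodic orbit, an automorphism of a compact semisimple Lie group that preserves the Killing-form metric and so fails shadowing by Lemma~\ref{lem:iso-no-shad} (that isometry is the real content of your ``isometric-type'' step; powers, inner automorphisms and the spectrum of the differential are not needed). The remaining slips are minor: $\beta(Z(K)_0)\subseteq Z(K)_0$ holds because $\beta$ is surjective, not because $Z(K)_0$ is characteristic; $\widehat A/D^{\perp}\cong\widehat D$ is torsion but need not be finite, though the rationalization is still unchanged; lifting pseudo-orbits through a quotient needs only openness of the quotient map; your $\phi_i$ run opposite to the paper's $\theta_i\colon S_{\tau(i)}\to S_i$, which is the convention under which the factors outside $\tau(I)$ are the ones killed; and the ``small correction'' in the second half of Step~1 is really a backward extension by exact $\beta$-preimages plus a nested-compactness argument, which is exactly what Lemma~\ref{lem:stable-image} supplies.
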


The second condition has a direct dynamical interpretation. Since \(\tau\) is
injective, each component of its functional graph is a finite cycle, a
one-sided chain, or a two-sided chain. The absence of periodic points removes
the cyclic components, and the induced endomorphism on \(S/Z(S)\) becomes a
product of one-sided and two-sided full shifts. For automorphisms, \(\tau\) is
a permutation, so only two-sided shifts remain. Thus Theorem~C extends the
automorphism picture of Aoki and Dateyama and states it without reference to
the hyperspace or the OE-property.

The paper is organized as follows. Section~\ref{pre} recalls the definitions
and basic results concerning shadowing, inverse systems, natural extensions,
compact groups, Pontryagin duality, and the spectral notation used throughout
the paper. Section~\ref{sta} develops the reduction to the stable image.
Section~\ref{thA} proves Theorem~A, apart from the metrizable automorphism
case of Lemma~\ref{lem:aut-component}, whose proof is deferred until the
connected structural criterion has been established. Section~\ref{thBC}
proves Theorems~B and C, records the resulting automorphism criterion, and
then uses Theorem~C to complete the deferred proof. As explained above, the
proof of Theorem~C is independent of both Theorem~A and
Lemma~\ref{lem:aut-component}.

\section{Preliminaries}\label{pre}

Throughout the paper, all topological groups are Hausdorff and all homomorphisms are continuous. The identity element of a group is denoted by $e$.
 For a topological group $G$, we write $G_0$ for its identity component, $Z(G)$ for its centre, and $G'=[G,G]$ for its commutator subgroup.
  We denote by \[ \End(G) := \{\alpha:G\to G:\alpha\text{ is a continuous homomorphism}\} \] the monoid of continuous endomorphisms of $G$, 
  and by \[ \Aut(G) := \{\alpha\in\End(G):\alpha\text{ is a topological automorphism}\} \] the group of bicontinuous automorphisms of $G$.

\subsection{Shadowing}

The shadowing property was originally formulated for continuous
self-maps of compact metric spaces. Let $(X,d)$ be a compact metric
space and let $f:X\to X$ be continuous. Given $\delta>0$, a sequence
$(x_i)_{i\geq0}$ is called a $\delta$-pseudo-orbit if
\[
d\bigl(f(x_i),x_{i+1}\bigr)<\delta
\]
for every $i\geq0$. Given $\varepsilon>0$, the sequence is said to be
$\varepsilon$-shadowed by $x\in X$ if
\[
d\bigl(f^i(x),x_i\bigr)<\varepsilon
\]
for every $i\geq0$. The map $f$ has shadowing if, for every
$\varepsilon>0$, there exists $\delta>0$ such that every
$\delta$-pseudo-orbit is $\varepsilon$-shadowed by an actual orbit;
see, for example, \cite{AH,Pilyugin}.

On a compact space, any two compatible metrics are uniformly
equivalent. Consequently, shadowing does not depend on the particular
compatible metric used in its definition. It is therefore more
naturally regarded as a property of a uniform dynamical system rather
than a genuinely metric property.

Good and Meddaugh extended the definition to arbitrary compact
Hausdorff spaces using finite open covers \cite{GM}. Equivalently, one
may use the unique compatible uniformity of a compact Hausdorff space.
More precisely, if $U$ and $V$ are entourages of a uniform space
$X$, a sequence $(x_i)_{i\geq0}$ is a $V$-pseudo-orbit for
$f:X\to X$ if
\[
\bigl(f(x_i),x_{i+1}\bigr)\in V
\]
for every $i\geq0$, and it is $U$-shadowed by $x\in X$ if
\[
\bigl(f^i(x),x_i\bigr)\in U
\]
for every $i\geq0$. The resulting uniform definition agrees, on
compact spaces, with both the classical metric definition and the
open-cover definition of Good and Meddaugh.

Every topological group $G$ carries a canonical left uniformity. A
base of entourages for this uniformity is given by
\[
E_U^L
=
\{(x,y)\in G\times G:x^{-1}y\in U\},
\]
where $U$ ranges over the identity neighbourhoods of $G$. Throughout
this paper, shadowing for endomorphisms of topological groups is
understood with respect to this left uniformity.

\begin{definition}\label{def:shadowing}
Let $G$ be a topological group and let $\alpha\in\End(G)$.

A finite or infinite sequence $(x_i)$ in $G$ is called a
$V$-\emph{pseudo-orbit} for $\alpha$, where $V$ is an identity
neighbourhood, if
\[
\alpha(x_i)^{-1}x_{i+1}\in V
\]
whenever both $x_i$ and $x_{i+1}$ are defined.

Given an identity neighbourhood $U$, the sequence $(x_i)$ is said to
be $U$-\emph{shadowed} by a point $x\in G$ or, by the orbit of $x$, if
\[
\alpha^i(x)^{-1}x_i\in U
\]
for every relevant index $i$.

\begin{enumerate}
\item The endomorphism $\alpha$ has the \emph{finite shadowing
property} if, for every identity neighbourhood $U$, there exists an
identity neighbourhood $V$ such that every finite $V$-pseudo-orbit is
$U$-shadowed.

\item The endomorphism $\alpha$ has the \emph{shadowing property} if,
for every identity neighbourhood $U$, there exists an identity
neighbourhood $V$ such that every $V$-pseudo-orbit indexed by
$\mathbb N$ is $U$-shadowed.

\item If $\alpha\in\Aut(G)$, then $\alpha$ has the
\emph{two-sided shadowing property} if, for every identity
neighbourhood $U$, there exists an identity neighbourhood $V$ such
that every $V$-pseudo-orbit indexed by $\mathbb Z$ is $U$-shadowed by
a full orbit of $\alpha$.
\end{enumerate}
\end{definition}

Suppose that $G$ is metrizable, and let $d$ be any compatible
left-invariant metric on $G$. The sets
\[
\{(x,y)\in G\times G:d(x,y)<\varepsilon\},
\qquad \varepsilon>0,
\]
form a base for the left uniformity. Hence
Definition~\ref{def:shadowing} is equivalent to the usual metric
definition of shadowing with respect to $d$. In particular, the
property is independent of the choice of a compatible left-invariant
metric.

On compact spaces, finite shadowing and shadowing are equivalent. If
the map is a homeomorphism, these conditions are also equivalent to
two-sided shadowing.\footnote{These equivalences are usually stated
in the literature for compact metric spaces; see, for example,
\cite{AH,GOP,Pilyugin}. The same arguments apply, with metrics
replaced by entourages or finite open covers, to arbitrary compact
Hausdorff spaces. Indeed, compactness upgrades finite shadowing to
shadowing. For a homeomorphism, one applies one-sided shadowing to
successively longer finite portions of a two-sided pseudo-orbit and
then takes a convergent subnet of the corresponding shadowing
points.}
We shall use these equivalences without further comment.

\begin{example}\label{examples}
 We record two basic classes of group endomorphisms with shadowing. 
 \begin{enumerate} 
 \item Let $H$ be a compact metrizable group. The \emph{one-sided full shift} is the continuous surjective endomorphism 
 \[
  \sigma_+:H^{\mathbb N}\to H^{\mathbb N}, \qquad \sigma_+\bigl((x_n)_{n\geq0}\bigr) = (x_{n+1})_{n\geq0}, 
  \] 
 and the \emph{two-sided full shift} is the automorphism 
 \[ 
 \sigma:H^{\mathbb Z}\to H^{\mathbb Z}, \qquad \sigma\bigl((x_n)_{n\in\mathbb Z}\bigr) = (x_{n+1})_{n\in\mathbb Z}.
  \] 
  The map $\sigma_+$ has shadowing, while $\sigma$ has two-sided shadowing. 
  This follows from the standard diagonal tracing argument for full shifts: a sufficiently accurate pseudo-orbit is shadowed by the point obtained by reading its zeroth coordinate along the time direction. See, for example, \cite{AH,Pilyugin} for the standard shadowing argument in symbolic dynamics. 

\item Let $G$ be a compact totally disconnected group and let
$\alpha\in\End(G)$. Then $\alpha$ has shadowing.

Indeed, for every open normal subgroup $U$ of $G$, put
\[
N_U:=\bigcap_{n\geq0}\alpha^{-n}(U).
\]
Then $N_U$ is a closed normal subgroup of $G$ such that
$\alpha(N_U)\subseteq N_U$. Hence $\alpha$ induces an endomorphism
$\alpha_U$ on $G/N_U$. The orbit map
\[
\Phi_U:G/N_U\longrightarrow (G/U)^{\mathbb N},
\qquad
\Phi_U(xN_U)
=
\bigl(\alpha^n(x)U\bigr)_{n\geq0},
\]
is a topological group embedding and satisfies
\[
\Phi_U\circ\alpha_U=\sigma_+\circ\Phi_U,
\]
where $\sigma_+$ denotes the one-sided shift. Consequently,
$(G/N_U,\alpha_U)$ is conjugate to a one-sided group shift over the
finite group alphabet $G/U$.

Every one-dimensional group shift over a finite group alphabet is a
shift of finite type; see \cite{KitchensSchmidt,Schmidt}. The same finite-memory
argument applies to one-sided group shifts. It follows that
$\alpha_U$ has shadowing.

If $V\subseteq U$ are open normal subgroups, then
$N_V\subseteq N_U$, and the canonical bonding homomorphism
\[
G/N_V\longrightarrow G/N_U
\]
is surjective and intertwines $\alpha_V$ and $\alpha_U$. Moreover,
since $N_U\subseteq U$ for every $U$, we have
\[
\bigcap_U N_U=\{e\}.
\]
It follows that
\[
(G,\alpha)
\cong
\varprojlim_U (G/N_U,\alpha_U).
\]
Therefore, Theorem~\ref{thm:inverse-limit-shadowing} below implies that
$\alpha$ has shadowing.

More generally, in a separate work it is proved that every
continuous endomorphism of a totally disconnected locally compact
group has shadowing; see \cite{PengLC}.
   \end{enumerate} 
  \end{example}

\subsection{Inverse systems and natural extensions}

Let $(I,\leq)$ be a directed set. An \emph{inverse system of compact dynamical
systems} consists of systems $(X_i,f_i)$ and factor maps
$p_i^j:X_j\to X_i$ for $i\leq j$, satisfying
$p_i^i=\id_{X_i}$ and $p_i^k=p_i^j\circ p_j^k$. Its inverse limit is
\[
X=\varprojlim_{i\in I}X_i
=
\left\{(x_i)\in\prod_{i\in I}X_i:
 p_i^j(x_j)=x_i\text{ whenever }i\leq j\right\},
\]
with the coordinatewise map $f((x_i))=(f_i(x_i))$.

The inverse system satisfies the \emph{Mittag--Leffler condition} if, for every
$i\in I$, there is $j\geq i$ such that
$p_i^k(X_k)=p_i^j(X_j)$ for every $k\geq j$. In particular, every inverse
system with surjective bonding maps satisfies the Mittag--Leffler condition.
We shall use the fact that an inverse limit of shadowing systems satisfying the
Mittag--Leffler condition has shadowing; see \cite[Theorem~8]{GM} and the more
general uniform-space formulation in \cite{Bernardes}.

\begin{theorem}[Inverse-limit Shadowing Theorem]\label{thm:inverse-limit-shadowing}
 Let $(I,\leq)$ be a directed set, and let \[ \mathbf X = \bigl\{(X_i,f_i),p_i^j:i\leq j\text{ in }I\bigr\} \] be an inverse system of compact dynamical systems satisfying the Mittag--Leffler condition. 
 Suppose that $f_i$ has shadowing for every $i\in I$. Then the induced map \[ f:\varprojlim_{i\in I}X_i\longrightarrow \varprojlim_{i\in I}X_i \] has shadowing. 
 \end{theorem}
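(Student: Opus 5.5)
The plan is to work with a compatible uniformity and finite-shadowing, rather than metrics. By the equivalence recalled earlier, on a compact space finite shadowing and shadowing coincide. So it suffices to show that for every entourage $U$ of $X=\varprojlim X_i$ there is an entourage $V$ such that every finite $V$-pseudo-orbit of $f$ is $U$-shadowed.

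\textbf{Step 1 (reduction to one coordinate).} The inverse-limit uniformity is generated by the pullbacks $(p_i\times p_i)^{-1}(U_i)$, where $p_i:X\to X_i$ is the projection and $U_i$ is an entourage of $X_i$. Since $I$ is directed, a single such pullback lies inside any given $U$; after shrinking, we may assume $U=(p_i\times p_i)^{-1}(U_i)$ for one index $i$.

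\textbf{Step 2 (using Mittag--Leffler).} Take $j\geq i$ as in the Mittag--Leffler condition, so that $Y:=p_i^j(X_j)=p_i^k(X_k)$ for all $k\geq j$. A standard compactness argument then gives $p_i(X)=Y$: the sets $(p_i^k)^{-1}(y)$ for $k\ge j$ are nonempty, and one takes the inverse limit of the nonempty compact fibres, using Mittag--Leffler again at each level to keep the system of images stable. Now use uniform continuity of $p_i^j$: pick $U_j$ so that $(p_i^j\times p_i^j)(U_j)\subseteq U_i'$, where $U_i'\circ U_i'\subseteq U_i$. Apply shadowing of $f_j$ to $U_j$ to obtain $V_j$, and set $V=(p_j\times p_j)^{-1}(V_j)$.

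\textbf{Step 3 (shadowing a finite pseudo-orbit).} Let $(x^0,\dots,x^n)$ be a finite $V$-pseudo-orbit in $X$. Its projection $(p_j(x^m))_m$ is a $V_j$-pseudo-orbit for $f_j$, so some $z\in X_j$ satisfies $(f_j^m z, p_j(x^m))\in U_j$ for all $m\le n$. Projecting to $X_i$, the point $y=p_i^j(z)\in Y$ satisfies $(f_i^m y,p_i(x^m))\in U_i'$. Since $y\in Y=p_i(X)$, we can choose $x\in X$ with $p_i(x)=y$. Because $p_i$ is equivariant, $p_i(f^m x)=f_i^m y$, and hence $(f^m x,x^m)\in U$ for all $m\le n$. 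This proves finite shadowing, and therefore shadowing.

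\textbf{Main obstacle.} The delicate point is Step 2: the shadowing point $z$ produced in $X_j$ need not lift to the limit $X$. Only its image in $X_i$ is guaranteed to lift, and this holds precisely because of the Mittag--Leffler stabilization together with the compactness argument showing $p_i(X)=Y$. This is also the reason for working at level $j$ but measuring at level $i$.
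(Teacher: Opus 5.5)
The paper gives no proof of its own here; it quotes Good and Meddaugh [GM, Theorem~8], with Bernardes for the uniform-space version. Your argument is correct and is essentially the standard proof behind that citation: shadow at the Mittag--Leffler level $j$, push the shadowing point down to level $i$, and lift it to the limit via $p_i(X)=p_i^j(X_j)$.

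Two points are cosmetic. First, Mittag--Leffler is needed only to make the fibres $(p_i^k)^{-1}(y)$, $k\geq j$, nonempty; nonemptiness of their inverse limit follows from compactness alone. Second, in Step~3 the inclusion $U_i'\subseteq U_i$ is all you use, so the condition $U_i'\circ U_i'\subseteq U_i$ is never needed.
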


Let $f:X\to X$ be a continuous self-map of a compact space. The
\emph{natural extension} of $(X,f)$ is the inverse limit of the
inverse sequence
\[
X
\xleftarrow{\,f\,}
X
\xleftarrow{\,f\,}
X
\xleftarrow{\,f\,}
\cdots.
\]
Thus
\[
\widetilde X
=
\varprojlim(X,f)
=
\left\{
(x_0,x_1,\ldots)\in X^{\mathbb N}:
f(x_{n+1})=x_n\text{ for every }n\geq0
\right\}.
\]
It is equipped with the homeomorphism
\[
\widetilde f:\widetilde X\to\widetilde X,
\qquad
\widetilde f(x_0,x_1,\ldots)
=
\bigl(f(x_0),x_0,x_1,\ldots\bigr),
\]
whose inverse is the left shift
\[
\widetilde f^{-1}(x_0,x_1,\ldots)
=
(x_1,x_2,\ldots).
\]
The coordinate projection
\[
\pi_0:\widetilde X\to X,
\qquad
\pi_0(x_0,x_1,\ldots)=x_0,
\]
satisfies
\[
\pi_0\circ\widetilde f=f\circ\pi_0.
\]

In general, $\pi_0$ need not be surjective. Its image is precisely the
stable image
\[
X_\infty:=\bigcap_{n\geq0}f^n(X).
\]
The restriction $f|_{X_\infty}:X_\infty\to X_\infty$ is surjective,
and $\widetilde X$ may equivalently be viewed as the natural extension
of this restriction. In particular, if $f$ is surjective, then
$X_\infty=X$ and $\pi_0$ is surjective.

For compact metric spaces, Chen and Li proved that shadowing passes
from $f$ to its natural extension: if $f$ has shadowing, then
$\widetilde f$ has shadowing; see \cite[Theorem~1.3]{CL}. We shall use
the corresponding uniform version for compact spaces, and in
particular for compact groups equipped with their compatible
uniformities.

\subsection{Compact groups and Pontryagin duality}

We use standard facts about compact groups from \cite[Chapter 9]{HM}. By a theorem of Goto, $G'$ is connected and closed whenever $G$ is a connected compact group. Moreover, $G$ is called
\emph{semisimple} if $G'=G$. Since $(G')'=G'$ in connected compact groups, $G'$ is always semisimple. The Mal'cev-Levi decomposition theorem gives 
\[
G=Z(G)_0G'.
\]
Moreover, $Z(G)_0\cap G'$ is totally disconnected and central. If $S$ is seimisimple, connected and compact, then $Z(S)$ is totally disconnected and
\[
S/Z(S)\cong\prod_{i\in I}S_i,
\]
where each $S_i$ is a centre-free simple connected compact Lie group.

Let $A$ be a compact abelian group. Its Pontryagin dual is the discrete abelian
group
\[
\widehat A
=
\Hom_{\mathrm{cont}}(A,\mathbb T)
\]
of continuous homomorphisms from $A$ to $\mathbb T$, where $\mathbb T$ is the circle group. Pontryagin duality is a contravariant
equivalence between compact abelian groups and discrete abelian groups; see
\cite{Morris}. In particular, a continuous surjective homomorphism of compact
abelian groups dualizes to an injective homomorphism of discrete groups, and a
short exact sequence of compact abelian groups dualizes to a short exact
sequence in the opposite direction. The group $A$ is connected if and only if
$\widehat A$ is torsion-free, and $A$ is finite-dimensional if and only if
$\widehat A$ has finite free rank.

If $\beta:A\to A$ is an endomorphism, its dual endomorphism is
$T=\widehat\beta:\widehat A\to\widehat A$, defined by
$T(\chi)=\chi\circ\beta$. When $A$ is connected, put
\[
V=\widehat A\otimes_{\mathbb Z}\mathbb Q.
\]
The tensor product is the \emph{rationalization} (also known as the \emph{divisible hull}) of the torsion-free group
$\widehat A$, and
$\dim_{\mathbb Q}V=\operatorname{rank}(\widehat A)$. The map $T$ extends
uniquely to the $\mathbb Q$-linear endomorphism
$T_{\mathbb Q}=T\otimes\id_{\mathbb Q}$ of $V$.

We regard $V$ as a module over the polynomial ring $\mathbb Q[u]$ by setting
$u\cdot v=T_{\mathbb Q}v$. Its algebraic part is
\[
V_{\mathrm{alg}}
=
\{v\in V:p(T_{\mathbb Q})v=0
\text{ for some }0\neq p\in\mathbb Q[u]\}.
\]
For automorphisms one may equivalently work over the Laurent polynomial ring
$\mathbb Q[u,u^{-1}]$. This module-theoretic viewpoint is standard in
algebraic dynamics; see \cite{KitchensSchmidt,Schmidt}.

\subsection{Spectrum and hyperbolicity}

Let $W$ be a finite-dimensional vector space over $\mathbb Q$, and let
$L:W\to W$ be linear. We write $\operatorname{Spec}(L)$ for the spectrum of
the complexification $L_{\mathbb C}$. The map $L$ is called
\emph{hyperbolic} if
\[
\operatorname{Spec}(L)\cap\mathbb S^1=\varnothing,
\qquad
\mathbb S^1=\{z\in\mathbb C:|z|=1\}.
\]
For an endomorphism, zero is allowed as an eigenvalue. When $L$ is invertible,
this is the usual hyperbolicity condition for a linear automorphism.

\section{Stable images and shadowing}\label{sta}

Let $G$ be a compact group and let $\alpha:G\to G$ be an endomorphism.
Define the \emph{stable image} of $\alpha$ by
\[
G_\infty:=\bigcap_{m\geq 0}\alpha^m(G).
\]

\begin{lemma}\label{lem:stable-image-surjective}
The subgroup $G_\infty$ is compact and $\alpha$-invariant, and
$
\alpha(G_\infty)=G_\infty.
$
\end{lemma}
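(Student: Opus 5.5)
The plan is to prove the three assertions in order: compactness, invariance, and surjectivity of $\alpha$ on $G_\infty$. Only the last needs a compactness argument.

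\emph{Compactness and invariance.} Each $\alpha^m(G)$ is a continuous image of the compact group $G$ under a homomorphism. Hence it is a compact, and therefore closed, subgroup of the Hausdorff group $G$. The sets $\alpha^m(G)$ decrease in $m$, because $\alpha^{m+1}(G)=\alpha^m(\alpha(G))\subseteq\alpha^m(G)$. It follows that $G_\infty$ is an intersection of closed subgroups, hence a closed subgroup of $G$, and so compact. For invariance, note that
\[
\alpha(G_\infty)\subseteq\bigcap_{m\geq0}\alpha^{m+1}(G)=\bigcap_{m\geq1}\alpha^m(G)=G_\infty ,
\]
where the last equality uses the monotonicity just observed.

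\emph{Surjectivity.} It remains to prove $G_\infty\subseteq\alpha(G_\infty)$. Fix $y\in G_\infty$ and, for each $m\geq0$, set
\[
F_m:=\alpha^{-1}(y)\cap\alpha^m(G).
\]
Each $F_m$ is closed, since $\alpha^{-1}(y)$ is closed by continuity and $\alpha^m(G)$ is compact. The sets $F_m$ decrease in $m$. Each $F_m$ is nonempty: since $y\in\alpha^{m+1}(G)$, we can write $y=\alpha^{m+1}(g)$, and then $x=\alpha^m(g)$ lies in $\alpha^m(G)$ and satisfies $\alpha(x)=y$. By compactness of $G$ and the finite intersection property, $\bigcap_m F_m\neq\varnothing$. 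Any $x$ in this intersection lies in $G_\infty$ and satisfies $\alpha(x)=y$, which gives $y\in\alpha(G_\infty)$.

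The only step with real content is this nonemptiness-of-the-intersection argument. Everything else is routine closedness and monotonicity. An alternative for the final step is to note that $\pi_0(\widetilde G)=G_\infty$, as recalled in the preliminaries, but the direct compactness argument is self-contained.
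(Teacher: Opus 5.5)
Your proof is correct and follows essentially the same route as the paper: the sets $F_m=\alpha^{-1}(y)\cap\alpha^m(G)$ are the paper's sets $C_m$, and both arguments conclude by the finite intersection property for a decreasing family of nonempty compact sets. You also spell out the compactness and invariance claims that the paper dismisses as clear, and those details are fine.
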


\begin{proof}
It is clear that $G_\infty$ is a compact subgroup and that
$
\alpha(G_\infty)\subseteq G_\infty.
$

Let $x\in G_\infty$. For each $m\geq 0$, put
$
C_m:=\{y\in\alpha^m(G):\alpha(y)=x\}.
$
Since $x\in\alpha^{m+1}(G)$, the set $C_m$ is nonempty. Moreover, the
sets $C_m$ are compact and decreasing. Hence
$
\bigcap_{m\geq 0}C_m\neq\varnothing.
$
If $y$ belongs to this intersection, then $y\in G_\infty$ and
$\alpha(y)=x$. Thus $\alpha(G_\infty)=G_\infty$.
\end{proof}

We shall also need the relation between the stable image and the
identity component.

\begin{lemma}\label{lem:stable-component}
Let $G_\infty$ be the stable image of $\alpha$. Then
\[
(G_\infty)_0
=
\bigcap_{m\geq 0}\alpha^m(G_0).
\]
In particular, the right-hand side is the stable image of
$\alpha|_{G_0}$.
\end{lemma}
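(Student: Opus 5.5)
Set $H:=\bigcap_{m\geq0}\alpha^m(G_0)$. I will prove $(G_\infty)_0=H$ by two inclusions. The tools are: images of connected sets are connected; decreasing intersections of compact connected sets are connected; and continuous images of compact sets commute with decreasing intersections of compact sets, as in the proof of Lemma~\ref{lem:stable-image-surjective}.

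\emph{Step 1: $H\subseteq (G_\infty)_0$.} Each $\alpha^m(G_0)$ is a compact connected subgroup. The sequence is decreasing, because $\alpha(G_0)\subseteq G_0$ (it is connected and contains $e$) gives $\alpha^{m+1}(G_0)\subseteq\alpha^m(G_0)$. A decreasing intersection of compact connected sets in a Hausdorff space is connected, so $H$ is a compact connected subgroup. Since $\alpha^m(G_0)\subseteq\alpha^m(G)$ for every $m$, we have $H\subseteq G_\infty$. As $H$ is connected and contains $e$, it lies in $(G_\infty)_0$.

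\emph{Step 2: $(G_\infty)_0\subseteq H$.} This is the main obstacle, since a priori $(G_\infty)_0$ need not lie in the images of $G_0$. Let $C:=(G_\infty)_0$. By Lemma~\ref{lem:stable-image-surjective}, $\alpha(G_\infty)=G_\infty$, so $\alpha(C)$ is a connected subgroup of $G_\infty$ and hence $\alpha(C)\subseteq C$. I must show $C\subseteq\alpha^m(G_0)$ for each $m$, and I would do this with component groups. Since $G_\infty$ is compact, $G_\infty/C$ is totally disconnected.

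The homomorphism $\alpha^m|_{G_\infty}:G_\infty\to G_\infty$ is surjective. Hence $\alpha^m(C)$ is a normal subgroup of $G_\infty$ (the image of a normal subgroup under a surjection). It is compact and connected, and it has totally disconnected quotient, since
\[
G_\infty/\alpha^m(C)
\]
is a continuous image of $G_\infty/C$. A continuous surjective homomorphism of compact groups is a quotient map, and a Hausdorff quotient of a compact totally disconnected group by a closed normal subgroup is again totally disconnected. Now $\alpha^m(C)\subseteq C$ is connected, and $G_\infty/\alpha^m(C)$ is totally disconnected, which forces $C\subseteq\alpha^m(C)$. Indeed, the image of $C$ in that quotient is connected, hence trivial. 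Therefore $\alpha^m(C)=C$ for every $m$.

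Finally, $C\subseteq G_0$ because $C$ is connected and contains $e$. Hence $C=\alpha^m(C)\subseteq\alpha^m(G_0)$ for all $m$, so $C\subseteq H$. Combining the two steps gives $(G_\infty)_0=H$.

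For the ``in particular'' clause, note that $(\alpha|_{G_0})^m(G_0)=\alpha^m(G_0)$. So $H$ is by definition the stable image of $\alpha|_{G_0}$, and the statement follows.

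The delicate point to double-check is the claim that quotients of compact totally disconnected groups remain totally disconnected. I would justify it via the standard fact that a compact group is totally disconnected if and only if it is profinite. Quotients of profinite groups by closed normal subgroups are profinite, because the images of open normal subgroups form a neighbourhood base of the identity that separates points.
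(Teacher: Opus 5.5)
Your proof is correct and follows essentially the paper's approach. Step~1 is identical, and Step~2 rests on the same key fact, namely that a surjective homomorphism of compact groups maps identity component onto identity component, via total disconnectedness of the quotient. The only difference is the surjection it is applied to. The paper uses $\alpha^m:G\to\alpha^m(G)$ to get $(G_\infty)_0\subseteq(\alpha^m(G))_0=\alpha^m(G_0)$, which does not need Lemma~\ref{lem:stable-image-surjective}; you use $\alpha^m|_{G_\infty}$, which does, and you get the by-product $\alpha^m((G_\infty)_0)=(G_\infty)_0$.
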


\begin{proof}
We first recall that if $\varphi:G\to H$ is a continuous surjective
homomorphism of compact groups, then
\[
\varphi(G_0)=H_0.
\]
Indeed, $\varphi(G_0)$ is connected, so
$
\varphi(G_0)\subseteq H_0.
$
On the other hand, the quotient $H/\varphi(G_0)$ is a continuous
homomorphic image of the totally disconnected compact group $G/G_0$.
It is therefore totally disconnected. The image of $H_0$ in this
quotient is connected, and hence trivial. Thus
$
H_0\subseteq\varphi(G_0).
$

Applying this observation to the surjective homomorphism
$
\alpha^m:G\to\alpha^m(G)
$
gives
\[
\bigl(\alpha^m(G)\bigr)_0=\alpha^m(G_0).
\]

Put
$
L:=\bigcap_{m\geq 0}\alpha^m(G_0).
$
The groups $\alpha^m(G_0)$ form a decreasing family of nonempty compact
connected groups. Hence $L$ is connected. Since $L\subseteq G_\infty$,
we have
$
L\subseteq(G_\infty)_0.
$

Conversely,
$
(G_\infty)_0
\subseteq\bigl(\alpha^m(G)\bigr)_0
=\alpha^m(G_0)
$
for every $m$. Hence
$
(G_\infty)_0\subseteq L.
$
This proves the equality.
\end{proof}

\begin{lemma}\label{lem:stable-image}
	Let $G$ be a compact group and let $\alpha\in\End(G)$. Put
	$\alpha_\infty:=\alpha|_{G_\infty}$. Then $\alpha$ has finite
	shadowing if and only if $\alpha_\infty$ has finite shadowing.
\end{lemma}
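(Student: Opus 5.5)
The plan is to compare pseudo-orbits of $\alpha$ and of $\alpha_\infty$ using three standard facts about compact groups. First, the identity has a base of neighbourhoods $W$ that are symmetric, invariant under conjugation and closed, so left and right errors can be handled alike. Second, $\alpha$ and each iterate $\alpha^k$ are uniformly continuous. Third, by Lemma~\ref{lem:stable-image-surjective} and compactness, the decreasing compact sets $\alpha^m(G)$ have intersection $G_\infty$. Hence for every identity neighbourhood $W$ there is an $N$ with $\alpha^N(G)\subseteq G_\infty W$. Throughout we use the uniform continuity of the fixed iterates to accumulate errors: if $(x_i)$ is a $\delta$-pseudo-orbit with $\delta$ small enough relative to $N$, then $\alpha^k(x_i)^{-1}x_{i+k}\in W$ for all $0\le k\le N$.

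\emph{($\alpha$ has finite shadowing $\Rightarrow$ $\alpha_\infty$ has).} Fix a closed identity neighbourhood $U$ and choose $V$ for $\alpha$ and $U$. Let $(y_0,\dots,y_n)$ be a finite $(V\cap G_\infty)$-pseudo-orbit in $G_\infty$. Fix $m\ge0$. Since $\alpha(G_\infty)=G_\infty$, we can choose exact preimages $y_{-m},\dots,y_{-1}\in G_\infty$ with $\alpha(y_{-j-1})=y_{-j}$. The sequence $(y_{-m},\dots,y_n)$ is then still a $V$-pseudo-orbit. Let $x^{(m)}\in G$ $U$-shadow it, and put $z_m:=\alpha^m(x^{(m)})\in\alpha^m(G)$. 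Then $z_m$ $U$-shadows $(y_0,\dots,y_n)$. Let $z$ be a cluster point of $(z_m)$. For each $k$ we have $z_m\in\alpha^k(G)$ once $m\ge k$, and these sets are closed, so $z\in G_\infty$. Since $U$ is closed and only finitely many indices are involved, $z$ still $U$-shadows $(y_i)$.

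\emph{($\alpha_\infty$ has finite shadowing $\Rightarrow$ $\alpha$ has).} This is the harder direction, because an arbitrary pseudo-orbit starts outside $G_\infty$.
\begin{enumerate}
\item Given $U$, choose a small invariant $W$ with $W^6\subseteq U$, and take the corresponding $N$ with $\alpha^N(G)\subseteq G_\infty W$.
\item Choose $V'$ so that $V'\cap G_\infty$ works for $\alpha_\infty$ and $W$. Choose $\delta$ small enough that the $N$-step accumulation stays inside $W$.
\item Let $(x_0,\dots,x_n)$ be a $\delta$-pseudo-orbit. We may assume $n>N$; otherwise $x=x_0$ works. For $i\ge N$ we have $x_i\in\alpha^N(x_{i-N})W\subseteq G_\infty W^2$. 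So pick $y_i\in G_\infty$ with $y_i^{-1}x_i\in W^2$.
\item By continuity of $\alpha$ and conjugation invariance, $(y_N,\dots,y_n)$ is a pseudo-orbit for $\alpha_\infty$ with errors in a neighbourhood we can make lie in $V'$. The price is shrinking $W$ and $\delta$ once more, which is possible because $W$ is chosen before $V'$ only through $\alpha$'s modulus of continuity.
\item Let $z\in G_\infty$ shadow $(y_i)_{i\ge N}$.
\end{enumerate}

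It remains to correct the initial segment, which is the main obstacle. Here we use that $\alpha^N:G\to\alpha^N(G)$ is a surjective homomorphism of compact groups, hence open. Therefore for every identity neighbourhood $W'$ there is $W''$ with $\alpha^N(G)\cap W''\subseteq\alpha^N(W')$. Choose $W'$ so small that $\alpha^i(W')\subseteq W$ for all $i\le N$, and take $W''$ as above. The element $g:=\alpha^N(x_0)^{-1}z$ lies in $\alpha^N(G)$ and is close to the identity, since
\[
\alpha^N(x_0)\approx x_N\approx y_N\approx z .
\]
Arranging the constants so that $g\in W''$, we can write $g=\alpha^N(w)$ with $w\in W'$, and put $x:=x_0w$. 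For $i\le N$ we get $\alpha^i(x)=\alpha^i(x_0)\alpha^i(w)$, which is $W^2$-close to $x_i$. For $i\ge N$ we get $\alpha^i(x)=\alpha^{i-N}(z)$, which is $W^3$-close to $x_i$. Hence $x$ $U$-shadows $(x_i)$.

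The only genuine bookkeeping is the order in which the neighbourhoods are chosen. We take $U$, then $W$ and $N$, then $V'$, then $W'$ and $W''$, and finally $\delta$. Each later choice depends only on earlier ones and on the uniform continuity of the finitely many maps $\alpha^k$ with $k\le N$.
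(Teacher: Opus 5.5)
Your necessity direction is correct and is essentially the paper's argument: extend backwards by exact preimages in $G_\infty$, shadow, push forward by $\alpha^m$, and take a cluster point, which lies in $\bigcap_m\alpha^m(G)=G_\infty$.

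The sufficiency direction has a genuine gap, and it is not only a matter of ordering the neighbourhoods. Two problems arise.

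\emph{Step~4.} The approximations $y_i$ must be fine relative to $V'$. You cannot get this by ``shrinking $W$ once more'', because $W$ enters $V'$ as the precision demanded of $\alpha_\infty$, not only through the modulus of continuity of $\alpha$. This is repairable: use a second neighbourhood $W_1$ chosen after $V'$, and choose $N$ from $W_1$. But then $N$ depends on the precision $W$. Whenever $\alpha^N(G)\neq G_\infty$ for all $N$, this forces $N\to\infty$ as $W$ shrinks.

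\emph{The initial-segment correction.} This problem is fatal. You only know $g=\alpha^N(x_0)^{-1}z\in W_1^3W$, because the factor $y_N^{-1}z$ is controlled only by the shadowing precision $W$. That precision is fixed before $V'$, hence before $N$ and before $W''$. But $W''$ genuinely depends on $N$ and shrinks as $N$ grows. For instance, suppose $(G,\alpha)$ has a hyperbolic toral automorphism $A$ with expansion rate $\lambda>1$ as a factor. Then the elements $A^N(w)$ with $A^i(w)$ small for all $0\le i\le N$ form a strip of width of order $\lambda^{-N}$ in the stable direction. So ``arranging the constants so that $g\in W''$'' requires $W\subseteq W''(N)$, where $N$ depends on $W$ through $V'$. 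Nothing in your construction breaks this circle. Your stated order, with $N$ chosen before $V'$, already fails at step~4.

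The missing idea is to shadow the \emph{deviation} from the orbit of $x_0$ rather than the pseudo-orbit itself. This removes any need for lifting through $\alpha^N$.
\begin{enumerate}
\item With $e_i=\alpha(x_i)^{-1}x_{i+1}$ and $d_i=\alpha^i(x_0)^{-1}x_i$, one has $d_0=e$ and $d_{i+1}=\alpha(d_i)e_i$. Hence $d_i=\alpha^{i-1}(e_0)\cdots\alpha(e_{i-2})e_{i-1}$.
\item Splitting off the last $m$ factors gives $d_i=a_ib_i$ with $a_i\in\alpha^m(G)$ and $b_i$ small. Moreover $a_{i+1}=\alpha(a_i)\alpha^m(e_{i-m})$ for $i\ge m$.
\item Write $a_i=k_iq_i$ with $k_i\in G_\infty$ and $q_i$ small, taking $k_i=e$ for $i\le m$. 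Then $(k_i)$ is a pseudo-orbit of $\alpha_\infty$.
\item If $c\in G_\infty$ shadows $(k_i)$, then $x_0c$ shadows $(x_i)$.
\end{enumerate}
Because $(k_i)$ begins with $m+1$ copies of $e$, the shadowing point $c$ automatically has small iterates along the initial segment. This is exactly the control your openness argument cannot supply. Here $m$ may depend on the precision without harm, since only the pseudo-orbit neighbourhood $V$, chosen last, depends on $m$.
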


\begin{proof}
	We first prove the necessity. Let $U$ be an identity neighbourhood in
	$G_\infty$.  
	Choose an open identity neighbourhood $U_0$ in $G$ such that
	$\overline{U_0}\cap G_\infty\subseteq U$, and let $V_0$ witness finite
	$U_0$-shadowing for $\alpha$. Put $V:=V_0\cap G_\infty$.
	
	Let $x_0,x_1,\ldots,x_n$ be a finite $V$-pseudo-orbit in
	$G_\infty$. For every $m\geq1$, the surjectivity of $\alpha_\infty$
	allows us to choose
	\[
	x_{-m},x_{-m+1},\ldots,x_{-1}\in G_\infty
	\]
	such that $\alpha(x_i)=x_{i+1}$ for $-m\leq i<0$. Thus
	\[
	x_{-m},x_{-m+1},\ldots,x_{-1},x_0,\ldots,x_n
	\]
	is a finite $V_0$-pseudo-orbit in $G$. Choose $y_m\in G$ which
	$U_0$-shadows this pseudo-orbit, and put $z_m:=\alpha^m(y_m)$. Then
	$z_m\in\alpha^m(G)$ and
	$\alpha^i(z_m)^{-1}x_i\in U_0$
	for every $0\leq i\leq n$.
	
	For $m\geq1$, define
	\[
	F_m
	:=
	\left\{
	z\in\alpha^m(G):
	\alpha^i(z)^{-1}x_i\in\overline{U_0}
	\text{ for every }0\leq i\leq n
	\right\}.
	\]
	Each $F_m$ is nonempty and compact, and the family $(F_m)_{m\geq1}$
	is decreasing. Hence
	$\bigcap_{m\geq1}F_m\neq\varnothing.$
	Choose $z$ in this intersection. Then $z\in G_\infty$, and
	$
	\alpha^i(z)^{-1}x_i\in\overline{U_0}\subseteq C
	$
	for every $0\leq i\leq n$. Since both $\alpha^i(z)$ and $x_i$
	belong to $G_\infty$, we obtain
	\[
	\alpha^i(z)^{-1}x_i\in C\cap G_\infty\subseteq U.
	\]
	Thus $\alpha_\infty$ has finite shadowing.
	
	Conversely, suppose that $\alpha_\infty$ has finite shadowing. Let
	$U$ be an identity neighbourhood in $G$. Choose an open identity
	neighbourhood $C$ such that $CC\subseteq U$. There is an identity
	neighbourhood $D_\infty$ in $G_\infty$ such that every finite
	$D_\infty$-pseudo-orbit of $\alpha_\infty$ is
	$(C\cap G_\infty)$-shadowed.
	
	Choose an open identity neighbourhood $D$ in $G$ such that
	\[
	D\subseteq C
	\qquad\text{and}\qquad
	D\cap G_\infty\subseteq D_\infty.
	\]
	By continuity, choose a symmetric identity neighbourhood $B$ in $G$
	such that
	\[
	\alpha(B)BB\subseteq D.
	\]
	In particular, $B\subseteq D$ and $BB\subseteq D$.
	
	For $m\geq0$, put $G^{(m)}:=\alpha^m(G)$. We claim that there is
	$m\geq1$ such that
	\[
	G^{(m)}\subseteq G_\infty B.
	\]
	Otherwise, every compact set $G^{(m)}\setminus G_\infty B$	would be nonempty. These sets form a decreasing family, so their
	intersection would be nonempty. However,
	\[
	\bigcap_{m\geq0}
	\left(G_m\setminus G_\infty B\right)
	=
	G_\infty\setminus G_\infty B
	=
	\varnothing,
	\]
	a contradiction.
	
	Fix such an $m$. Choose an identity neighbourhood $V$ in $G$ such
	that
	\[
	\alpha^m(V)\alpha^{m-1}(V)\cdots\alpha(V)V\subseteq B.
	\]
	
	Let $x_0,x_1,\ldots,x_n$ be a finite $V$-pseudo-orbit for $\alpha$. Set
\[
e_i:=\alpha(x_i)^{-1}x_{i+1}\in V
\qquad
\text{and}
\qquad
d_i:=\alpha^i(x_0)^{-1}x_i
\qquad(0\leq i\leq n).
\]
Then $d_0=e$. Moreover, since
$x_{i+1}=\alpha(x_i)e_i$, we have
\[
\begin{aligned}
	d_{i+1}
	&=\alpha^{i+1}(x_0)^{-1}x_{i+1}\\
	&=\alpha^{i+1}(x_0)^{-1}\alpha(x_i)e_i\\
	&=\alpha\bigl(\alpha^i(x_0)^{-1}x_i\bigr)e_i\\
	&=\alpha(d_i)e_i.
\end{aligned}
\]
In particular,
\[
d_1=e_0,\qquad
d_2=\alpha(e_0)e_1,\qquad
d_3=\alpha^2(e_0)\alpha(e_1)e_2.
\]
Therefore, by induction, for every $1\leq i\leq n$,
\[
d_i
=
\alpha^{i-1}(e_0)\alpha^{i-2}(e_1)\cdots
\alpha(e_{i-2})e_{i-1}.
\]	
	
	For $0\leq i\leq m$, put $a_i=e$ and $b_i=d_i$. For $i>m$, put
	\[
	a_i
	:=
	\alpha^{i-1}(e_0)\cdots\alpha^m(e_{i-m-1})
	\qquad
	\text{and}
	\qquad
	b_i
	:=
	\alpha^{m-1}(e_{i-m})\cdots\alpha(e_{i-2})e_{i-1}.
	\]
	Then $d_i=a_ib_i$, with $a_i\in G^{(m)}$ and $b_i\in B$ for every $i>m$.
	
	For $0\leq i\leq m$, put $k_i=q_i=e$. If $i>m$, choose
	$k_i\in G_\infty$ and $q_i\in B$ such that $a_i=k_iq_i$,
	which is possible because $a_i\in G^{(m)}\subseteq G_\infty B$.
	
	We claim that $k_0,k_1,\ldots,k_n$ is a finite
	$D_\infty$-pseudo-orbit for $\alpha_\infty$. This is immediate for
	$i<m$. If $m\leq i<n$, then $a_{i+1}=\alpha(a_i)\alpha^m(e_{i-m})$.
	Using $a_i=k_iq_i$ and $a_{i+1}=k_{i+1}q_{i+1}$, we obtain
	\[
	\alpha(k_i)^{-1}k_{i+1}
	=
	\alpha(q_i)\alpha^m(e_{i-m})q_{i+1}^{-1}.
	\]
	The right-hand side belongs to $\alpha(B)BB\subseteq D$. Since the
	left-hand side belongs to $G_\infty$, it follows that
	\[
	\alpha(k_i)^{-1}k_{i+1}
	\in D\cap G_\infty
	\subseteq D_\infty.
	\]
	This proves the claim.
	
	Choose $c\in G_\infty$ which $(C\cap G_\infty)$-shadows this
	pseudo-orbit, so that
	\[
	\alpha^i(c)^{-1}k_i\in C\cap G_\infty
	\]
	for every $0\leq i\leq n$. Put $y:=x_0c$.
	
	If $i<m$, then $k_i=e$ and $d_i=b_i\in B$. Hence
	\[
	\alpha^i(y)^{-1}x_i
	=
	\alpha^i(c)^{-1}d_i
	\in CB
	\subseteq CD
	\subseteq CC
	\subseteq U.
	\]
	If $i\geq m$, then $d_i=k_iq_ib_i$, where $q_i,b_i\in B$. Therefore
	\[
	\alpha^i(y)^{-1}x_i
	=
	\alpha^i(c)^{-1}k_iq_ib_i
	\in CBB
	\subseteq CD
	\subseteq CC
	\subseteq U.
	\]
	Thus $y$ $U$-shadows the original pseudo-orbit, and $\alpha$ has
	finite shadowing.
\end{proof}

\section{Shadowing and the identity component of a compact group}\label{thA}

In this section we prove Theorem~A. Throughout the section, we use
without further mention the equivalence between shadowing and finite
shadowing on compact spaces. We first establish the permanence of
finite shadowing under quotients and extensions, and then treat
automorphisms, surjective endomorphisms, and arbitrary endomorphisms
in turn.

We emphasize that shadowing is
not, in general, inherited by closed invariant subgroups, even for
automorphisms of compact connected abelian groups. Indeed, let
\[
\sigma:\mathbb T^{\mathbb Z}\longrightarrow\mathbb T^{\mathbb Z}
\]
be the two-sided full shift,
$\sigma((x_n)_{n\in\mathbb Z})=(x_{n+1})_{n\in\mathbb Z}$. Then
$\sigma$ has two-sided shadowing. However, the diagonal subgroup
\[
D:=\{(x)_{n\in\mathbb Z}:x\in\mathbb T\}
\]
is closed and $\sigma$-invariant, while
$\sigma|_D=\id_D$. Since $D\cong\mathbb T$, the restriction
$\sigma|_D$ does not have shadowing. Thus the passage from an
automorphism of a compact group to its identity component requires
the special structure of the latter and cannot be deduced from a
general hereditary property.

We establish the permanence of
finite shadowing under quotients and extensions, and then treat automorphisms,
surjective endomorphisms, and arbitrary endomorphisms in turn.

\subsection{Quotients and extensions}

We first record two general permanence properties.

\begin{lemma}\label{lem:quotient-shadowing}
Let $G$ and $H$ be topological groups, let $\alpha:G\to G$ and
$\beta:H\to H$ be endomorphisms, and let $\pi:G\to H$ be an open continuous surjective homomorphism satisfying
$\pi\circ\alpha=\beta\circ\pi$. If $\alpha$ has finite shadowing, then
$\beta$ has finite shadowing.
\end{lemma}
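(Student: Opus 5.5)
The plan is to lift a pseudo-orbit in $H$ to a pseudo-orbit in $G$, shadow it there by finite shadowing of $\alpha$, and push the shadowing point back down by $\pi$. Openness of $\pi$ lets us lift small errors to small errors. The intertwining relation $\pi\circ\alpha=\beta\circ\pi$, with $\pi$ a homomorphism, then controls the errors downstairs.

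First, I would fix the neighbourhoods. Let $U$ be an identity neighbourhood in $H$. Then $\pi^{-1}(U)$ is an identity neighbourhood in $G$. Finite shadowing of $\alpha$ gives an identity neighbourhood $V_G$ of $G$ such that every finite $V_G$-pseudo-orbit of $\alpha$ is $\pi^{-1}(U)$-shadowed. Since $\pi$ is open, $V:=\pi(V_G)$ is an identity neighbourhood in $H$; this is the $V$ I would take for $\beta$.

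Next, I would lift a given finite $V$-pseudo-orbit $y_0,\dots,y_n$ of $\beta$ inductively. Write $\beta(y_i)^{-1}y_{i+1}=\pi(g_i)$ with $g_i\in V_G$, which is possible because $\beta(y_i)^{-1}y_{i+1}\in V=\pi(V_G)$. Choose any $x_0\in\pi^{-1}(y_0)$ using surjectivity, and set $x_{i+1}:=\alpha(x_i)g_i$. By induction $\pi(x_i)=y_i$, since
\[
\pi(x_{i+1})=\pi(\alpha(x_i))\,\pi(g_i)=\beta(y_i)\,\beta(y_i)^{-1}y_{i+1}=y_{i+1}.
\]
Also $\alpha(x_i)^{-1}x_{i+1}=g_i\in V_G$, so $(x_i)$ is a finite $V_G$-pseudo-orbit of $\alpha$.

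Finally, I would push the shadowing point down. Take $x\in G$ with $\alpha^i(x)^{-1}x_i\in\pi^{-1}(U)$ for all $i$. Applying $\pi$ and using $\pi\circ\alpha^i=\beta^i\circ\pi$ gives $\beta^i(\pi(x))^{-1}y_i\in U$. Hence $\pi(x)$ $U$-shadows $(y_i)$, and $\beta$ has finite shadowing.

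The only delicate point is the lifting step. A naive lift chosen independently at each index would not control $\alpha(x_i)^{-1}x_{i+1}$. The inductive choice $x_{i+1}=\alpha(x_i)g_i$, together with openness of $\pi$ ensuring $\pi(V_G)$ is a neighbourhood, resolves it.
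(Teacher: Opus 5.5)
Your proposal is correct and follows the paper's proof essentially step for step. Both take $\pi^{-1}(U)$ upstairs and use openness to make $\pi(V_G)$ a neighbourhood. Both lift inductively via $x_{i+1}=\alpha(x_i)g_i$ and push the shadowing point down by $\pi$.
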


\begin{proof}
Let $U$ be an identity neighbourhood in $H$, and put
$U_0:=\pi^{-1}(U)$. Let $V_0$ be an identity neighbourhood in $G$
witnessing finite $U_0$-shadowing for $\alpha$. Since $\pi$ is open,
$\pi(V_0)$ is an identity neighbourhood in $H$.

Let $y_0,y_1,\ldots,y_n$ be a finite $\pi(V_0)$-pseudo-orbit for
$\beta$. We construct a finite $V_0$-pseudo-orbit
$x_0,x_1,\ldots,x_n$ for $\alpha$ such that $\pi(x_i)=y_i$ for every
$i$.

Choose $x_0\in\pi^{-1}(y_0)$. Suppose that $x_i$ has been chosen. Since
$\beta(y_i)^{-1}y_{i+1}\in\pi(V_0)$, choose $v_i\in V_0$ such that
$\pi(v_i)=\beta(y_i)^{-1}y_{i+1}$, and put
$x_{i+1}:=\alpha(x_i)v_i$. Then
\[
\pi(x_{i+1})
=\beta(\pi(x_i))\pi(v_i)
=\beta(y_i)\beta(y_i)^{-1}y_{i+1}
=y_{i+1},
\]
and $\alpha(x_i)^{-1}x_{i+1}=v_i\in V_0$. Thus
$x_0,x_1,\ldots,x_n$ is a finite $V_0$-pseudo-orbit.

Let $x\in G$ $U_0$-shadow this pseudo-orbit. Then, for every
$0\leq i\leq n$,
\[
\beta^i(\pi(x))^{-1}y_i
=\pi\bigl(\alpha^i(x)^{-1}x_i\bigr)
\in\pi(U_0)
=U.
\]
Hence $\pi(x)$ $U$-shadows the original pseudo-orbit.
\end{proof}

\begin{lemma}\label{lem:extension-shadowing}
Let $G$ be a compact group, let $\alpha:G\to G$ be an endomorphism,
and let $N\trianglelefteq G$ be a closed normal subgroup such that
$\alpha(N)\subseteq N$. Let $\alpha_N:=\alpha|_N$, and let
$\bar\alpha:G/N\to G/N$ be the induced endomorphism. If both
$\alpha_N$ and $\bar\alpha$ have finite shadowing, then $\alpha$ has
finite shadowing.
\end{lemma}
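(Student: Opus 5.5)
The plan is to lift the pseudo-orbit to the quotient, shadow it there, pull the shadowing point back to \(G\), and then correct the resulting error terms, which lie near \(N\), using shadowing of \(\alpha_N\). The argument copies the error bookkeeping from the proof of Lemma~\ref{lem:stable-image}, with \(N\) playing the role that \(G_\infty\) played there.

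\emph{Choice of neighbourhoods.} Let \(\pi:G\to G/N\) be the quotient map. It is open, so images of identity neighbourhoods are identity neighbourhoods. Fix an identity neighbourhood \(U\) in \(G\) and choose an open identity neighbourhood \(C\) with \(CC\subseteq U\). By finite shadowing of \(\alpha_N\), choose \(D_N\) in \(N\) such that every finite \(D_N\)-pseudo-orbit in \(N\) is \((C\cap N)\)-shadowed. Next choose an open identity neighbourhood \(D\subseteq C\) with \(D\cap N\subseteq D_N\). By continuity and compactness, take a symmetric identity neighbourhood \(B\) with \(\alpha(B)BB\subseteq D\); if needed, also pass to a neighbourhood invariant under conjugation, which exists in compact groups. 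Finally, apply finite shadowing of \(\bar\alpha\) with target neighbourhood \(\pi(B)\), and pick \(V\subseteq B\) in \(G\) with \(\pi(V)\) inside the resulting witness.

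\emph{Lifting and comparing.} Let \(x_0,\dots,x_n\) be a finite \(V\)-pseudo-orbit for \(\alpha\). Its image \(\pi(x_i)\) is a pseudo-orbit for \(\bar\alpha\), so it is \(\pi(B)\)-shadowed by some \(\pi(g)\). Hence
\[
\alpha^i(g)^{-1}x_i\in BN=NB,\qquad 0\le i\le n.
\]
Write \(\alpha^i(g)^{-1}x_i=k_iq_i\) with \(k_i\in N\) and \(q_i\in B\), choosing \(k_0,q_0\) the same way. Then
\[
\alpha(k_i)^{-1}k_{i+1}=\alpha(q_i)\,\bigl(\alpha(x_i)^{-1}x_{i+1}\bigr)\,q_{i+1}^{-1}.
\]
The key identity here is \(\alpha(\alpha^i(g)^{-1}x_i)^{-1}\alpha^{i+1}(g)^{-1}x_{i+1}=\alpha(x_i)^{-1}x_{i+1}\), because the \(g\)-terms cancel. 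So the right-hand side lies in \(\alpha(B)VB^{-1}\subseteq\alpha(B)BB\subseteq D\), while the left-hand side lies in \(N\). It follows that \((k_i)\) is a \(D_N\)-pseudo-orbit in \(N\).

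\emph{Conclusion.} Let \(c\in N\) \((C\cap N)\)-shadow \((k_i)\), and put \(y=gc\). Then
\[
\alpha^i(y)^{-1}x_i=\alpha^i(c)^{-1}k_iq_i\in CB\subseteq CC\subseteq U,
\]
so \(y\) \(U\)-shadows the original pseudo-orbit and \(\alpha\) has finite shadowing.

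The main obstacle is making the neighbourhood choices in the right order, so that the shadowing tolerance in the quotient is fixed by \(B\) before \(V\) is chosen. One must also check that \(BN=NB\); this holds because \(N\) is normal, and it is what allows the decomposition \(k_iq_i\) with the \(N\)-factor on the left.
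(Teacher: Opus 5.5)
Your proof is correct and is essentially the paper's argument: you shadow the projected pseudo-orbit in $G/N$, factor the discrepancies $\alpha^i(g)^{-1}x_i$ through $NB$, use $\alpha(k_i)^{-1}k_{i+1}=\alpha(q_i)\,\alpha(x_i)^{-1}x_{i+1}\,q_{i+1}^{-1}$ to see that the $N$-parts form a pseudo-orbit for $\alpha_N$, and then correct $g$ by a shadowing point in $N$. The inclusion $B\subseteq C$ needed in your final step (which the paper imposes explicitly as $W\subseteq U'$) holds in your setup because $B\subseteq\alpha(B)BB\subseteq D\subseteq C$.
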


\begin{proof}
Let $\pi:G\to G/N$ be the quotient map. Given an identity
neighbourhood $U$ in $G$, choose a symmetric identity neighbourhood
$U'$ such that $U'U'\subseteq U$.

Choose a symmetric identity neighbourhood $W'$ in $G$ such that
$W'\cap N$ witnesses finite $(U'\cap N)$-shadowing for $\alpha_N$.
By the continuity of $\alpha$ and multiplication, there is a symmetric
identity neighbourhood $W$ in $G$ such that
$\alpha(W)WW\subseteq W'$. Shrinking $W$ if necessary, we may also
assume that $W\subseteq U'$.

By finite shadowing for $\bar\alpha$, choose an identity neighbourhood
$\bar V$ in $G/N$ such that every finite $\bar V$-pseudo-orbit is
$\pi(W)$-shadowed. Choose a symmetric identity neighbourhood $V$ in
$G$ such that $V\subseteq W$ and $\pi(V)\subseteq\bar V$.

Let $x_0,x_1,\ldots,x_n$ be a finite $V$-pseudo-orbit for $\alpha$.
Put $e_i:=\alpha(x_i)^{-1}x_{i+1}\in V$ for $0\leq i<n$. Then
\[
\bar\alpha(\pi(x_i))^{-1}\pi(x_{i+1})
=\pi(e_i)\in\pi(V)\subseteq\bar V.
\]
Thus $\pi(x_0),\pi(x_1),\ldots,\pi(x_n)$ is a finite
$\bar V$-pseudo-orbit for $\bar\alpha$.

Choose $q\in G/N$ which $\pi(W)$-shadows this pseudo-orbit, and choose
$x\in G$ such that $\pi(x)=q$. For $0\leq i\leq n$, put
$d_i:=\alpha^i(x)^{-1}x_i$. Then
\[
\pi(d_i)
=\bar\alpha^i(q)^{-1}\pi(x_i)\in\pi(W),
\]
and hence $d_i\in\pi^{-1}(\pi(W))=WN=NW$. We may therefore choose
$y_i\in N$ and $w_i\in W$ such that $d_i=y_iw_i$.

Since $d_{i+1}=\alpha(d_i)e_i$, we have
\[
\alpha(y_i)\alpha(w_i)e_i=y_{i+1}w_{i+1}.
\]
It follows that
\[
\alpha(y_i)^{-1}y_{i+1}
=\alpha(w_i)e_iw_{i+1}^{-1}
\in\alpha(W)VW
\subseteq\alpha(W)WW
\subseteq W'.
\]
Since the left-hand side belongs to $N$, the sequence
$y_0,y_1,\ldots,y_n$ is a finite $(W'\cap N)$-pseudo-orbit for
$\alpha_N$.

Consequently, there is $y\in N$ such that
$\alpha^i(y)^{-1}y_i\in U'\cap N$ for every $0\leq i\leq n$. Put
$z:=xy$. Then
\[
\alpha^i(z)^{-1}x_i
=\alpha^i(y)^{-1}d_i
=\alpha^i(y)^{-1}y_iw_i
\in U'W
\subseteq U'U'
\subseteq U
\]
for every $0\leq i\leq n$. Therefore $z$ $U$-shadows
$x_0,x_1,\ldots,x_n$.
\end{proof}

As an immediate consequence, one obtains one direction of the desired
identity-component reduction.

\begin{proposition}\label{prop:component-to-group}
Let $G$ be a compact group and let $\alpha:G\to G$ be an endomorphism.
If $\alpha|_{G_0}$ has finite shadowing, then $\alpha$ has finite
shadowing.
\end{proposition}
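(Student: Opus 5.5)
The plan is to apply Lemma~\ref{lem:extension-shadowing} with $N:=G_0$. First, $G_0$ is a closed normal subgroup of $G$ (it is characteristic), and it is $\alpha$-invariant: $\alpha(G_0)$ is a connected subgroup containing $e$, so $\alpha(G_0)\subseteq G_0$. Hence $\alpha_N=\alpha|_{G_0}$ is defined, and $\alpha$ induces a continuous endomorphism $\bar\alpha$ of the quotient $G/G_0$, satisfying $\bar\alpha\circ\pi=\pi\circ\alpha$ for the quotient map $\pi:G\to G/G_0$.

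By hypothesis $\alpha_N$ has finite shadowing, so it remains to show that $\bar\alpha$ does. The quotient $G/G_0$ is a compact totally disconnected group, since the identity component of a compact group has totally disconnected quotient. By Example~\ref{examples}(2), every endomorphism of a compact totally disconnected group has shadowing, and therefore finite shadowing. Thus $\bar\alpha$ has finite shadowing, and Lemma~\ref{lem:extension-shadowing} yields finite shadowing for $\alpha$.

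There is no real obstacle, since the substantive work lies in Example~\ref{examples}(2) and Lemma~\ref{lem:extension-shadowing}. The one point to state explicitly is that Example~\ref{examples}(2) does not assume metrizability: it realizes $(G/G_0,\bar\alpha)$ as an inverse limit of one-sided group shifts over finite alphabets with surjective bonding maps, and then invokes Theorem~\ref{thm:inverse-limit-shadowing}. So it applies to the possibly non-metrizable group $G/G_0$.
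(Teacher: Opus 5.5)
Your proposal is correct and is essentially the paper's own proof: take $N=G_0$ in Lemma~\ref{lem:extension-shadowing} and handle the totally disconnected quotient $G/G_0$ with Example~\ref{examples}(2). Your remark that Example~\ref{examples}(2) needs no metrizability, because it passes through Theorem~\ref{thm:inverse-limit-shadowing}, is accurate.
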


\begin{proof}
It is evident that
$
\alpha(G_0)\subseteq G_0.
$

The quotient $G/G_0$ is compact and totally disconnected. Therefore
the induced endomorphism on $G/G_0$ has finite shadowing by Example \ref{examples}(2). The conclusion
now follows from Lemma~\ref{lem:extension-shadowing}.
\end{proof}

By the preceding proposition, it remains only to prove the necessity in Theorem~A. For this purpose, we need the following lemma for \textbf{automorphisms}. We first deduce its general compact case from the metrizable case, while the proof of the latter will be postponed until the end of the paper.

\begin{lemma}\label{lem:aut-component}
	Let $G$ be a compact group and let
	$\gamma\in\operatorname{Aut}(G)$. If $\gamma$ has shadowing, then
	$\gamma|_{G_0}$ has shadowing.
\end{lemma}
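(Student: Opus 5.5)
We follow the strategy announced in the introduction. The lemma is first reduced to the compact metrizable case through invariant metrizable quotients and the Inverse-limit Shadowing Theorem (Theorem~\ref{thm:inverse-limit-shadowing}). The metrizable case is then treated using the connected structural criterion.

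\emph{Step 1: reduction to metrizable quotients.} Let $\gamma\in\Aut(G)$ have shadowing. For every closed normal subgroup $N$ of $G$ with $G/N$ metrizable (for instance, $G/N$ a Lie group), put $N_\gamma:=\bigcap_{n\in\mathbb Z}\gamma^n(N)$. Then $N_\gamma$ is a closed normal subgroup with $\gamma(N_\gamma)=N_\gamma$. Moreover, $G/N_\gamma$ embeds into the countable product $\prod_{n\in\mathbb Z}G/\gamma^n(N)$, so it is metrizable. The family $\mathcal N$ of such $\gamma$-invariant normal subgroups $M$ with $G/M$ metrizable is directed downward, since the intersection of two members is again a member. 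It also has trivial intersection, by Peter--Weyl. Therefore
\[
(G,\gamma)\cong\varprojlim_{M\in\mathcal N}(G/M,\gamma_M)
\]
with surjective bonding maps. Each induced automorphism $\gamma_M$ has shadowing by Lemma~\ref{lem:quotient-shadowing}, because quotient maps of compact groups are open.

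The images $G_0M/M$ are the identity components $(G/M)_0$, by the observation in the proof of Lemma~\ref{lem:stable-component}. We also have $G_0\cong\varprojlim_M G_0M/M$, since $G_0=\bigcap_M G_0M$ by compactness and connectedness, and the bonding maps remain surjective. Granting the metrizable case, each $\gamma_M|_{(G/M)_0}$ has shadowing. Theorem~\ref{thm:inverse-limit-shadowing} then gives shadowing for $\gamma|_{G_0}$.

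\emph{Step 2: the metrizable case.} Now let $G$ be compact and metrizable, with $\gamma$ having (two-sided) shadowing. The restriction $\gamma|_{G_0}$ is an automorphism of a compact connected group, so its stable image is $G_0$ itself. By Theorem~\ref{thm:connected-endo-shad}, it therefore suffices to verify two conditions:
\begin{enumerate}
\item the rational dual of $\gamma$ on $A=Z(G_0)_0$ is hyperbolic on every finite-dimensional invariant subspace;
\item the permutation $\tau$ of the simple factors of $S/Z(S)$, where $S=G_0'$, has no finite orbit.
\end{enumerate}

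Both conditions will be proved by contradiction. A failure of either produces a nontrivial $\gamma$-invariant connected piece on which $\gamma$ acts isometrically, or at least non-expansively. In case (1) this piece is a finite-dimensional solenoidal factor with a unimodular eigenvalue. In case (2) it is a finite product $\prod_{i\in O}S_i$ over a finite $\tau$-orbit $O$, on which some power of $\gamma$ acts by an automorphism of a compact semisimple Lie group; this automorphism is close to an inner one and is therefore equicontinuous.

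The plan is to transfer this non-shadowing behaviour back to $G$ via the two auxiliary lemmas of Aoki and Dateyama. For an automorphism with shadowing (equivalently, the OE-property), these lemmas show that the property passes to the restriction on $G_0$, and to quotients by suitable characteristic subgroups such as $Z(G_0)_0$ or $G_0'$. These subgroups are $\gamma$-invariant and normal in $G$, since they are characteristic in the normal subgroup $G_0$.

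\emph{Main obstacle.} The crux is this transfer. As the diagonal example preceding this subsection shows, shadowing is not hereditary to closed invariant subgroups, so the small-pseudo-orbit construction inside $G_0$ cannot be naive. The approach is to build a pseudo-orbit inside the equicontinuous connected piece that drifts slowly, for instance along a one-parameter rotation with a unimodular eigenvalue, or along a path in $\prod_{i\in O}S_i$.

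The difficulty is that a shadowing orbit in $G$ may escape through $G/G_0$. This is handled using the fact that $G/G_0$ is totally disconnected. Pass to a small open normal subgroup $U$ with $G_0\le U$. Any orbit shadowing a pseudo-orbit in $G_0$ sufficiently closely must remain in $U$ for all times; combining this with connectedness should force it into the appropriate coset structure. There the drift contradicts equicontinuity.

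If this direct argument proves delicate, the fallback is to invoke the Aoki--Dateyama lemmas verbatim. They are stated precisely for compact metrizable groups and automorphisms, which is why the metrizable reduction of Step~1 was made first.
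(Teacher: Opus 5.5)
\textbf{There is a genuine gap in the metrizable case.} Your Step~1 is the paper's reduction and is fine. The problem is Step~2, which carries the actual content of the lemma. You name the right target, namely verifying the two conditions of Theorem~\ref{thm:connected-endo-shad} by contradiction, but you never supply a working mechanism, and the one you sketch fails.

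The neutral pieces you identify are quotients of $G_0$, not $\gamma$-equivariant quotients of $G$. One is a finite-dimensional solenoidal quotient of $A=Z(G_0)_0$ with a unimodular eigenvalue; the other is $\prod_{i\in O}S_i$. Shadowing of $\gamma$ therefore says nothing about them directly. A shadowing point for a pseudo-orbit in $G_0$ has the form $x=x_0h$ with $h$ small, but in general $h\notin G_0$, and $\gamma^i(h)$ can drift along $G_0$-directions. Knowing that the orbit stays inside an open subgroup $U\supseteq G_0$ gives no control over this drift; such a $U$ need not even be $\gamma$-invariant. The sentence ``connectedness should force it into the appropriate coset structure'' is not an argument. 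You also misdescribe the Aoki--Dateyama lemmas: they do not say that shadowing passes to $G_0$, which is the statement being proved.

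\textbf{What is missing.}

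\emph{Preliminary reduction.} First kill $Z(S)$, where $S=G_0'$. It is totally disconnected, normal and $\gamma$-invariant in $G$. By Lemmas~\ref{lem:extension-shadowing} and~\ref{lem:quotient-shadowing} you may then assume $G_0=A\times S$, with $S$ a product of centre-free simple factors.

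\emph{Abelian part.}
\begin{enumerate}
\item From a non-hyperbolic invariant subspace $W_0$, pick a cyclic non-hyperbolic $W_\chi$.
\item Let $W$ be the $\mathbb Q$-span of the translates of $W_\chi$ under the conjugation action of $G$ on $\widehat A$. It is finite-dimensional because $G$-orbits in the discrete group $\widehat A$ are finite. It is $T_{\mathbb Q}$-invariant because $T(g\cdot\eta)=\gamma^{-1}(g)\cdot T\eta$.
\item Put $B=(W\cap\widehat A)^\perp$. It is now normal in $G$, so $K=G/(B\times S)$ is a genuine equivariant quotient with shadowing, and $K_0\cong A/B$ is finite-dimensional.
\item Lemma~\ref{lem:abelian-comp} supplies a totally disconnected, normal, invariant $L$ with $K_0L$ open.
\item Shadowing passes to the open invariant subgroup $K_0L$, and then to $K_0L/L\cong K_0/(K_0\cap L)$. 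This is a finite-dimensional solenoid whose rational dual is $T_{\mathbb Q}|_W$, which contradicts Proposition~\ref{prop:finite-dim}.
\end{enumerate}

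\emph{Semisimple part.} Pass to $G/A$, whose identity component is the centre-free group $S$. A finite $\tau$-orbit $J$ gives the nontrivial closed normal invariant Lie subgroup $\prod_{j\in J}S_j$, contradicting Lemma~\ref{lem:AD-semisimple}.

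Your ``fallback'' gestures at this route, but without these specific constructions the metrizable case remains unproved. The two essential ones are the $G$-invariant span that makes $B$ normal in $G$, and the open invariant subgroup $K_0L$ that lets you discard $G/G_0$.
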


\begin{proof}[Proof assuming the metrizable case]
	Let $\mathcal N$ be the family of all closed normal subgroups $N$ of
	$G$ such that
	\[
	\gamma(N)=N
	\qquad\text{and}\qquad
	G/N\text{ is metrizable}.
	\]
	We first observe that
	\[
	\bigcap_{N\in\mathcal N}N=\{e\}.
	\]
	Indeed, let $U$ be an identity neighbourhood in $G$. By the standard
	structure theory of compact groups, there is a closed normal subgroup
	$K\leq G$ such that
	\[
	K\subseteq U
	\qquad\text{and}\qquad
	G/K\text{ is a compact Lie group}.
	\]
	Put
	\[
	N_U:=\bigcap_{n\in\mathbb Z}\gamma^n(K).
	\]
	Then $N_U$ is closed and normal, $\gamma(N_U)=N_U$, and
	$N_U\subseteq K\subseteq U$. Moreover, the natural homomorphism
	\[
	G/N_U\longrightarrow
	\prod_{n\in\mathbb Z}G/\gamma^n(K)
	\]
	is a topological embedding. Since every factor on the right is a
	compact Lie group, the product is metrizable. Hence $G/N_U$ is
	metrizable, so $N_U\in\mathcal N$. This proves the claim.
	
	The family $\mathcal N$ is directed under reverse inclusion. Indeed,
	if $N_1,N_2\in\mathcal N$, then $N_1\cap N_2$ is closed, normal, and
	$\gamma$-invariant. Moreover, the canonical homomorphism
	\[
	G/(N_1\cap N_2)
	\longrightarrow
	G/N_1\times G/N_2
	\]
	is a topological embedding. Hence $G/(N_1\cap N_2)$ is metrizable,
	and therefore $N_1\cap N_2\in\mathcal N$.
	
	For each $N\in\mathcal N$, let $\gamma_N:G/N\longrightarrow G/N$
	be the automorphism induced by $\gamma$.  Whenever $M\subseteq N$, let
	\[
	\varphi_N^M:G/M\longrightarrow G/N,
	\qquad
	\varphi_N^M(xM)=xN,
	\]
	be the canonical quotient homomorphism. These maps are surjective and
	intertwine the corresponding automorphisms. Thus
	\[
	\bigl\{(G/N,\gamma_N),\varphi_N^M:M\subseteq N\bigr\}
	\]
	is an inverse system of compact metrizable dynamical systems, and the
	canonical map gives a conjugacy
	\[
	(G,\gamma)
	\cong
	\varprojlim_{N\in\mathcal N}(G/N,\gamma_N).
	\]
	
	Taking identity components, we obtain
	\[
	(G_0,\gamma|_{G_0})
	\cong
	\varprojlim_{N\in\mathcal N}
	\bigl((G/N)_0,\gamma_N|_{(G/N)_0}\bigr).
	\]
	To justify this, recall that a surjective homomorphism between compact
	groups maps the identity component onto the identity component.
	Consequently, all the restricted bonding maps
	\[
	(\varphi_N^M)|_{(G/M)_0}:(G/M)_0\longrightarrow(G/N)_0
	\]
	are surjective. Their inverse limit is therefore connected and is
	contained in the identity component of
	$\varprojlim_{N\in\mathcal N}G/N$. The reverse inclusion follows
	because every coordinate projection maps the identity component into
	the identity component.
	
	By Lemma~\ref{lem:quotient-shadowing}, each $\gamma_N$ has shadowing.
	Since $G/N$ is metrizable, the metrizable case of the present lemma
	implies that
	\[
	\gamma_N|_{(G/N)_0}
	\]
	has shadowing for every $N\in\mathcal N$. Theorem~
	\ref{thm:inverse-limit-shadowing} now shows that
	$\gamma|_{G_0}$ has shadowing.
\end{proof}
\subsection{Natural extensions of surjective endomorphisms}
In this subsection, we let $K$ be a compact group and let $\beta:K\to K$ be a surjective
endomorphism. 

\begin{lemma}\label{lem:natural-extension-shadowing}
	If $\beta$ has finite shadowing, then $\widetilde\beta$ has finite
	shadowing.
\end{lemma}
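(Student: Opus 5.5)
The plan is to work directly in the group $\widetilde K=\{(x_0,x_1,\ldots)\in K^{\mathbb N}:\beta(x_{n+1})=x_n\}$. This is a closed subgroup of $K^{\mathbb N}$, and its product topology gives the left uniformity. A base of identity neighbourhoods is given by the sets
\[
\widetilde U_m:=\{(x_n)\in\widetilde K:x_n\in U\text{ for }0\le n\le m\},
\]
where $U$ is an identity neighbourhood of $K$ and $m\ge0$. Since $\beta^{m-n}(x_m)=x_n$, a condition on the coordinate $x_m$ alone controls the earlier coordinates. Hence it suffices to shadow with respect to the single coordinate $m$, i.e. to find a point whose $m$-th coordinate stays $U'$-close, where $U'$ is chosen with $\beta^{j}(U')\subseteq U$ for $0\le j\le m$. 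Equivalently, $\pi_m:\widetilde K\to K$ is a surjective homomorphism (because $\beta$ is surjective), it intertwines $\widetilde\beta$ with $\beta$, and the sets $\pi_m^{-1}(U')$ form a neighbourhood base.

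\textbf{Transfer of pseudo-orbits.} Let $U$ and $m$ be given, with $U'$ as above. Take $V$ witnessing finite $U'$-shadowing for $\beta$, and let $\widetilde V:=\pi_m^{-1}(V)$. If $(\xi^i)_{0\le i\le n}$ is a finite $\widetilde V$-pseudo-orbit for $\widetilde\beta$, then $\pi_m\circ\widetilde\beta=\beta\circ\pi_m$ gives
\[
\beta\bigl(\pi_m(\xi^i)\bigr)^{-1}\pi_m(\xi^{i+1})=\pi_m\bigl(\widetilde\beta(\xi^i)^{-1}\xi^{i+1}\bigr)\in V .
\]
So $y_i:=\pi_m(\xi^i)$ is a finite $V$-pseudo-orbit for $\beta$. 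Choose $y\in K$ that $U'$-shadows it. By surjectivity of $\beta$, lift $y$ to some $\eta\in\widetilde K$ with $\pi_m(\eta)=y$: take the coordinates $\eta_j=\beta^{m-j}(y)$ for $j\le m$, and choose arbitrary successive preimages for $j>m$. Then
\[
\pi_m\bigl(\widetilde\beta^i(\eta)^{-1}\xi^i\bigr)=\beta^i(y)^{-1}y_i\in U',
\]
and applying $\beta^{m-j}$ puts coordinate $j$ in $U$ for every $j\le m$. Thus $\widetilde\beta^i(\eta)^{-1}\xi^i\in\widetilde U_m$, and $\eta$ shadows the pseudo-orbit.

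\textbf{Main point to check.} The delicate step is verifying that the sets $\pi_m^{-1}(U')$ really form a base of the left uniformity of $\widetilde K$, and that $\pi_m^{-1}(V)$ is an identity neighbourhood, so that the definition of finite shadowing applies uniformly. The first holds because $\widetilde K$ carries the subspace topology of the product, and the basic product neighbourhoods restrict coordinates $0,\ldots,m$ only. The second holds by continuity of $\pi_m$. This argument is essentially Lemma~\ref{lem:quotient-shadowing} run in reverse, and it works because the kernels $\ker\pi_m$ shrink to $\{e\}$: $(\widetilde K,\widetilde\beta)$ is the inverse limit of copies of $(K,\beta)$ along the surjective bonding maps $\beta$. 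Alternatively, one could conclude at once from Theorem~\ref{thm:inverse-limit-shadowing}, with the Mittag--Leffler condition holding trivially since the bonding maps are surjective. However, $\widetilde\beta$ acts on that limit as a shifted map rather than coordinatewise, so the direct argument above is the safer route.
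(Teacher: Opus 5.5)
Your argument is correct, but it is more hands-on than the paper's proof.

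\textbf{The paper's route.} The paper observes that $\widetilde K=\varprojlim(K,\beta)$ has surjective, hence Mittag--Leffler, bonding maps. It notes that the coordinatewise map on this limit is exactly $\widetilde\beta$, because $(\beta(x_0),\beta(x_1),\ldots)=(\beta(x_0),x_0,x_1,\ldots)$. It then invokes Theorem~\ref{thm:inverse-limit-shadowing}, together with the equivalence of finite shadowing and shadowing on compact spaces.

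\textbf{Your route.} You prove the needed special case of the inverse-limit theorem by hand. The sets $\pi_m^{-1}(U')$ form a base at $e$, pseudo-orbits push forward under the equivariant homomorphism $\pi_m$, and a shadowing point in $K$ lifts to $\widetilde K$ by surjectivity of $\beta$. This makes the argument self-contained. It works directly with finite pseudo-orbits and needs neither the external inverse-limit theorem nor the finite/infinite shadowing equivalence. It also uses only the surjectivity of $\beta$, not compactness. The paper's route buys brevity.

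\textbf{Your closing remark is mistaken.} $\widetilde\beta$ does act coordinatewise on the inverse limit. Your own identity $\pi_m\circ\widetilde\beta=\beta\circ\pi_m$ for every $m$ is precisely that statement. So the one-line appeal to Theorem~\ref{thm:inverse-limit-shadowing} is just as safe as your direct argument.
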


\begin{proof}
	Since $K$ is compact, finite shadowing is equivalent to shadowing.
	The natural extension is the inverse limit of the inverse sequence
	\[
	(K,\beta)
	\xleftarrow{\ \beta\ }
	(K,\beta)
	\xleftarrow{\ \beta\ }
	(K,\beta)
	\xleftarrow{\ \beta\ }
	\cdots.
	\]
	All bonding maps are surjective, so the inverse system satisfies the
	Mittag--Leffler condition. Moreover, the coordinatewise map induced
	on the inverse limit is precisely $\widetilde\beta$, since for
	$(x_j)_{j\geq0}\in\widetilde K$,
	\[
	\bigl(\beta(x_0),\beta(x_1),\beta(x_2),\ldots\bigr)
	=
	\bigl(\beta(x_0),x_0,x_1,\ldots\bigr).
	\]
	Theorem~\ref{thm:inverse-limit-shadowing} therefore implies that
	$\widetilde\beta$ has shadowing, and hence finite shadowing.
\end{proof}

\begin{lemma}\label{lem:natural-extension-component}
Let $\beta:K\to K$ be a surjective endomorphism. Then
\begin{equation*}
(\widetilde K)_0
=
\left\{
(x_j)_{j\geq 0}\in\widetilde K:
x_j\in K_0 \text{ for every } j\geq 0
\right\}.
\end{equation*}
Equivalently,
$(\widetilde K)_0=\varprojlim(K_0,\beta|_{K_0})$.
Moreover, the zeroth-coordinate projection
$\pi_0:(\widetilde K)_0\to K_0$ is surjective.
\end{lemma}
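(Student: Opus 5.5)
Denote by $L$ the set on the right-hand side, i.e.\ the set of threads in $\widetilde K$ all of whose coordinates lie in $K_0$. Since $\beta(K_0)\subseteq K_0$, $L$ is exactly $\varprojlim(K_0,\beta|_{K_0})$, which gives the stated equivalence. The plan is to show three things in turn: the bonding maps of this inverse sequence are surjective; $L$ is connected, so $L\subseteq(\widetilde K)_0$; and $(\widetilde K)_0\subseteq L$. Surjectivity of $\pi_0$ on $(\widetilde K)_0$ will then follow directly.

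\emph{Surjectivity.} First I check that $\beta|_{K_0}:K_0\to K_0$ is onto. This is the observation used in the proof of Lemma~\ref{lem:stable-component}: a continuous surjective homomorphism of compact groups maps the identity component onto the identity component. Applied to the surjective $\beta$, it gives $\beta(K_0)=K_0$. Consequently, given $x_0\in K_0$, I can choose inductively $x_{j+1}\in K_0$ with $\beta(x_{j+1})=x_j$. This produces a thread in $L$ with zeroth coordinate $x_0$, so $\pi_0(L)=K_0$.

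\emph{$L\subseteq(\widetilde K)_0$.} The space $L$ is the inverse limit of an inverse sequence of compact connected spaces, and an inverse limit of compact connected Hausdorff spaces is connected. The argument is standard: if $L=P\sqcup Q$ with $P,Q$ disjoint nonempty clopen sets, then by compactness the separation is pulled back from some finite stage $K_0\times\cdots\times K_0$, projected onto the $n$-th coordinate. Surjectivity of the bonding maps makes that projection onto $K_0$, which would then split the connected group $K_0$, a contradiction. Since $L$ is a connected subgroup of $\widetilde K$ containing the identity, $L\subseteq(\widetilde K)_0$.

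\emph{$(\widetilde K)_0\subseteq L$.} Each coordinate projection $\pi_j:\widetilde K\to K$ is a continuous homomorphism, so it maps the connected set $(\widetilde K)_0$ into a connected set containing $e$, namely into $K_0$. Hence every coordinate of an element of $(\widetilde K)_0$ lies in $K_0$. Combining the two inclusions, $(\widetilde K)_0=L$, and the first step shows that $\pi_0:(\widetilde K)_0\to K_0$ is surjective.

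The only step with any content is the connectedness of the inverse limit. I will either quote the standard fact that inverse limits of continua are continua, or, to stay closer to the paper, use the finite-stage separation argument above. Everything else is formal bookkeeping.
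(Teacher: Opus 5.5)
Your proof is correct and follows the same route as the paper. Both arguments use $\beta(K_0)=K_0$, connectedness of $L=\varprojlim(K_0,\beta|_{K_0})$ for $L\subseteq(\widetilde K)_0$, coordinate projections for the reverse inclusion, and surjectivity of the bonding maps for the surjectivity of $\pi_0$; your finite-stage separation argument and explicit inductive choice of preimages only fill in details the paper treats as standard.
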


\begin{proof}
Since $\beta$ is surjective, $\beta(K_0)=K_0$. Indeed, a continuous
surjective homomorphism between compact groups maps the identity
component onto the identity component.

Put
\begin{equation*}
L
=
\left\{
(x_j)_{j\geq 0}\in\widetilde K:
x_j\in K_0 \text{ for every } j\geq 0
\right\}.
\end{equation*}
The group $L$ is the inverse limit of compact connected groups with
surjective bonding maps. Hence $L$ is connected, and therefore
$L\subseteq(\widetilde K)_0$.

Conversely, for every $j\geq 0$, the coordinate projection
$\pi_j:\widetilde K\to K$ is continuous. Hence
$\pi_j((\widetilde K)_0)\subseteq K_0$, and consequently
$(\widetilde K)_0\subseteq L$. Thus $(\widetilde K)_0=L$.

Finally, the bonding map
$\beta|_{K_0}:K_0\to K_0$ is surjective. Hence every coordinate
projection from the inverse limit $\varprojlim(K_0,\beta|_{K_0})$
onto $K_0$ is surjective. In particular,
$\pi_0:(\widetilde K)_0\to K_0$ is surjective.
\end{proof}

We can now prove the remaining implication for surjective
endomorphisms.

\begin{corollary}\label{surshad}
Let $G$ be a compact group and let $\alpha:G\to G$ be a surjective
endomorphism satisfying the finite shadowing property. Then $\alpha|_{G_0}$ has finite shadowing as well.
\end{corollary}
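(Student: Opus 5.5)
We pass through the natural extension. Since $\alpha$ is surjective, $\widetilde G=\varprojlim(G,\alpha)$ is a compact group and $\widetilde\alpha$ is a topological automorphism of it. Its inverse is the left shift, which is continuous, and $\widetilde\alpha$ is a continuous bijective homomorphism of a compact group. By Lemma~\ref{lem:natural-extension-shadowing}, $\widetilde\alpha$ has finite shadowing, hence shadowing.

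Applying Lemma~\ref{lem:aut-component} to $\gamma=\widetilde\alpha$, we obtain that $\widetilde\alpha|_{(\widetilde G)_0}$ has shadowing. This is the one substantive input, and it is deferred; everything else in the argument is formal.

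By Lemma~\ref{lem:natural-extension-component}, $(\widetilde G)_0=\varprojlim(G_0,\alpha|_{G_0})$. Under this identification, $\widetilde\alpha|_{(\widetilde G)_0}$ is precisely the natural extension of $\alpha|_{G_0}$. Moreover, the projection $\pi_0:(\widetilde G)_0\to G_0$ is a continuous surjective homomorphism satisfying $\pi_0\circ\widetilde\alpha=\alpha\circ\pi_0$.

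It remains to descend shadowing along $\pi_0$ by means of Lemma~\ref{lem:quotient-shadowing}, whose hypothesis requires $\pi_0$ to be open. Openness holds because a continuous surjective homomorphism between compact groups is open: it is a closed map, so it induces a topological isomorphism $(\widetilde G)_0/\ker\pi_0\cong G_0$, and quotient maps are open. Lemma~\ref{lem:quotient-shadowing} then shows that $\alpha|_{G_0}$ has finite shadowing.

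The only point needing care is the bookkeeping that $\widetilde\alpha$ really restricts to $(\widetilde G)_0$ and matches the coordinatewise map there. This holds because $(\widetilde G)_0$ is characteristic, so it is invariant under the automorphism $\widetilde\alpha$, and on it the coordinatewise map is exactly $\widetilde{\alpha|_{G_0}}$. The genuine difficulty is thus entirely absorbed into the automorphism case of Lemma~\ref{lem:aut-component}.
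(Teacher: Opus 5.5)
Your proposal is correct and follows the paper's proof essentially step for step: pass to the natural extension, apply Lemma~\ref{lem:natural-extension-shadowing} and then Lemma~\ref{lem:aut-component} to $\widetilde\alpha$, and descend to $G_0$ along the zeroth-coordinate projection using Lemma~\ref{lem:natural-extension-component} and Lemma~\ref{lem:quotient-shadowing}. Your explicit check that $\pi_0$ is open, which Lemma~\ref{lem:quotient-shadowing} requires and the paper leaves implicit, is a welcome addition.
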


\begin{proof}
Suppose that $\alpha$ has finite shadowing. Let
$
(\widetilde G,\widetilde\alpha)
$
be its natural extension. By Lemma~\ref{lem:natural-extension-shadowing},
$\widetilde\alpha$ has finite shadowing and hence shadowing. Since $\widetilde\alpha$ is an
automorphism, Lemma~\ref{lem:aut-component} implies that
$
\widetilde\alpha|_{(\widetilde G)_0}
$
has finite shadowing.

By Lemma~\ref{lem:natural-extension-component}, the zeroth-coordinate
projection
$
p_0:(\widetilde G)_0\to G_0
$
is a continuous surjective homomorphism satisfying
\[
p_0\circ\widetilde\alpha
=
\alpha|_{G_0}\circ p_0.
\]
Lemma~\ref{lem:quotient-shadowing} now shows that
$\alpha|_{G_0}$ has finite shadowing.
\end{proof}

\subsection{Proof of Theorem A}

For an arbitrary compact group endomorphism, the preceding arguments
give the following rigorous reduction.

\begin{proposition}\label{prop:general-reduction}
Let $G$ be a compact group and let $\alpha:G\to G$ have finite
shadowing. 
Then the restriction
$
\alpha\big|_{\bigcap_{n\geq 0}\alpha^n(G_0)}
$
has finite shadowing.
\end{proposition}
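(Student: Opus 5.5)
The plan is to chain the stable-image reduction with Corollary~\ref{surshad} and then identify the resulting group. Put $G_\infty=\bigcap_{m\geq0}\alpha^m(G)$ and $\alpha_\infty=\alpha|_{G_\infty}$.

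The steps, in order, are as follows.
\begin{enumerate}
\item Since $\alpha$ has finite shadowing, Lemma~\ref{lem:stable-image} (the necessity direction) shows that $\alpha_\infty$ has finite shadowing.
\item By Lemma~\ref{lem:stable-image-surjective}, $G_\infty$ is a compact group and $\alpha_\infty$ is a surjective endomorphism of it.
\item Corollary~\ref{surshad}, applied to the compact group $G_\infty$ and the surjective endomorphism $\alpha_\infty$, yields that $\alpha_\infty|_{(G_\infty)_0}$ has finite shadowing.
\item By Lemma~\ref{lem:stable-component}, $(G_\infty)_0=\bigcap_{n\geq0}\alpha^n(G_0)$. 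Hence $\alpha_\infty|_{(G_\infty)_0}$ coincides with the restriction of $\alpha$ to $\bigcap_{n\geq0}\alpha^n(G_0)$, which is exactly the assertion.
\end{enumerate}

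No step involves new estimates; the content lies entirely in the earlier results. The only point requiring care is that Corollary~\ref{surshad} relies on Lemma~\ref{lem:aut-component}, whose metrizable case is deferred. For the present proposition we simply accept that lemma as given, consistent with the paper's stated logical structure: the connected criterion is proved independently of both Theorem~A and this proposition. I would add one remark for clarity: the restriction in the statement is well defined because $\alpha$ maps $\bigcap_n\alpha^n(G_0)$ onto itself, which follows from Lemma~\ref{lem:stable-image-surjective} applied to $\alpha|_{G_0}$.
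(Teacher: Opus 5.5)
Your proposal is correct and is the paper's own argument. It uses Lemma~\ref{lem:stable-image} to pass finite shadowing to $G_\infty$, Lemma~\ref{lem:stable-image-surjective} for surjectivity, Corollary~\ref{surshad} to pass to $(G_\infty)_0$, and Lemma~\ref{lem:stable-component} to identify $(G_\infty)_0$ with $\bigcap_{n\geq0}\alpha^n(G_0)$, with the same reliance on the deferred Lemma~\ref{lem:aut-component}.
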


\begin{proof}
	Put $K:=G_\infty$. By Lemma~\ref{lem:stable-image},
	$\alpha|_K$ has finite shadowing, and it is surjective by
	Lemma~\ref{lem:stable-image-surjective}. Hence
Corollary~\ref{surshad} implies that
$\alpha|_{K_0}$ has finite shadowing. Finally,
Lemma~\ref{lem:stable-component} gives
$
K_0=\bigcap_{n\geq 0}\alpha^n(G_0).
$
\end{proof}

We can now complete the proof of Theorem~A.

\begin{theorem}\label{thmA}
 Let $G$ be a compact group and let
$\alpha\in \End(G)$. Then
\[
\alpha\text{ has shadowing}
\quad\Longleftrightarrow\quad
\alpha|_{G_0}\text{ has shadowing}.
\]
\end{theorem}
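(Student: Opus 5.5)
The sufficiency is exactly Proposition~\ref{prop:component-to-group}, combined with the equivalence between finite shadowing and shadowing on compact groups. So the plan concentrates on the necessity: assume $\alpha$ has shadowing and show that $\alpha|_{G_0}$ does.

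The idea is to apply Lemma~\ref{lem:stable-image} to the endomorphism $\alpha|_{G_0}$ of the compact group $G_0$, rather than to $\alpha$ on $G$.

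1. By Lemma~\ref{lem:stable-component}, the stable image of $\alpha|_{G_0}$ is $(G_0)_\infty=\bigcap_{n\geq0}\alpha^n(G_0)$, which equals $(G_\infty)_0$.
2. Lemma~\ref{lem:stable-image}, applied to $\alpha|_{G_0}$, says that $\alpha|_{G_0}$ has finite shadowing if and only if its restriction to $(G_0)_\infty$ has finite shadowing.
3. Proposition~\ref{prop:general-reduction}, applied to $\alpha$ (which has finite shadowing by hypothesis), gives precisely that $\alpha|_{\bigcap_{n\geq0}\alpha^n(G_0)}$ has finite shadowing.

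Combining steps 2 and 3, $\alpha|_{G_0}$ has finite shadowing, hence shadowing, since $G_0$ is compact.

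There are a few bookkeeping points to check.

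First, Lemma~\ref{lem:stable-image} was stated for an arbitrary compact group. We use it with $G_0$ in place of $G$, and only the "if" direction is needed. The neighbourhoods there are neighbourhoods of $G_0$, and the stable image is computed inside $G_0$. That stable image is $\bigcap_n \alpha^n(G_0)$, which matches the subgroup appearing in Proposition~\ref{prop:general-reduction}.

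Second, the restricted maps coincide: $\alpha|_{(G_0)_\infty}$ and $(\alpha|_{G_0})_\infty$ are the same endomorphism of the same compact group, so no uniformity issue arises.

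The only real difficulty lies upstream and has already been absorbed into Proposition~\ref{prop:general-reduction}. That proposition passes through Corollary~\ref{surshad} and hence through Lemma~\ref{lem:aut-component}, whose metrizable case is deferred to the end of the paper. The present proof is therefore complete modulo that deferred case. As the introduction notes, this causes no circularity, since Theorem~\ref{thm:connected-endo-shad} does not rely on Theorem~A.
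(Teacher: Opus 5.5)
Your proposal is correct and matches the paper's proof of Theorem~\ref{thmA}. Sufficiency comes from Proposition~\ref{prop:component-to-group}, and necessity combines Proposition~\ref{prop:general-reduction} with the ``if'' direction of Lemma~\ref{lem:stable-image} applied to $\alpha|_{G_0}$, whose stable image is $\bigcap_{n\geq0}\alpha^n(G_0)$; the proof is complete modulo the deferred metrizable case of Lemma~\ref{lem:aut-component}, exactly as you say.
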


\begin{proof}
By the equivalence between finite shadowing and shadowing, it suffices
to work with finite shadowing.

If $\alpha|_{G_0}$ has finite shadowing, then $\alpha$ has finite
shadowing by Proposition~\ref{prop:component-to-group}.

Conversely, suppose that $\alpha$ has finite shadowing. By
Proposition~\ref{prop:general-reduction}, the restriction of $\alpha$
to
$
\bigcap_{n\geq 0}\alpha^n(G_0)
$
has finite shadowing. This group is the stable image of
$\alpha|_{G_0}$. Lemma~\ref{lem:stable-image} therefore implies
that $\alpha|_{G_0}$ has finite shadowing.
\end{proof}

\section{Endomorphisms of connected compact groups}\label{thBC}

Throughout this section, $G$ denotes a connected compact
group and $\alpha:G\to G$ is a continuous endomorphism, except in the final subsection. Recall that again, we use the
equivalence between shadowing and finite shadowing on compact spaces
without further mention.

The purpose of this section is to characterize shadowing directly in
terms of the algebraic structure of $\alpha$. The abelian part is
described by the action induced on the Pontryagin dual, while the
semisimple part is described by an injective map on the index set of
the simple Lie factors.

By Lemma~\ref{lem:stable-image}, we have
\begin{equation}\label{eq:stable-image-equiv}
\alpha\text{ has shadowing}
\quad\Longleftrightarrow\quad
\alpha_\infty\text{ has shadowing}.
\end{equation}

Since every $\alpha^n(G)$ is compact and connected and the sequence
$(\alpha^n(G))_{n\geq0}$ is decreasing, the stable image $G_\infty$
is connected.
Thus the shadowing problem for an arbitrary endomorphism reduces to
the corresponding problem for a surjective endomorphism, since $\alpha_\infty$ is surjective. This
reduction is particularly useful because the connected central and
semisimple parts of $G_\infty$ are invariant under
$\alpha_\infty$.

\subsection{The connected abelian case}
Throughout this subsection, let $A$ be a connected compact abelian group and let
$\beta:A\to A$ be an endomorphism. Put
\[
\Gamma:=\widehat A
\qquad\text{and}\qquad
T:=\widehat\beta:\Gamma\to\Gamma.
\]
Since $A$ is connected, $\Gamma$ is torsion-free. Let
\[
V:=\Gamma\otimes_{\mathbb Z}\mathbb Q
\]
and denote by $T_{\mathbb Q}:V\to V$ the $\mathbb Q$-linear map induced by $T$.

\begin{proposition}\label{prop:finite-dim}
If $A$ is finite-dimensional, or equivalently, if $\Gamma$ is of
finite rank, then $\beta$ has shadowing if and only if
$T_{\mathbb Q}$ is hyperbolic.
\end{proposition}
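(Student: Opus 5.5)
Let $n=\dim_{\mathbb Q}V<\infty$. The plan is to reduce, by shadowing-preserving operations, to a linear endomorphism of a finite-dimensional solenoid and then argue with the eigenvalue splitting. First, by \eqref{eq:stable-image-equiv} (Lemma~\ref{lem:stable-image}) we may pass to the stable image $A_\infty$. Dually, $\widehat{A_\infty}$ is the quotient of $\Gamma$ by the $T$-eventually-nilpotent part modulo torsion. Rationally, this corresponds to dividing $V$ by the generalized $0$-eigenspace of $T_{\mathbb Q}$, and deleting eigenvalue $0$ does not affect hyperbolicity. Hence we may assume that $\beta$ is surjective and that $T$ is injective, so $T_{\mathbb Q}$ is invertible.

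Next we compare with the full solenoid. Choose a free subgroup $\Lambda=\mathbb Z^n\le\Gamma$ of full rank. The $T$-invariant saturation is awkward, so instead we use that $\Gamma\subseteq V=\mathbb Q^n$ and realize $A$ as a quotient of the ``adelic'' solenoid $\Sigma=(\mathbb R^n\times\prod'_p\mathbb Q_p^n)/\mathbb Q^n$, whose dual is $\mathbb Q^n$. More concretely, the inclusion $\Gamma\hookrightarrow V$ dualizes to a surjection $\widehat V\to A$ with totally disconnected kernel (dual to the torsion group $V/\Gamma$), intertwining the automorphism $\widehat{T_{\mathbb Q}}$ with $\beta$. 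Lemma~\ref{lem:extension-shadowing}, together with Example~\ref{examples}(2) applied to the kernel and Lemma~\ref{lem:quotient-shadowing}, shows that shadowing passes between $\widehat V$ and $A$ in both directions. For necessity, the natural-extension/inverse-limit argument gives a surjection from the natural extension of $\beta$ onto $\widehat V$ with profinite kernel, and Lemma~\ref{lem:natural-extension-shadowing} combined with Lemma~\ref{lem:quotient-shadowing} handles this direction. So it suffices to treat $\widehat{T_{\mathbb Q}}$ on $\widehat V$.

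Now split $V=V_s\oplus V_c\oplus V_u$ over $\mathbb Q$ according to the factors of the characteristic polynomial whose roots lie inside, on, or outside the unit circle. These factors are not rational in general, so instead we split by $\mathbb Q$-irreducible factors $f$ of the characteristic polynomial. Each primary component is $\widehat{T_{\mathbb Q}}$-invariant and the dual is a product, so shadowing holds iff it holds on each factor.

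For a primary component with no root on $\mathbb S^1$, $\widehat{V_f}$ is an extension of a totally disconnected group by the real torus-like part. More precisely, the real part $\mathbb R^k$ acted on by a hyperbolic linear map lifts pseudo-orbits, and the standard Anosov shadowing on $\mathbb R^k$ descends modulo the discrete lattice-like group $\mathbb Q^k$ embedded diagonally. The $p$-adic parts are totally disconnected and are absorbed by Lemma~\ref{lem:extension-shadowing}. The cleanest route is to use the finite-dimensional Aoki theorem for solenoid automorphisms \cite{AokiSolenoid}, which is exactly the hyperbolic-iff-shadowing statement for automorphisms, and which after the above reductions we may cite directly.

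For a component with a root $\lambda$ with $|\lambda|=1$, we must show failure. Here $f$ has a root on the circle, and the real part of $\widehat{V_f}$ contains a $2$-dimensional (or $1$-dimensional) invariant subspace on which the map is an isometry or unipotent. A slowly drifting pseudo-orbit, with increments $\delta$ along that rotation, stays $\delta$-close but moves distance $\gg\varepsilon$ away from every true orbit. The quotient map to the torus-like factor is injective on small real neighbourhoods, so it cannot be corrected by the totally disconnected directions.

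The main obstacle is this last step, together with the necessity reduction. Ruling out shadowing requires controlling the $p$-adic coordinates: a true orbit might approximate the pseudo-orbit using $p$-adic freedom. I would handle it by quotienting by the maximal compact totally disconnected invariant subgroup, or simply by invoking Aoki's theorem once the reduction to an automorphism of a finite-dimensional solenoid is complete.
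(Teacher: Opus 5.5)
Your argument is correct once Aoki's theorem is invoked, and its core matches the paper's. You lift through the surjection $\widehat V\to A$ dual to $\Gamma\hookrightarrow V$, whose kernel is totally disconnected. You transfer shadowing in both directions with Lemmas~\ref{lem:extension-shadowing} and~\ref{lem:quotient-shadowing}. You then apply Aoki's theorem for automorphisms of finite-dimensional solenoids.

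Where you differ is in how you dispose of the eigenvalue $0$:
\begin{itemize}
\item You first pass to the stable image via Lemma~\ref{lem:stable-image}. Its dual is $\Gamma/\bigcup_m\ker T^m$, which is already torsion-free, so the ``modulo torsion'' is unnecessary. Its rationalization is $V/V_0$, with $V_0$ the generalized $0$-eigenspace, so $T_{\mathbb Q}$ is invertible before you lift.
\item The paper lifts first and uses the Fitting splitting $\mathbb Q^n=\ker T_{\mathbb Q}^m\oplus\operatorname{im}T_{\mathbb Q}^m$. This writes $\widehat{\mathbb Q}^{\,n}$ as a product of a nilpotent factor, shown directly to have shadowing, and a solenoid automorphism.
\end{itemize}
Your route reuses a lemma already available and avoids the nilpotent computation. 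The paper's stays at the level of the solenoid and shows explicitly why $0$ is harmless. The primary decomposition over irreducible factors and the heuristic about drifting pseudo-orbits are superfluous once Aoki's theorem is applied to the whole automorphism of $\widehat V$.

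One side remark is wrong and should be deleted. The natural extension of $\beta$ is dual to $\varinjlim(\Gamma,T)\cong\bigcup_k T_{\mathbb Q}^{-k}\Gamma\subseteq V$. So the surjection runs from $\widehat V$ onto the natural extension, not the reverse. This union is generally not all of $V$; for example, $\Gamma=\mathbb Z$, $T=2$ gives $\mathbb Z[1/2]$. With the correct direction you would need Lemma~\ref{lem:extension-shadowing}, not Lemma~\ref{lem:quotient-shadowing}, to go from the natural extension up to $\widehat V$. Since your preceding sentence already gives both directions, this paragraph is redundant and removing it leaves the proof complete.
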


\begin{proof}
Let $n=\dim A=\operatorname{rank}(\Gamma)$. Identifying
$V:=\Gamma\otimes_{\mathbb Z}\mathbb Q$ with $\mathbb Q^n$, we regard
$\Gamma$ as a subgroup of $\mathbb Q^n$. The inclusion
$\Gamma\hookrightarrow\mathbb Q^n$ dualizes to a continuous
surjective homomorphism
\[
\pi:\widehat{\mathbb Q}^{\,n}\longrightarrow A.
\]
Its kernel $N$ is Pontryagin dual to $\mathbb Q^n/\Gamma$. Since
$\Gamma$ has rank $n$, the group $\mathbb Q^n/\Gamma$ is torsion.
Consequently, $N$ is totally disconnected.

The homomorphism $T:\Gamma\to\Gamma$ extends uniquely to the rational
linear endomorphism $T_{\mathbb Q}:\mathbb Q^n\to\mathbb Q^n$. Let
$\widetilde\beta:\widehat{\mathbb Q}^{\,n}\to
\widehat{\mathbb Q}^{\,n}$ be its Pontryagin dual. Since
$T_{\mathbb Q}|_\Gamma=T$, we have
$\pi\circ\widetilde\beta=\beta\circ\pi$. In particular,
$\widetilde\beta(N)\subseteq N$.

The restriction $\widetilde\beta|_N$ has shadowing, since $N$ is
compact and totally disconnected. It therefore follows from
Lemma~\ref{lem:extension-shadowing} and
Lemma~\ref{lem:quotient-shadowing} that
\[
\beta\text{ has shadowing}
\quad\Longleftrightarrow\quad
\widetilde\beta\text{ has shadowing}.
\]

We now apply the Fitting decomposition to $T_{\mathbb Q}$.

For all sufficiently large $m$, the Fitting decomposition gives
\[
\mathbb Q^n
=
\ker T_{\mathbb Q}^m
\oplus
\operatorname{im}T_{\mathbb Q}^m.
\]
Fix such an $m$ and put 
\[
\mathbb Q^n
=
V_0\oplus V_1,
\qquad
V_0:=\ker T_{\mathbb Q}^m,
\qquad
V_1:=\operatorname{im}T_{\mathbb Q}^m.
\]
Both $V_0$ and $V_1$ are $T_{\mathbb Q}$-invariant,
$T_{\mathbb Q}|_{V_0}$ is nilpotent, and
$T_{\mathbb Q}|_{V_1}$ is an automorphism. Accordingly,
\[
\widehat{\mathbb Q}^{\,n}
\cong
\widehat{V_0}\times\widehat{V_1},
\]
and, under this identification,
$\widetilde\beta=\widetilde\beta_0\times\widetilde\beta_1$, where
$\widetilde\beta_i$ is dual to $T_{\mathbb Q}|_{V_i}$.

The endomorphism $\widetilde\beta_0$ has shadowing. Indeed, it is
nilpotent. More generally, if $\varphi^m(x)=e$ for every $x$, then $\varphi$ has finite shadowing. To see this,
given an identity neighbourhood $U$, choose an identity neighbourhood
$W$ such that
\[
\varphi^{r-1}(W)\varphi^{r-2}(W)\cdots\varphi(W)W
\subseteq U
\]
for every $1\leq r\leq m$. If $x_0,\ldots,x_k$ is a finite
$W$-pseudo-orbit and
$e_i:=\varphi(x_i)^{-1}x_{i+1}$, then
\[
\varphi^j(x_0)^{-1}x_j
=
\varphi^{j-1}(e_0)\varphi^{j-2}(e_1)\cdots e_{j-1}.
\]
Since $\varphi^m$ is trivial, the product on the right has at most
$m$ nontrivial factors and belongs to $U$. Thus $x_0$ shadows the
pseudo-orbit.

On the other hand, $\widetilde\beta_1$ is an automorphism of the
finite-dimensional compact connected abelian group
$\widehat{V_1}$. By Aoki's theorem \cite[Theorem 2]{AokiSolenoid},
$\widetilde\beta_1$ has shadowing if and only if
$T_{\mathbb Q}|_{V_1}$ is hyperbolic.

Since finite products preserve shadowing, and shadowing passes to
factors, we obtain
\[
\widetilde\beta\text{ has shadowing}
\quad\Longleftrightarrow\quad
\widetilde\beta_1\text{ has shadowing}.
\]
The only eigenvalue of $T_{\mathbb Q}|_{V_0}$ is $0$. Hence
\[
\operatorname{Spec}(T_{\mathbb Q})\cap\mathbb S^1
=
\operatorname{Spec}(T_{\mathbb Q}|_{V_1})\cap\mathbb S^1.
\]
Combining the preceding equivalences proves that $\beta$ has
shadowing if and only if $T_{\mathbb Q}$ is hyperbolic.
\end{proof}

\begin{remark}
Aoki proved Proposition~\ref{prop:finite-dim} for
automorphisms in \cite[Theorem~2]{AokiSolenoid}. In that case,
$T_{\mathbb Q}$ is invertible, so the nilpotent part in the Fitting
decomposition is trivial. The proposition above extends Aoki's result
to arbitrary endomorphisms; the only additional spectral possibility
is the eigenvalue $0$, which does not obstruct shadowing.
\end{remark}

\begin{theorem}\label{thm:abelian-endo-shad}
Let $A$ be a connected compact abelian group and let
$\beta:A\to A$ be an endomorphism. Then the following conditions are
equivalent:
\begin{enumerate}
\item $\beta$ has shadowing;
\item $T_{\mathbb Q}|_W$ is hyperbolic for every finite-dimensional
$T_{\mathbb Q}$-invariant subspace $W\leq V$.
\end{enumerate}
\end{theorem}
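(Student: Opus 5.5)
We prove (1)$\Rightarrow$(2) by passing to finite-dimensional solenoidal factors, and (2)$\Rightarrow$(1) by decomposing $V$ as a $\mathbb Q[u]$-module into an algebraic part, handled by Proposition~\ref{prop:finite-dim}, and a torsion-free part assembled from full shifts.

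\emph{Necessity.} Let $W\leq V$ be a finite-dimensional $T_{\mathbb Q}$-invariant subspace and put $\Gamma_W:=\Gamma\cap W$. This is a $T$-invariant subgroup of $\Gamma$ with $\Gamma_W\otimes\mathbb Q=W$, because every element of $W$ has an integer multiple in $\Gamma$. Moreover $\Gamma/\Gamma_W$ is torsion-free, since $W$ is a $\mathbb Q$-subspace. Dually, the inclusion $\Gamma_W\hookrightarrow\Gamma$ gives a continuous surjection $A\to\widehat{\Gamma_W}$. Its target is a finite-dimensional compact connected abelian group, and the surjection intertwines $\beta$ with the dual $\beta_W$ of $T|_{\Gamma_W}$. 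By Lemma~\ref{lem:quotient-shadowing}, which applies since surjections of compact groups are open, $\beta_W$ has shadowing. Proposition~\ref{prop:finite-dim} then shows that $T_{\mathbb Q}|_W$ is hyperbolic.

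\emph{Sufficiency.} Regard $V$ as a $\mathbb Q[u]$-module and set $\Gamma_{\mathrm{alg}}:=\Gamma\cap V_{\mathrm{alg}}$. This is a pure, $T$-invariant subgroup, so the sequence
\[
0\to\Gamma_{\mathrm{alg}}\to\Gamma\to\Gamma/\Gamma_{\mathrm{alg}}\to0
\]
consists of torsion-free groups. Dually, $N:=\Gamma_{\mathrm{alg}}^{\perp}$ is a closed $\beta$-invariant connected subgroup of $A$ with $\widehat N\cong\Gamma/\Gamma_{\mathrm{alg}}$ and $\widehat{A/N}\cong\Gamma_{\mathrm{alg}}$. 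By Lemma~\ref{lem:extension-shadowing}, it suffices to prove shadowing on $N$ and on $A/N$.

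For $A/N$, note that $V_{\mathrm{alg}}$ is the directed union of its finite-dimensional invariant subspaces $W$, namely the submodules generated by finitely many elements. Hence $\Gamma_{\mathrm{alg}}=\bigcup_W(\Gamma\cap W)$, and $A/N=\varprojlim_W\widehat{\Gamma\cap W}$ with surjective bonding maps. Each factor has shadowing by (2) together with Proposition~\ref{prop:finite-dim}. Theorem~\ref{thm:inverse-limit-shadowing} then gives shadowing on $A/N$.

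For $N$, put $M:=V/V_{\mathrm{alg}}$. This is a torsion-free $\mathbb Q[u]$-module, and the goal is to show that the dual of $\Gamma/\Gamma_{\mathrm{alg}}$ with the induced map has shadowing. The model case is a free module $\mathbb Q[u]e$ containing a lattice $\mathbb Z[u]e$. The dual of $\mathbb Z[u]\cong\bigoplus_{n\geq0}\mathbb Z u^n$ with multiplication by $u$ is $\mathbb T^{\mathbb N}$ with the one-sided shift $\sigma_+$, which has shadowing by Example~\ref{examples}(1). The general plan has three steps.
\begin{itemize}
\item Write $\Gamma/\Gamma_{\mathrm{alg}}$ as a directed union of finitely generated $\mathbb Z[u]$-submodules $\Lambda$, each generated by a finite set of elements $\lambda_1,\dots,\lambda_k$.
\item Each such $\Lambda$ is contained in a finitely generated torsion-free $\mathbb Q[u]$-submodule, which is free over the PID $\mathbb Q[u]$. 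For suitable generators, the $\mathbb Z[u]$-span $F=\bigoplus\mathbb Z[u]f_j$ satisfies $F\subseteq\Lambda$ with $(\Lambda\cap\mathbb Q[u]F)/F$ torsion; here we use $\mathbb Z$-multiples of a $\mathbb Q[u]$-basis.
\item Dually, $\widehat\Lambda$ is an extension of $\widehat{F}\cong(\mathbb T^{\mathbb N})^k$, a product of one-sided full shifts, by a group with torsion dual, which is totally disconnected. Applying Lemma~\ref{lem:extension-shadowing}, Lemma~\ref{lem:quotient-shadowing} and Example~\ref{examples}(2), as in the proof of Proposition~\ref{prop:finite-dim}, we conclude that $\widehat\Lambda$ has shadowing.
\end{itemize}
Finally, $N=\varprojlim_\Lambda\widehat\Lambda$ with surjective bonding maps, so Theorem~\ref{thm:inverse-limit-shadowing} finishes the argument.

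The main obstacle is the third step. The quotient $\Lambda/F$ is torsion as an abelian group only if $\Lambda\subseteq\mathbb Q\otimes F$, and $\mathbb Q\otimes F=\bigoplus\mathbb Q[u]f_j$ as an abelian group. So we must arrange that $\Lambda$ lies in the $\mathbb Q$-span of $\mathbb Z[u]$-combinations of the $f_j$. Since $\Lambda$ is generated over $\mathbb Z[u]$ by $\lambda_1,\dots,\lambda_k$, it suffices to choose the $f_j$ so that each $\lambda_i\in\mathbb Q[u]F$. This holds whenever the $\lambda_i$ lie in the free module $\mathbb Q[u]F$ itself, so we take $\mathbb Q[u]F$ to be a free $\mathbb Q[u]$-module containing all the $\lambda_i$ and clear denominators in the basis. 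With that choice, $\widehat\Lambda$ sits between $(\mathbb T^{\mathbb N})^k$ and a totally disconnected kernel, which is exactly the situation handled in the proof of Proposition~\ref{prop:finite-dim}.
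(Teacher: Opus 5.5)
Your proof is correct. Its overall structure is the paper's: necessity via the solenoidal factors $\widehat{\Gamma\cap W}$, the split $0\to\Gamma_{\mathrm{alg}}\to\Gamma\to\Gamma/\Gamma_{\mathrm{alg}}\to0$, and the algebraic part as an inverse limit of hyperbolic finite-dimensional factors. The real difference is in how you handle the non-algebraic part.

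The paper works with $\Lambda_E=M_E\cap(\Gamma/\Gamma_{\mathrm{alg}})$, where $M_E\cong\mathbb Q[u]^{r_E}$, and dualizes the inclusion $\Lambda_E\hookrightarrow M_E$. This makes $\widehat{\Lambda_E}$ a factor of the one-sided shift on $(\widehat{\mathbb Q}^{\,r_E})^{\mathbb N}$. So only Lemma~\ref{lem:quotient-shadowing} is needed, and neither finite generation over $\mathbb Z[u]$ nor a lattice choice enters.

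You stay integral instead. For a finitely generated $\mathbb Z[u]$-submodule $\Lambda$, you pick a free lattice $F\subseteq\Lambda$ with $\Lambda/F$ torsion. Then $\widehat\Lambda$ is an extension of the torus shift $((\mathbb T^r)^{\mathbb N},\sigma_+)$ by the profinite group $F^\perp\cong\widehat{\Lambda/F}$. Example~\ref{examples}(2) and Lemma~\ref{lem:extension-shadowing} handle this exactly as in Proposition~\ref{prop:finite-dim}. Your route avoids solenoidal alphabets and the explicit dual computation over $\widehat{\mathbb Q}$, at the cost of one extra extension step.

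In fact, since $\Lambda$ is finitely generated, you also have $m\Lambda\subseteq F$ for some integer $m\neq0$. Hence $\widehat\Lambda$ is already a factor of $\widehat{m^{-1}F}\cong(\mathbb T^r)^{\mathbb N}$, which is the integral version of the paper's quotient argument.

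One step in your last paragraph should be stated more precisely. The free module you take must be exactly $\mathbb Q\Lambda=\mathbb Q[u]\lambda_1+\cdots+\mathbb Q[u]\lambda_k$, which is free because $V/V_{\mathrm{alg}}$ is $\mathbb Q[u]$-torsion-free. Only then does each basis vector $g_j$ have a nonzero integer multiple $f_j=m_jg_j$ lying in $\Lambda$. If you took a strictly larger free module $P$ containing the $\lambda_i$, then $\mathbb QF=P\not\subseteq\mathbb Q\Lambda$, so $F\subseteq\Lambda$ would be impossible.

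With the correct choice, $\mathbb QF=\mathbb Q\Lambda\supseteq\Lambda$ gives the torsion quotient. Note also that the number of shift factors is the $\mathbb Q[u]$-rank $r\leq k$, not $k$.
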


\begin{proof}
Assume first that $\beta$ has shadowing, and let $W\leq V$ be a
finite-dimensional $T_{\mathbb Q}$-invariant subspace. Put
$\Delta:=W\cap\Gamma$. Then $\Delta$ is a $T$-invariant subgroup of
$\Gamma$. Since $W$ is a finite-dimensional rational subspace of
$\Gamma\otimes_{\mathbb Z}\mathbb Q$, the subgroup
$\Delta=W\cap\Gamma$ has finite rank and spans $W$ over $\mathbb Q$.
Thus
$
\Delta\otimes_{\mathbb Z}\mathbb Q=W.
$

The inclusion $\Delta\hookrightarrow\Gamma$ dualizes to an equivariant
surjective homomorphism $A\to\widehat\Delta$. Hence the endomorphism
induced by $\beta$ on the finite-dimensional connected compact abelian group
$\widehat\Delta$ has shadowing. By Proposition \ref{prop:finite-dim}, $T_{\mathbb Q}|_W$ has no eigenvalue on the unit circle.

Conversely, assume that every finite-dimensional
$T_{\mathbb Q}$-invariant subspace of $V$ is hyperbolic. Regard $V$ as
a module over $\mathbb Q[u]$ by putting $u\cdot v=T_{\mathbb Q}v$.
Define its algebraic part by
\begin{equation*}
V_{\mathrm{alg}}
:=
\{v\in V:p(T_{\mathbb Q})v=0
\text{ for some }0\neq p\in\mathbb Q[u]\},
\end{equation*}
and put $\Gamma_{\mathrm{alg}}:=\Gamma\cap V_{\mathrm{alg}}$.

We have
\[
\Gamma_{\mathrm{alg}}\otimes_{\mathbb Z}\mathbb Q
=
V_{\mathrm{alg}}.
\]
Indeed, let $v\in V_{\mathrm{alg}}$. Choose a nonzero integer $m$
such that $mv\in\Gamma$. If
$p(T_{\mathbb Q})v=0$ for some nonzero $p\in\mathbb Q[u]$, then,
after clearing denominators,
$q(T)(mv)=0$ for some nonzero $q\in\mathbb Z[u]$. Hence
$mv\in\Gamma_{\mathrm{alg}}$, and therefore
$v\in\Gamma_{\mathrm{alg}}\otimes_{\mathbb Z}\mathbb Q$. The other inclusion is obvious.

We now consider the system dual to $\Gamma_{\mathrm{alg}}$. Let $\mathcal F$ be the family of all finite subsets of $\Gamma_{\mathrm{alg}}$ and for each $F\in \mathcal F$, let $W_F$ be the $\mathbb Q[u]$-submodule of
$V_{\mathrm{alg}}$ generated by $F$, and
put $\Delta_F:=W_F\cap\Gamma_{\mathrm{alg}}$.

Since every generator of $W_F$ is annihilated by a nonzero polynomial,
$W_F$ is finite-dimensional over $\mathbb Q$. The induced
endomorphism on $\widehat{\Delta_F}$ therefore has shadowing by the
finite-dimensional solenoidal criterion Proposition \ref{prop:finite-dim} and the hypothesis on the
spectrum.

Order $\mathcal F$ by inclusion. If $F\subseteq F'$, then
$\Delta_F\subseteq\Delta_{F'}$, and the inclusion dualizes to a
surjective equivariant homomorphism
$
\widehat{\Delta_{F'}}
\longrightarrow
\widehat{\Delta_F}.
$
Since
$
\Gamma_{\mathrm{alg}}
=
\bigcup_{F\in\mathcal F}\Delta_F,
$
Pontryagin duality gives
$
\widehat{\Gamma_{\mathrm{alg}}}
\cong
\varprojlim_{F\in\mathcal F}\widehat{\Delta_F}.
$
All bonding maps are surjective. Hence
Theorem~\ref{thm:inverse-limit-shadowing} implies that the induced
endomorphism on $\widehat{\Gamma_{\mathrm{alg}}}$ has shadowing.

Now put $\Lambda:=\Gamma/\Gamma_{\mathrm{alg}}$. Its rationalization
is naturally isomorphic to $V/V_{\mathrm{alg}}$, which is torsion-free
as a $\mathbb Q[u]$-module. Indeed, if
$p(T_{\mathbb Q})(v+V_{\mathrm{alg}})=0$, then
$p(T_{\mathbb Q})v\in V_{\mathrm{alg}}$, and hence
$q(T_{\mathbb Q})\big(p(T_{\mathbb Q})v\big)=0$ for some nonzero
$q\in\mathbb Q[u]$. Thus $v\in V_{\mathrm{alg}}$.

Again, we let $\mathcal E$ be the set of finite subsets of $\Lambda$ and for each $E\in \mathcal E$, let $M_E$ be the $\mathbb Q[u]$-submodule of
$\Lambda\otimes\mathbb Q$ generated by $E$, and put
$\Lambda_E:=M_E\cap\Lambda$. Since $\mathbb Q[u]$ is a principal ideal
domain and $M_E$ is finitely generated and torsion-free, there is an
integer $r_E$ such that
\[
M_E\cong\mathbb Q[u]^{r_E}.
\]

The dual of multiplication by $u$ on $\mathbb Q[u]^{r_E}$ is a
one-sided full shift on
$(\widehat{\mathbb Q}^{\,r_E})^{\mathbb N}$, and hence has shadowing.
Indeed, as an abelian group, one has
\[
\mathbb Q[u]^{r_E}
=
\bigoplus_{j\geq 0}\mathbb Q^{r_E}u^j.
\]
Therefore, by Pontryagin duality,
\[
\widehat{\mathbb Q[u]^{r_E}}
\cong
\prod_{j\geq 0}\widehat{\mathbb Q^{r_E}}
\cong
\bigl(\widehat{\mathbb Q}^{\,r_E}\bigr)^{\mathbb N}.
\]
More explicitly, every element of $\mathbb Q[u]^{r_E}$ can be written
uniquely in the form
\[
p(u)=\sum_{j\geq 0}a_j u^j,
\qquad a_j\in\mathbb Q^{r_E},
\]
where all but finitely many $a_j$ are zero. Under the above
identification, a point
$x=(x_0,x_1,\ldots)\in
(\widehat{\mathbb Q}^{\,r_E})^{\mathbb N}$
corresponds to the character $\chi_x$ defined by
\[
\chi_x(p)
=
\prod_{j\geq 0}x_j(a_j)\in \mathbb T.
\]

Let $m_u:\mathbb Q[u]^{r_E}\to\mathbb Q[u]^{r_E}$ denote
multiplication by $u$. Its dual map satisfies
\[
\widehat{m_u}(\chi_x)(p)
=
\chi_x(up).
\]
Since
$
up(u)=\sum_{j\geq 0}a_j u^{j+1},
$
we obtain
$
\chi_x(up)
=
\prod_{j\geq 0}x_{j+1}(a_j).
$
Thus, under the identification above,
\[
\widehat{m_u}(x_0,x_1,x_2,\ldots)
=
(x_1,x_2,x_3,\ldots).
\]
Hence the dual of multiplication by $u$ on
$\mathbb Q[u]^{r_E}$ is precisely the one-sided full shift on
$\bigl(\widehat{\mathbb Q}^{\,r_E}\bigr)^{\mathbb N}$.

The inclusion $\Lambda_E\hookrightarrow M_E$ dualizes to an
equivariant surjective homomorphism
$\widehat{M_E}\to\widehat{\Lambda_E}$. It follows from Lemma \ref{lem:quotient-shadowing} that the induced
endomorphism on $\widehat{\Lambda_E}$ has shadowing.

Order $\mathcal E$ by inclusion. If $E\subseteq E'$, then
$M_E\subseteq M_{E'}$ and hence
$\Lambda_E\subseteq\Lambda_{E'}$. Moreover,
$
\Lambda=\bigcup_{E\in\mathcal E}\Lambda_E.
$
Therefore
$
\widehat\Lambda
\cong
\varprojlim_{E\in\mathcal E}\widehat{\Lambda_E},
$
where all bonding maps are surjective and equivariant. Theorem~
\ref{thm:inverse-limit-shadowing} now implies that the induced
endomorphism on $\widehat\Lambda$ has shadowing.

Finally, the exact sequence
\[
0\longrightarrow\Gamma_{\mathrm{alg}}
\longrightarrow\Gamma
\longrightarrow\Lambda
\longrightarrow 0
\]
dualizes to an equivariant exact sequence
\[
1\longrightarrow\widehat\Lambda
\longrightarrow A
\longrightarrow\widehat{\Gamma_{\mathrm{alg}}}
\longrightarrow 1.
\]
Both the restriction to $\widehat\Lambda$ and the induced
endomorphism on $\widehat{\Gamma_{\mathrm{alg}}}$ have shadowing.
Lemma~\ref{lem:extension-shadowing} therefore implies that $\beta$ has
shadowing.
\end{proof}

\subsection{Surjective endomorphisms of semisimple compact groups}

We first describe the form of a surjective endomorphism on direct products of simple connected compact Lie groups.

\begin{lemma}\label{lem:semisimple-index-map}
Let
$
P:=\prod_{i\in I}S_i,
$
where every $S_i$ is a centre-free simple connected compact Lie group,
and let $\beta:P\to P$ be a surjective endomorphism. Then there exist
an injective map $\tau:I\to I$ and isomorphisms
$\theta_i:S_{\tau(i)}\to S_i$ such that
\begin{equation}\label{eq:coordinate-endo}
p_i\circ\beta=\theta_i\circ p_{\tau(i)}
\end{equation}
for every $i\in I$, where $p_i:P\to S_i$ denotes the coordinate
projection.
\end{lemma}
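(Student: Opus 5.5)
The plan is to understand $\beta$ through the closed normal subgroups of $P$, since a surjective endomorphism relates kernels and images of normal subgroups in a controlled way.

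\textbf{Step 1: normal subgroups of $P$.} I would first show that every closed connected normal subgroup of $P$ has the form
\[
P_J:=\prod_{j\in J}S_j\times\prod_{i\notin J}\{e\}
\]
for some $J\subseteq I$. Let $N$ be such a subgroup. Each $p_i(N)$ is normal in $S_i$. Since $S_i$ is simple and centre-free, $[N,S_i]$ is either trivial or all of $S_i$. If it is trivial, then $p_i(N)$ is central in $S_i$, hence trivial. So let $J=\{i:p_i(N)=S_i\}$. For $i\in J$ we get $S_i=[N,S_i]\subseteq N$, and $N\subseteq P_J$. The closure of the group generated by the $S_j$ with $j\in J$ is $P_J$, so $N=P_J$.

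\textbf{Step 2: kernels of the coordinates.} Fix $i$ and set $M_i:=\ker(p_i\circ\beta)$. Since $\beta$ is surjective, $p_i\circ\beta:P\to S_i$ is surjective, so $P/M_i\cong S_i$.
\begin{itemize}
\item[(a)] $M_i\supseteq P_{I\setminus\{k\}}$ for exactly one index $k$. Indeed, $p_i\circ\beta$ is nontrivial on some factor $S_k$. For $l\neq k$, the image of $S_l$ commutes with the image of $S_k$, which is a nontrivial normal subgroup of $S_i$ and hence all of $S_i$. So the image of $S_l$ lies in $Z(S_i)$, which is trivial. By density of the restricted product, $p_i\circ\beta$ factors through $p_k$.
\item[(b)] The induced map $\theta_i:S_k\to S_i$ is a surjective homomorphism whose kernel is a proper normal subgroup of the simple group $S_k$, hence trivial since $S_k$ is centre-free. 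Thus $\theta_i$ is an isomorphism.
\end{itemize}
Defining $\tau(i):=k$ gives the identity \eqref{eq:coordinate-endo}.

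\textbf{Step 3: injectivity of $\tau$.} I expect this to be the main obstacle. Suppose $\tau(i)=\tau(i')=k$ with $i\neq i'$. Then
\[
p_i\circ\beta=\theta_i\circ p_k,\qquad p_{i'}\circ\beta=\theta_{i'}\circ p_k,
\]
so every point in the image of $\beta$ satisfies
\[
y_{i'}=\theta_{i'}\theta_i^{-1}(y_i).
\]
This is a proper closed condition on $P$. For instance, the point with $y_i=e$ and $y_{i'}\neq e$ fails it. This contradicts the surjectivity of $\beta$, so $\tau$ is injective. The argument should be checked carefully at this point, because surjectivity is used essentially here.

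Continuity of $\beta$ and of the $p_i$ is used throughout. No use of the Lie structure is needed beyond simplicity and the triviality of the centre.
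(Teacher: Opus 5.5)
Your proof is correct and follows the paper's argument. Like the paper, you fix $i$ and use commuting images together with the trivial centre to show that exactly one factor $S_k$ survives under $p_i\circ\beta$, then check that the induced map $S_k\to S_i$ is an isomorphism, and get injectivity of $\tau$ from the graph obstruction to surjectivity. The only differences are cosmetic: you use density of the restricted product where the paper uses the no-small-subgroups property to make $p_i\circ\beta$ depend on finitely many coordinates, and your Step 1 classification of normal subgroups is never actually used, so it can be dropped.
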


\begin{proof}
Fix $i\in I$ and put $\rho_i:=p_i\circ\beta$. Since $S_i$ is a Lie
group, it has the no-small-subgroups property. It follows from the
continuity of $\rho_i$ that $\rho_i$ depends on only finitely many
coordinates.

For each $j\in I$, the image under $\rho_i$ of the $j$-th factor is a
connected normal subgroup of $S_i$. Since $S_i$ is simple, this image
is either trivial or equal to $S_i$. Moreover, the images of two
distinct factors commute. Since $\rho_i$ is surjective and $S_i$ is
nonabelian, exactly one factor has nontrivial image.

Denote this factor by $S_{\tau(i)}$. The restriction of $\rho_i$ to
$S_{\tau(i)}$ is a surjective homomorphism onto $S_i$. Its kernel is
a closed normal subgroup of the centre-free simple group
$S_{\tau(i)}$, and hence it is trivial. Thus this restriction is an
isomorphism, which we denote by $\theta_i$.

It remains to show that $\tau$ is injective. Suppose that
$\tau(i)=\tau(j)$ for distinct $i,j\in I$. Then the projection of
$\beta(P)$ onto $S_i\times S_j$ is contained in the graph of an
isomorphism between $S_i$ and $S_j$, and hence cannot be the whole
product. This contradicts the surjectivity of $\beta$.
\end{proof}

For an injective map $\tau:I\to I$, each component of its functional
graph is one of the following:
\begin{enumerate}
\item a finite cycle;
\item a copy of $\mathbb Z$;
\item a copy of $\mathbb N$.
\end{enumerate}
The second type gives a two-sided shift, while the third type gives a
one-sided shift.

We shall also use the following elementary observation, which is folklore. For the reader's convenience, we provide a proof here.

\begin{lemma}\label{lem:iso-no-shad}
Let $X$ be a nontrivial compact connected metric space and let
$f:X\to X$ be an isometry. Then $f$ does not have finite shadowing.
\end{lemma}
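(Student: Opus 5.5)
The plan is to build an explicit pseudo-orbit that drifts slowly across $X$, using connectedness, and then show that no true orbit can follow it, using the isometry. Fix two points $a,b\in X$ with $d(a,b)=D>0$, which exist since $X$ is nontrivial. Put $\varepsilon:=D/4$. We will show that for every $\delta>0$ there is a finite $\delta$-pseudo-orbit that no point $\varepsilon$-shadows. The metric formulation is legitimate here by the remarks following Definition~\ref{def:shadowing}; on a compact metric space, the $\varepsilon$-$\delta$ version is equivalent to the entourage version.

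\textbf{Constructing the pseudo-orbit.} Since $X$ is connected, for every $\eta>0$ there is an $\eta$-chain $a=c_0,c_1,\dots,c_k=b$ with $d(c_j,c_{j+1})<\eta$. This is the standard fact that the set of points $\eta$-chain-reachable from $a$ is open and closed. Take $\eta=\delta$ and set
\[
x_j:=f^j(c_j),\qquad 0\le j\le k.
\]
Then
\[
d\bigl(f(x_j),x_{j+1}\bigr)=d\bigl(f^{j+1}(c_j),f^{j+1}(c_{j+1})\bigr)=d(c_j,c_{j+1})<\delta,
\]
because $f$, and hence every $f^{j+1}$, is an isometry. So $(x_j)_{0\le j\le k}$ is a finite $\delta$-pseudo-orbit.

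\textbf{No point shadows it.} Suppose $y$ $\varepsilon$-shadows $(x_j)$. Then $d(y,a)=d(y,x_0)<\varepsilon$. Using the isometry again,
\[
d\bigl(f^k(y),x_k\bigr)=d\bigl(f^k(y),f^k(b)\bigr)=d(y,b)<\varepsilon.
\]
Hence $D=d(a,b)\le d(a,y)+d(y,b)<2\varepsilon=D/2$, which is a contradiction. So no $\delta$ works for this $\varepsilon$, and $f$ fails finite shadowing.

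\textbf{Remaining points.} The only nontrivial ingredient is the chain-connectedness of a connected metric space. For a fixed $\eta$, the set of points reachable from $a$ by an $\eta$-chain is open, since it contains an $\eta$-ball around each of its points. Its complement is also open by the same argument, so by connectedness the set is all of $X$. The argument does not use surjectivity of $f$: an isometry need only preserve distances, and compactness, which in fact forces surjectivity, is not even needed. I expect no real obstacle beyond keeping the bookkeeping of the identity $d(f^j u,f^j v)=d(u,v)$ straight.
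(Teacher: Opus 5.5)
Your proof is correct and follows the paper's argument essentially verbatim. Like the paper, you take a $\delta$-chain from $a$ to $b$, form the pseudo-orbit $x_j=f^j(c_j)$, and use the isometry to pull the shadowing inequalities back to $d(y,c_j)<\varepsilon$. Evaluating these at the endpoints $j=0$ and $j=k$ contradicts $d(a,b)>2\varepsilon$.
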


\begin{proof}
	Suppose, towards a contradiction, that $f$ has finite shadowing.
	Choose $a,b\in X$ and $\varepsilon>0$ such that
	$d(a,b)>2\varepsilon$. Let $\delta>0$ be such that every finite
	$\delta$-pseudo-orbit is $\varepsilon$-shadowed.

Since $X$ is connected, it is $\delta$-chain connected. Choose
$a=c_0,c_1,\ldots,c_n=b$ such that
$d(c_i,c_{i+1})<\delta$ for every $i<n$, and put
$x_i:=f^i(c_i)$. Then
\[
d(f(x_i),x_{i+1})=d(c_i,c_{i+1})<\delta,
\]
so $x_0,\ldots,x_n$ is a finite $\delta$-pseudo-orbit.

If it were $\varepsilon$-shadowed by $x\in X$, then
$d(f^i(x),f^i(c_i))<\varepsilon$ for every $i$. Since $f$ is an
isometry, this gives $d(x,c_i)<\varepsilon$ for every $i$, contrary
to $d(a,b)>2\varepsilon$.
\end{proof}

\begin{theorem}\label{thm:semisimple-sur}
Let $S$ be a semisimple connected compact group and let
$\beta:S\to S$ be a surjective endomorphism. Let
$\tau:I\to I$ be the injective map induced by $\beta$ on the simple
factors of $S/Z(S)$ as in
Lemma~\ref{lem:semisimple-index-map}. Then the following conditions
are equivalent:
\begin{enumerate}
\item $\beta$ has shadowing;
\item $\tau$ has no periodic point;
\item every component of the functional graph of $\tau$ is a copy of
$\mathbb N$ or $\mathbb Z$.
\end{enumerate}
\end{theorem}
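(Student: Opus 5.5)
The equivalence of (2) and (3) is purely combinatorial. By the classification of components of the functional graph of an injective map recorded after Lemma~\ref{lem:semisimple-index-map}, every component is a finite cycle, a copy of $\mathbb Z$, or a copy of $\mathbb N$. A periodic point lies exactly on a finite cycle. So the substance of the theorem is (1)$\Leftrightarrow$(2).

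\emph{Reducing to the centre-free quotient.} I first pass from $S$ to $P:=S/Z(S)\cong\prod_{i\in I}S_i$. Since $\beta$ is surjective, $\beta(Z(S))\subseteq Z(S)$, so $\beta$ induces a surjective endomorphism $\bar\beta$ of $P$. The kernel $Z(S)$ is totally disconnected, so $\beta|_{Z(S)}$ has shadowing by Example~\ref{examples}(2). Lemma~\ref{lem:extension-shadowing} and Lemma~\ref{lem:quotient-shadowing} (quotient maps of compact groups are open) then give
\[
\beta\text{ has shadowing}\iff\bar\beta\text{ has shadowing}.
\]
From now on I work with $\bar\beta$ on $P$, written in the coordinate form \eqref{eq:coordinate-endo}.

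\emph{(2)$\Rightarrow$(1).} Decompose $I$ into $\tau$-components $I=\bigsqcup_c I_c$. Each $P_c:=\prod_{i\in I_c}S_i$ is $\bar\beta$-invariant, and $\bar\beta=\prod_c\bar\beta|_{P_c}$. For a component of type $\mathbb N$, choose an index $i_0$ not in $\tau(I)$, so that $I_c=\{i_0,\tau(i_0),\tau^2(i_0),\dots\}$. For a component of type $\mathbb Z$, enumerate it as $\{\tau^n(i_0):n\in\mathbb Z\}$. Composing the isomorphisms $\theta$ along the chain identifies every factor with a fixed group $H:=S_{i_0}$. Using these identifications, I conjugate $\bar\beta|_{P_c}$ to $\sigma_+$ on $H^{\mathbb N}$ or to $\sigma$ on $H^{\mathbb Z}$. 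Indeed, \eqref{eq:coordinate-endo} says the $i$-th coordinate of the image is the $\tau(i)$-th coordinate transported by $\theta_i$, which is a shift once the $\theta$'s are absorbed into the identifications. By Example~\ref{examples}(1), each of these shifts has shadowing; for the two-sided shift I use that two-sided shadowing implies one-sided shadowing. It remains to pass from the factors to the product. An arbitrary product is the inverse limit of its finite subproducts along surjective projections. A finite product of shadowing systems has shadowing, since one simply shadows coordinatewise. Theorem~\ref{thm:inverse-limit-shadowing} then gives shadowing for $\bar\beta$.

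\emph{(1)$\Rightarrow$(2).} Suppose $i$ is periodic with period $k$, and let $C=\{i,\tau(i),\dots,\tau^{k-1}(i)\}$. Then $Q:=\prod_{j\in C}S_j$ is a quotient of $P$ via the coordinate projection. This projection is equivariant, because \eqref{eq:coordinate-endo} shows that the coordinates indexed by $C$ depend only on coordinates indexed by $\tau(C)=C$. Lemma~\ref{lem:quotient-shadowing} therefore gives shadowing for the induced map $\gamma$ on $Q$, which is an automorphism of a nontrivial compact connected Lie group. Now $\gamma^k$ preserves each factor and acts on it as an automorphism; since $\operatorname{Out}$ of a compact simple Lie group is finite, some power $\gamma^{N}$ is inner. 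I expect the main obstacle here: showing that a power of a map with shadowing still has shadowing, and that an automorphism with an inner power is an isometry. For the first point, a $\delta$-pseudo-orbit of $\gamma^N$ can be interpolated by true $\gamma$-orbit segments to give a $\gamma$-pseudo-orbit, which handles the power. For the second, I would instead avoid powers altogether by averaging. The group generated by $\gamma$ acts on $Q$ through a group whose closure is compact, since $\gamma^N$ lies in the compact group of inner automorphisms and the quotient is finite. Averaging a bi-invariant metric over this compact group yields a compatible metric for which $\gamma$ itself is an isometry. Lemma~\ref{lem:iso-no-shad} then contradicts shadowing, so $\tau$ has no periodic point.
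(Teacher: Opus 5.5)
Your proof is correct and follows the paper's overall architecture. You pass to $S/Z(S)$ using Lemma~\ref{lem:quotient-shadowing} and Lemma~\ref{lem:extension-shadowing}; you do both directions up front, whereas the paper does the quotient step in (1)$\Rightarrow$(2) and the extension step at the end of (2)$\Rightarrow$(1). You then conjugate each $\mathbb N$- or $\mathbb Z$-component to a full shift and assemble the product via Theorem~\ref{thm:inverse-limit-shadowing}. For a periodic orbit $C$, you project onto $Q=\prod_{j\in C}S_j$ and apply Lemma~\ref{lem:iso-no-shad}.

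The one real difference is how you show that the induced automorphism $\gamma$ of $Q$ is an isometry. The paper notes that the Killing form is invariant under every Lie algebra automorphism. Hence every automorphism of a compact semisimple Lie group preserves the Riemannian metric given by the negative Killing form, and nothing about outer automorphisms is needed. You instead use finiteness of $\operatorname{Out}(S_j)$ to make some $\gamma^N$ inner. From this you conclude that $\overline{\langle\gamma\rangle}\subseteq\bigcup_{r<N}\gamma^r\operatorname{Inn}(Q)$ is compact, and you average a compatible metric against its Haar measure. This is valid: the action on the compact group $Q$ is jointly continuous, the averaged metric is positive and compatible, and it is $\gamma$-invariant because Haar measure is invariant. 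Your route isolates the underlying mechanism, namely equicontinuity of $\gamma$, at the cost of importing finiteness of $\operatorname{Out}$ and an averaging step.

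The power route you set aside already closes the argument. Your interpolation shows that $\gamma^N$ has shadowing with the same $\delta$. The inner automorphism $\gamma^N$ is an isometry for any bi-invariant metric on $Q$, so Lemma~\ref{lem:iso-no-shad} applies to $\gamma^N$ directly.
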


\begin{proof}
The equivalence of (2) and (3) follows from the injectivity of $\tau$.

Assume that $\beta$ has shadowing. By
Lemma~\ref{lem:quotient-shadowing}, the induced endomorphism
$\bar\beta$ on $S/Z(S)$ has shadowing.

Suppose that $\tau$ has a periodic orbit $C\subseteq I$. Then $C$ is
finite, and the projection onto
\[
S_C:=\prod_{i\in C}S_i
\]
is an equivariant factor map. The endomorphism induced by
$\bar\beta$ on $S_C$ is an automorphism of a nontrivial compact
connected semisimple Lie group.

Such an automorphism preserves the Riemannian metric induced by the
negative Killing form. It is therefore an isometry, contradicting
Lemma~\ref{lem:iso-no-shad}. Thus $\tau$ has no
periodic point.

Conversely, suppose that $\tau$ has no periodic point. By recursively
changing the coordinates along each component of the functional graph
of $\tau$, the isomorphisms $\theta_i$ in
\eqref{eq:coordinate-endo} can be removed. Consequently,
$(S/Z(S),\bar\beta)$ is conjugate to a product of systems of the form
\[
(H^{\mathbb N},\sigma_+)
\qquad\text{and}\qquad
(H^{\mathbb Z},\sigma),
\]
where $H$ is a centre-free simple connected compact Lie group,
$\sigma_+$ is the one-sided shift and $\sigma$ is the two-sided shift.

Both one-sided and two-sided full shifts have shadowing. Moreover, arbitrary products of systems with shadowing also have shadowing. 
Indeed, finite products preserve shadowing, while an arbitrary product is the inverse limit of its finite subproducts, with all bonding maps surjective. 
The conclusion therefore follows from the inverse-limit theorem for shadowing.

Finally, $Z(S)$ is compact and zero-dimensional, so
$\beta|_{Z(S)}$ has shadowing. Applying
Lemma~\ref{lem:extension-shadowing} to
\[
1\longrightarrow Z(S)\longrightarrow S
\longrightarrow S/Z(S)\longrightarrow 1
\]
shows that $\beta$ has shadowing.
\end{proof}

\begin{remark}\label{rem:semisimple-shifts}
The $\mathbb N$-components of $\tau$ correspond precisely to
one-sided shift factors, whereas the $\mathbb Z$-components correspond
to bilateral shift factors. Thus the new phenomenon for surjective
endomorphisms, in comparison with automorphisms, is the occurrence of
one-sided shifts.
\end{remark}

\subsection{The general connected case}

Let $K$ be a connected compact group. Recall from Section~\ref{pre} that
\[
K=AK',
\qquad
A:=Z(K)_0,
\]
and $D:=A\cap K'$ is a zero-dimensional central subgroup. Moreover,
$K'$ is connected and semisimple.

Let $\beta:K\to K$ be a surjective endomorphism. Then
$\beta(A)\subseteq A$ and $\beta(K')=K'$. Indeed, the first inclusion
follows because the image of the connected central group $A$ is
contained in $Z(K)_0$, while
\[
\beta(K')
=
[\beta(K),\beta(K)]
=
[K,K]
=
K'.
\]

\begin{proposition}\label{prop:central-semisimple-reduction}
Let $K$ be a compact connected group and let
$\beta:K\to K$ be a surjective endomorphism. Put
$A:=Z(K)_0$ and $S:=K'$. Then $\beta$ has shadowing if and only if
both $\beta|_A$ and $\beta|_S$ have shadowing.
\end{proposition}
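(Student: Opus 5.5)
The plan is to compare $K$ with the product $A\times S$ through the multiplication map, which has totally disconnected kernel. Both directions then reduce to Lemma~\ref{lem:quotient-shadowing}, Lemma~\ref{lem:extension-shadowing}, and the fact (Example~\ref{examples}(2)) that every endomorphism of a compact totally disconnected group has shadowing. Throughout, put $D:=A\cap S$. As recalled above, $D$ is a closed, central, totally disconnected subgroup. Since $\beta(A)\subseteq A$ and $\beta(S)=S$, we also have $\beta(D)\subseteq D$.

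\emph{Sufficiency.} Suppose $\beta|_A$ and $\beta|_S$ have shadowing.
\begin{enumerate}
\item Since $A$ is central, the multiplication map
\[
\mu:A\times S\longrightarrow K,\qquad \mu(a,s)=as,
\]
is a continuous homomorphism. It is surjective because $K=AS$.
\item It is open, since a continuous surjective homomorphism between compact groups is open.
\item It intertwines $\beta|_A\times\beta|_S$ with $\beta$, because $\beta(as)=\beta(a)\beta(s)$.
\item The product $\beta|_A\times\beta|_S$ has shadowing, since finite products preserve shadowing. This is already used in the proof of Proposition~\ref{prop:finite-dim}, and also follows directly from Lemma~\ref{lem:extension-shadowing} applied to the invariant normal factor $A\times\{e\}$.
\item Lemma~\ref{lem:quotient-shadowing} applied to $\mu$ then gives shadowing for $\beta$.
\end{enumerate}

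\emph{Necessity.} Suppose $\beta$ has shadowing. I would handle each factor by passing to a quotient of $K$ and then lifting back across $D$. The two quotients are $K/S$ and $K/A$. Both $S$ and $A$ are closed, normal and $\beta$-invariant, so $\beta$ induces endomorphisms of $K/S$ and $K/A$. By Lemma~\ref{lem:quotient-shadowing}, both induced maps have shadowing.

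For the abelian part, consider the map $A\to K/S$, $a\mapsto aS$.
\begin{enumerate}
\item It is continuous and equivariant.
\item It is surjective because $K=AS$.
\item Its kernel is $D$, so it induces a topological isomorphism $A/D\cong K/S$ of compact groups.
\item Under this isomorphism, the induced map $\overline{\beta|_A}$ on $A/D$ corresponds to the map on $K/S$, and hence has shadowing.
\item The restriction $\beta|_D$ has shadowing because $D$ is compact and totally disconnected.
\item Lemma~\ref{lem:extension-shadowing}, applied to $A$, the invariant closed normal subgroup $D$, and $\beta|_A$, yields shadowing for $\beta|_A$.
\end{enumerate}
The semisimple part is symmetric. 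The map $s\mapsto sA$ gives an equivariant isomorphism $S/D\cong K/A$, so the map induced by $\beta|_S$ on $S/D$ has shadowing. Since $D$ is central in $K$, it is normal in $S$. Lemma~\ref{lem:extension-shadowing} therefore gives shadowing for $\beta|_S$.

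The only points needing care are bookkeeping. One must check that each identification $A/D\cong K/S$ and $S/D\cong K/A$ is genuinely equivariant, which holds because $\beta(A)\subseteq A$ and $\beta(S)\subseteq S$. One must also check that $D$ is $\beta$-invariant, which is what allows Lemma~\ref{lem:extension-shadowing} to be applied. None of this is a real obstacle.

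The conceptual point, already stressed in Section~\ref{thA}, is that shadowing does not pass to arbitrary invariant closed subgroups. The argument avoids this difficulty because $A$ and $S$ are each quotients of $K$ modulo a totally disconnected group, namely $D$. This is what makes the reduction to $A$ and $S$ legitimate.
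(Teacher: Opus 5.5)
Your proof is correct and follows essentially the same route as the paper. Sufficiency goes through the multiplication map $A\times S\to K$ and Lemma~\ref{lem:quotient-shadowing}, and necessity obtains shadowing on $A/D$ and $S/D$ as factors of $\beta$, then lifts across the totally disconnected group $D$ via Lemma~\ref{lem:extension-shadowing}. The only cosmetic difference is that you realize $A/D$ and $S/D$ directly as $K/S$ and $K/A$, whereas the paper first passes to $K/D\cong A/D\times S/D$ and projects onto the two factors.
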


\begin{proof}
Suppose first that $\beta|_A$ and $\beta|_S$ have shadowing. Then the
product endomorphism on $A\times S$ has shadowing. The multiplication
map
\[
m:A\times S\to K,\qquad m(a,s)=as,
\]
is a continuous surjective homomorphism intertwining this product
endomorphism with $\beta$. Hence $\beta$ has shadowing by
Lemma~\ref{lem:quotient-shadowing}.

Conversely, suppose that $\beta$ has shadowing. Since
$\beta(A)\subseteq A$ and $\beta(S)\subseteq S$, we have
$\beta(D)\subseteq D$, where $D:=A\cap S$. Hence $\beta$ induces an
endomorphism $\bar\beta$ on $K/D$. By
Lemma~\ref{lem:quotient-shadowing}, $\bar\beta$ has shadowing.

Since $K=AS$, the subgroups $A/D$ and $S/D$ commute, have trivial
intersection, and generate $K/D$. Therefore the map
\[
(A/D)\times(S/D)\longrightarrow K/D,\qquad
(aD,sD)\longmapsto asD,
\]
is a topological isomorphism. Under this identification,
$\bar\beta$ is the product endomorphism
\[
\bar\beta_A\times\bar\beta_S,
\]
where $\bar\beta_A$ and $\bar\beta_S$ are induced by $\beta|_A$ and
$\beta|_S$, respectively.

The coordinate projections are equivariant surjective homomorphisms.
It follows from Lemma~\ref{lem:quotient-shadowing} that both
$\bar\beta_A$ and $\bar\beta_S$ have shadowing. Since $D$ is
zero-dimensional, $\beta|_D$ has shadowing. Applying
Lemma~\ref{lem:extension-shadowing} to the exact sequences
\[
1\longrightarrow D\longrightarrow A\longrightarrow A/D
\longrightarrow 1
\]
and
\[
1\longrightarrow D\longrightarrow S\longrightarrow S/D
\longrightarrow 1,
\]
we conclude that both $\beta|_A$ and $\beta|_S$ have shadowing.
\end{proof}

We can now state the main result.

\begin{theorem}\label{thm:connected-endo-shad}
Let $G$ be a compact connected group and let
$\alpha:G\to G$ be an endomorphism. Put
\[
K:=G_\infty=\bigcap_{n\geq 0}\alpha^n(G),
\qquad
\beta:=\alpha|_K,
\]
and let
\[
A:=Z(K)_0,
\qquad
S:=K'.
\]

Let $T_{\mathbb Q}$ be the rational linear endomorphism induced by
$\widehat{\beta|_A}$ on
\[
\widehat A\otimes_{\mathbb Z}\mathbb Q.
\]
Let $\tau:I\to I$ be the injective map induced by $\beta|_S$ on the
simple Lie factors of
\[
S/Z(S)\cong\prod_{i\in I}S_i.
\]
Then the following conditions are equivalent:
\begin{enumerate}
\item $\alpha$ has shadowing;
\item
\begin{enumerate}
\item for every finite-dimensional $T_{\mathbb Q}$-invariant subspace
$W\leq\widehat A\otimes_{\mathbb Z}\mathbb Q$, one has
\[
\operatorname{Spec}(T_{\mathbb Q}|_W)\cap\mathbb S^1
=
\varnothing;
\]
\item the map $\tau$ has no periodic point.
\end{enumerate}
\end{enumerate}
\end{theorem}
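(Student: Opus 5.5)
The proof will be a chain of equivalences assembled from results already established in this section. First, by \eqref{eq:stable-image-equiv}, which rests on Lemma~\ref{lem:stable-image} and the equivalence of shadowing with finite shadowing on compact spaces, $\alpha$ has shadowing if and only if $\beta=\alpha|_K$ has shadowing.

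We need $K$ to be connected, and we need $\beta$ to be surjective. Connectedness was noted at the start of the section: $K$ is a decreasing intersection of compact connected groups. Surjectivity is Lemma~\ref{lem:stable-image-surjective}. Thus $K$ carries the decomposition $K=AS$ with $A=Z(K)_0$ and $S=K'$. As observed before Proposition~\ref{prop:central-semisimple-reduction}, we have $\beta(A)\subseteq A$ and $\beta(S)=S$.

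Proposition~\ref{prop:central-semisimple-reduction} then shows that $\beta$ has shadowing if and only if both $\beta|_A$ and $\beta|_S$ have shadowing. I will treat the two factors separately.

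\emph{Abelian factor.} $A$ is a compact connected abelian group, and $T_{\mathbb Q}$ is exactly the rational map attached to $\beta|_A$. Theorem~\ref{thm:abelian-endo-shad} therefore gives: $\beta|_A$ has shadowing if and only if condition (2)(a) holds. Surjectivity of $\beta|_A$ is not needed here, since Theorem~\ref{thm:abelian-endo-shad} applies to arbitrary endomorphisms.

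\emph{Semisimple factor.} $S$ is a connected semisimple compact group and $\beta|_S$ is surjective, so Theorem~\ref{thm:semisimple-sur} applies: $\beta|_S$ has shadowing if and only if $\tau$ has no periodic point, which is condition (2)(b).

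One point to check is that $\tau$ is well defined. The endomorphism $\beta|_S$ maps $Z(S)$ into $Z(S)$, because it is surjective onto $S$ and the image of a central element commutes with $\beta(S)=S$. Hence $\beta|_S$ induces a surjective endomorphism of $S/Z(S)\cong\prod_i S_i$, and Lemma~\ref{lem:semisimple-index-map} supplies the injective map $\tau$. This is the same $\tau$ used in Theorem~\ref{thm:semisimple-sur}, so the two statements match.

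Combining the equivalences gives (1)$\Leftrightarrow$(2). I do not expect a serious obstacle. The only bookkeeping step is confirming that the hypotheses of each cited result hold for the restrictions to $K$, namely the connectedness of $K$, the surjectivity of $\beta|_S$, and the invariance of $Z(S)$. All of these follow from the remarks above.
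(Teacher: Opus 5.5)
Your proposal is correct and follows the paper's proof step for step. You reduce to the surjective map $\beta$ on the connected stable image via \eqref{eq:stable-image-equiv}, split it via Proposition~\ref{prop:central-semisimple-reduction}, and then apply Theorem~\ref{thm:abelian-endo-shad} to $\beta|_A$ and Theorem~\ref{thm:semisimple-sur} to $\beta|_S$. Your extra check that $\beta(Z(S))\subseteq Z(S)$, which makes $\tau$ well defined, is a detail the paper leaves implicit.
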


\begin{proof}The subgroup $K=G_\infty$ is connected, and
	$\beta=\alpha|_K$ is surjective by
	Lemma~\ref{lem:stable-image-surjective}.
	
By \eqref{eq:stable-image-equiv}, $\alpha$ has shadowing if and
only if $\beta$ has shadowing. By
Proposition~\ref{prop:central-semisimple-reduction}, this is equivalent
to both $\beta|_A$ and $\beta|_S$ having shadowing.

Theorem~\ref{thm:abelian-endo-shad} shows that $\beta|_A$ has
shadowing if and only if condition~(a) holds. Since $\beta|_S$ is
surjective, Theorem~\ref{thm:semisimple-sur} shows
that $\beta|_S$ has shadowing if and only if condition~(b) holds.
\end{proof}

\begin{remark}
Let $\overline\beta_S$ be the endomorphism of $S/Z(S)$ induced by
$\beta|_S$. The condition that $\tau$ has no periodic point admits an
equivalent dynamical formulation. Indeed, since $\tau:I\to I$ is
injective, every connected component of its directed graph is either
a one-sided chain, indexed by $\mathbb N$, or a two-sided chain,
indexed by $\mathbb Z$, provided that $\tau$ has no periodic point.
After suitable coordinatewise identifications of the simple Lie
factors along each component, the restriction of
$\overline\beta_S$ to that component becomes, respectively, a
one-sided or a two-sided full shift. Consequently,
\[
\tau\text{ has no periodic point}
\]
if and only if $\overline\beta_S$ is topologically conjugate to a
product of one-sided and two-sided full shifts on simple connected compact Lie groups.

Thus condition~{\rm(2)(b)} in
Theorem~\ref{thm:connected-endo-shad} may equivalently be stated as
requiring the endomorphism induced by $\beta|_S$ on $S/Z(S)$ to admit
such a shift decomposition.
\end{remark}

\subsection{The automorphism case}

For an automorphism, the stable image is the whole group. Moreover,
the induced linear map on the abelian dual is invertible, and the
index map on the simple semisimple factors is a permutation. We
therefore obtain the following consequence.

\begin{corollary}\label{cor:connected-aut-shadowing}
Let $G$ be a compact connected group and let
$\alpha\in\operatorname{Aut}(G)$. Put
\[
A:=Z(G)_0,
\qquad
S:=G'.
\]
Let $T_{\mathbb Q}$ be the automorphism induced by
$\widehat{\alpha|_A}$ on
$\widehat A\otimes_{\mathbb Z}\mathbb Q$, and let
$\tau:I\to I$ be the permutation induced by $\alpha|_S$ on the simple
Lie factors of
\[
S/Z(S)\cong\prod_{i\in I}S_i.
\]
Then the following conditions are equivalent:
\begin{enumerate}
\item $\alpha$ has shadowing;
\item $\alpha$ has two-sided shadowing;
\item
\begin{enumerate}
\item every finite-dimensional $T_{\mathbb Q}$-invariant subspace is
hyperbolic;
\item every orbit of $\tau$ is infinite.
\end{enumerate}
\end{enumerate}
\end{corollary}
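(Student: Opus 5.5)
The plan is to deduce the corollary from Theorem~\ref{thm:connected-endo-shad} together with the general equivalence, recorded in Section~\ref{pre}, between shadowing and two-sided shadowing for homeomorphisms of compact spaces.

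\textbf{(1)$\Leftrightarrow$(2).} Since $\alpha$ is an automorphism of a compact group, it is a homeomorphism of a compact Hausdorff space. The footnote in Section~\ref{pre} states that shadowing and two-sided shadowing coincide for such maps: one applies one-sided shadowing to longer and longer finite segments of a two-sided pseudo-orbit and takes a convergent subnet of the shadowing points. So nothing group-specific is needed for this equivalence.

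\textbf{(1)$\Leftrightarrow$(3).} We specialise Theorem~\ref{thm:connected-endo-shad}.

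\begin{itemize}
\item \emph{The data agree.} Because $\alpha$ is surjective, $\alpha^n(G)=G$ for all $n$. Hence $K=G_\infty=G$ and $\beta=\alpha$, so the groups $A=Z(K)_0=Z(G)_0$ and $S=K'=G'$ of the theorem are exactly those of the corollary.
\item \emph{$T_{\mathbb Q}$ is invertible.} We have $\alpha(A)\subseteq A$. Since $\alpha$ is an automorphism, it maps $Z(G)$ onto $Z(G)$ and therefore $Z(G)_0$ onto $Z(G)_0$. Thus $\alpha|_A\in\operatorname{Aut}(A)$, its dual $T$ is an automorphism of $\widehat A$, and $T_{\mathbb Q}$ is invertible, as the statement asserts. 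Condition~(a) of the theorem is then literally condition~(3)(a).
\item \emph{$\tau$ is a permutation.} We have $\alpha(S)=S$, and $\alpha(Z(S))=Z(S)$. Hence the induced map $\bar\alpha$ on $S/Z(S)\cong\prod_{i\in I}S_i$ is an automorphism. Apply Lemma~\ref{lem:semisimple-index-map} both to $\bar\alpha$ and to $\bar\alpha^{-1}$. The relation $p_i\circ\bar\alpha=\theta_i\circ p_{\tau(i)}$ shows that the index map attached to $\bar\alpha^{-1}$ is a two-sided inverse of $\tau$. So $\tau$ is a bijection of $I$.
\end{itemize}

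For a permutation, every orbit is either a finite cycle or a $\mathbb Z$-chain. Therefore "$\tau$ has no periodic point" is equivalent to "every orbit of $\tau$ is infinite", i.e.\ to (3)(b). Theorem~\ref{thm:connected-endo-shad} now gives (1)$\Leftrightarrow$(3).

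The only step needing care is confirming that $\tau$ is a bijection, which amounts to checking that the index map of Lemma~\ref{lem:semisimple-index-map} behaves functorially under inversion. Even without this, injectivity of $\tau$ alone already suffices for (1)$\Leftrightarrow$(3) via condition~(b).

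One point of bookkeeping concerns non-circularity. Theorem~\ref{thm:connected-endo-shad} is proved independently of Lemma~\ref{lem:aut-component}. The corollary is therefore available later for completing the deferred metrizable case of Lemma~\ref{lem:aut-component}.
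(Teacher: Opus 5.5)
Your proposal is correct and follows essentially the same route as the paper. (1)$\Leftrightarrow$(2) comes from the general equivalence of shadowing and two-sided shadowing for homeomorphisms of compact spaces, and (1)$\Leftrightarrow$(3) comes from specialising Theorem~\ref{thm:connected-endo-shad} with $K=G$, $\beta=\alpha$, invertible $T_{\mathbb Q}$ and bijective $\tau$. Your explicit verification that $\tau$ is a bijection, via the index map of $\bar\alpha^{-1}$, supplies a detail the paper only asserts, and your remark that \emph{no periodic point} is equivalent to \emph{every orbit infinite} already for injective $\tau$ is also correct.
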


\begin{proof}
For a homeomorphism of a compact space, shadowing and two-sided
shadowing are equivalent. The remaining equivalence follows from
Theorem~\ref{thm:connected-endo-shad}.

Indeed, since $\alpha$ is an automorphism, $T_{\mathbb Q}$ is
invertible, so zero cannot be an eigenvalue. Moreover, $\tau$ is
bijective. For a permutation, having no periodic point is equivalent
to every orbit being infinite. Each such orbit is indexed by
$\mathbb Z$ and gives a bilateral shift component.
\end{proof}

\begin{remark}
If $G$ is a nontrivial compact connected semisimple Lie group, then
the index set $I$ is finite. Every permutation of $I$ has a finite
orbit, and hence no automorphism of $G$ has shadowing.
\end{remark}

\subsection{Proof of the metrizable case of
	Lemma~\ref{lem:aut-component}}\label{subsec}

In what follows, we assume that $G$ is a compact metrizable group and
that $\gamma\in\Aut(G)$ has shadowing. We shall use the following
lemmas.

\begin{lemma}\label{lem:abelian-comp}
	\cite[Lemma~23]{AD}
	Let $K$ be a compact metrizable group whose identity component $K_0$
	is finite-dimensional and abelian. For every $\tau\in\Aut(K)$, there
	exists a closed normal totally disconnected subgroup $L$ of $K$ such
	that
	\[
	\tau(L)=L
	\qquad\text{and}\qquad
	K_0L\text{ is open in }K.
	\]
\end{lemma}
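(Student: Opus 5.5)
The plan is to build $L$ as a union of $\tau$-translates of one totally disconnected normal subgroup. Two ingredients control it: a dimension count, and the finiteness of the set of open subgroups lying above a fixed open subgroup.

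\emph{Step 1: pass to an open subgroup where $K_0$ is central.} Put $n:=\dim K_0$. Conjugation gives a continuous homomorphism $c:K\to\Aut(K_0)$. Since $K_0$ is a finite-dimensional solenoid, $\Aut(K_0)$ embeds in $\Aut(\widehat{K_0})\le GL_n(\mathbb Q)$. Here $\Aut(K_0)$ carries the compact-open topology, and the topology induced on $\Aut(\widehat{K_0})$ is that of pointwise convergence on the discrete group $\widehat{K_0}$. A compact set of automorphisms acts on each fixed character through finitely many values, so $c(K)$ is a compact group acting on the rank-$n$ group $\widehat{K_0}$ with finite orbits. Its image in $GL_n(\mathbb Q)$ is therefore finite, because an automorphism of $\widehat{K_0}$ is determined by $n$ rationally independent characters. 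It follows that $C:=C_K(K_0)=\ker c$ is an open normal subgroup of $K$. It is $\tau$-invariant because it is characteristic, and $K_0\le Z(C)$.

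\emph{Step 2: find one good non-invariant $N$.} Choose a closed normal subgroup $N\trianglelefteq K$ with $N\le C$, $K/N$ a compact Lie group, and $N\cap K_0$ so small that $\dim(K/N)=n$. This is possible because $K$ is a projective limit of Lie quotients and $\dim$ is additive. Since a surjection of compact groups maps $K_0$ onto $(K/N)_0$, the group $K_0N/N$ is the identity component of the Lie group $K/N$. Hence $K_0N$ is open. Moreover $N_0\le K_0\cap N$ and $\dim N=\dim K-\dim(K/N)=0$, so $N$ is totally disconnected.

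\emph{Step 3: make it $\tau$-invariant.} A product of two closed normal totally disconnected subgroups is again closed, normal and totally disconnected, since it is an extension of one by a quotient of the other. Hence
\[
M_k:=\tau^{-k}(N)\cdots\tau^{-1}(N)\,N\,\tau(N)\cdots\tau^k(N)
\]
are closed, normal and totally disconnected, and $K_0M_k\supseteq K_0N$ is open. Only finitely many subgroups contain the open subgroup $K_0N$, so the chain $K_0M_k$ stabilizes at some $k_0$. This yields $M_k=M_{k_0}(M_k\cap K_0)$ for $k\ge k_0$.

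The candidate is $L:=\overline{\bigcup_k M_k}$. It is closed, normal and $\tau$-invariant, and $K_0L$ is open. \textbf{Main obstacle:} $L$ must still be totally disconnected. Equivalently, $\overline{\bigcup_k (M_k\cap K_0)}$ must be a proper, zero-dimensional subgroup of $K_0$, and an increasing union of totally disconnected subgroups of a solenoid can be dense; think of the torsion subgroups of $\mathbb T$.

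To handle this, I would first try to choose $N$ in Step 2 so that $N\cap K_0\subseteq Z$, where $Z$ is a fixed $\tau$-invariant totally disconnected subgroup of $K_0$. Then every $M_k\cap K_0$ lies in $Z$, and so does the closure of their union. A natural choice of $Z$ is the annihilator of a $\widehat\tau$-invariant subgroup of full rank $n$ in $\widehat{K_0}$. Such a subgroup does not always exist as a free group. My plan is therefore to look for a finitely generated $\mathbb Z[u,u^{-1}]$-submodule of $\widehat{K_0}$ that spans $\widehat{K_0}\otimes\mathbb Q$, and to take $Z$ to be its annihilator. Since the submodule has full rank, the quotient $\widehat{K_0}/(\text{submodule})$ is torsion, which is exactly what makes $Z$ totally disconnected. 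The remaining check is that Step 2 can be carried out with $N\cap K_0\subseteq Z$, using a Lie quotient of $K/Z$. If this fails, the fallback is to cite \cite[Lemma~23]{AD} directly, as the paper does.
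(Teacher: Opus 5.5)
The paper gives no proof of this lemma; it quotes it from Aoki--Dateyama \cite[Lemma~23]{AD}, so your fallback is exactly what the paper does. As an independent proof, however, your argument has a genuine gap.

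Steps~1--3 are correct, but they only produce a closed normal $\tau$-invariant $L$ with $K_0L$ open, and that part is easy. The real content of the lemma is that $L$ can be taken totally disconnected, and your proposed repair does not establish this.

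The key inference ``$N\cap K_0\subseteq Z$ implies $M_k\cap K_0\subseteq Z$'' is false. An element of $M_k\cap K_0$ is a product $n_{-k}\cdots n_k$ with $n_j\in\tau^j(N)$, and the individual factors need not lie in $K_0$. So $M_k\cap K_0$ is not controlled by the groups $\tau^j(N\cap K_0)$. Here is an example:
\begin{itemize}
\item Take $K=\mathbb T\times(\mathbb Z/2)^{\mathbb Z}$ and $\tau=\id\times\sigma$ with $(\sigma d)_j=d_{j+1}$.
\item Take $Z$ trivial and $N=\{((-1)^{d_1},d):d_0=0\}$.
\item Let $\delta_j$ denote the sequence whose only nonzero coordinate is at $j$.
\end{itemize}
The map $(z,d)\mapsto(z(-1)^{d_1},d_0)$ has kernel $N$, so $K/N\cong\mathbb T\times\mathbb Z/2$ is Lie of dimension $1$. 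Also $N\cap K_0$ is trivial. But $(-1,\delta_1)\in N$ and $(1,\delta_1)=\tau(1,\delta_2)\in\tau(N)$. Hence $(-1,0)\in M_1\cap K_0\not\subseteq Z$. In this example $L$ happens to be totally disconnected anyway, but nothing in your argument shows that.

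Your ``remaining check'' also fails in general. If $K/N$ is Lie and $N\cap K_0\subseteq Z$, then $K_0/Z$ is a quotient of the torus $K_0/(N\cap K_0)$. So $Z^\perp$ would have to be finitely generated as a group. That is impossible when, say, $\widehat{\tau|_{K_0}}$ is multiplication by $2$ on $\mathbb Z[1/2]$: every nonzero invariant subgroup is then $2$-divisible. In addition, $Z$ must be normal in $K$, not merely $\tau$-invariant.

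What is missing is a finiteness argument that bounds all such cross terms at once. One way to supply it:
\begin{itemize}
\item Let $C=C_K(K_0)$ be as in your Step~1. Then $\overline{C'}$ is totally disconnected: in every Lie quotient of $C$ the centre has finite index, so the commutator subgroup is finite by Schur's theorem. It is also normal in $K$ and $\tau$-invariant.
\item The dual $X$ of the abelian group $C/\overline{C'}$ is a module over a skew Laurent polynomial ring over $\mathbb Z[K/C]$. Here the variable acts as $\widehat\tau$ and $K/C$ acts via conjugation. This ring is Noetherian.
\item Choose $x_1,\dots,x_n\in X$ that are independent modulo the torsion subgroup $t(X)$. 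The relations $(h_1,\dots,h_n)$ with $\sum_ih_ix_i\in t(X)$ form a finitely generated submodule. Hence a single integer $r$ annihilates every torsion element they produce.
\item Let $P$ be the submodule generated by $rx_1,\dots,rx_n$. Then $P\cap t(X)=0$ and $X/P$ is torsion.
\end{itemize}
The preimage of $P^\perp$ in $C$ is then the required $L$: it is closed, normal in $K$, $\tau$-invariant and totally disconnected, and $K_0L=C$.
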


\begin{lemma}\label{lem:AD-semisimple}
	\cite[Lemma~29]{AD}
	Let $K$ be a compact metrizable group such that $K_0$ is centre-free,
	and let $\tau\in\Aut(K)$. If $(K,\tau)$ has the shadowing property, then
	$K_0$ contains no nontrivial closed Lie subgroup $U$ such that
	\[
	\tau(U)=U
	\qquad\text{and}\qquad
	U\trianglelefteq K_0.
	\]
\end{lemma}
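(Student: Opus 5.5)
We argue by contradiction and reduce to the situation excluded by Lemma~\ref{lem:iso-no-shad}: an automorphism of a nontrivial compact connected semisimple Lie group, which is an isometry for the bi-invariant metric coming from the negative Killing form.

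\emph{Step 1: structure of $K_0$ and of $U$.} Since $K_0$ is connected and centre-free, $Z(K_0)_0$ is trivial. By the Mal'cev--Levi decomposition $K_0$ is semisimple, and being centre-free,
\[
K_0\cong\prod_{i\in I}S_i
\]
with each $S_i$ a centre-free simple connected compact Lie group. Suppose $U\le K_0$ is a nontrivial closed normal Lie subgroup with $\tau(U)=U$. For each $i$, the subgroup $[U,S_i]$ lies in $U\cap S_i$, which is normal in $S_i$ and hence trivial or equal to $S_i$. If it were trivial for every $i$, then $U$ would centralize $K_0$ and so be trivial. Applying the same argument to $U\cap\prod_{j\notin F}S_j$ shows that $U=\prod_{i\in F}S_i$ for a nonempty set $F\subseteq I$. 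The set $F$ is finite because $U$ is Lie, by the no-small-subgroups property.

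\emph{Step 2: pass to a quotient whose identity component is Lie.} $K$ acts by conjugation on the factors $S_i$, since these are exactly the minimal nontrivial closed connected normal subgroups of $K_0$. The stabilizer of each factor contains $K_0$ and is closed. I expect to show it is also open, using continuity of $k\mapsto kS_ik^{-1}$ together with the fact that distinct factors are uniformly far apart in the space of closed subgroups. Hence every $K$-orbit of factors is finite.

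Let $F'$ be the union of the $K$-orbits of the factors in $F$. Then $F'$ is finite and $\tau$-invariant: $\tau$ maps $K$-conjugates of factors of $U$ to $K$-conjugates of factors of $\tau(U)=U$. Put $M:=\prod_{i\notin F'}S_i$. This is a closed subgroup, normal in $K$, with $\tau(M)=M$. The quotient $Q:=K/M$ has $Q_0=\prod_{i\in F'}S_i$, a nontrivial centre-free semisimple Lie group. By Lemma~\ref{lem:quotient-shadowing}, the induced automorphism $\tau_Q$ has shadowing.

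\emph{Step 3: split off $Q_0$.} Let $C$ be the centralizer of $Q_0$ in $Q$. It is closed, normal, and $\tau_Q$-invariant. Moreover $C\cap Q_0=Z(Q_0)$ is trivial, so $C$ is totally disconnected. Since $\operatorname{Out}(Q_0)$ is finite and $\operatorname{Inn}(Q_0)\cong Q_0$, the conjugation map $Q\to\Aut(Q_0)$ shows that $Q_0C$ has finite index. Being closed, $Q_0C$ is therefore open in $Q$, and it is $\tau_Q$-invariant.

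Shadowing passes to an open invariant subgroup. To see this, take $U$ inside the subgroup: a pseudo-orbit in the subgroup is then shadowed by a point lying within $U$ of $x_0$, hence in the subgroup. The multiplication map $Q_0\times C\to Q_0C$ is an isomorphism, so $Q_0C/C\cong Q_0$ equivariantly. Lemma~\ref{lem:quotient-shadowing} then gives that $\tau_Q|_{Q_0}$ has shadowing. But this map is an automorphism of a nontrivial compact connected semisimple Lie group, hence an isometry for the negative Killing metric. Since finite shadowing and shadowing coincide on compact spaces, this contradicts Lemma~\ref{lem:iso-no-shad}.

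\emph{Main obstacle.} I expect the hardest point to be the openness of the stabilizers in Step~2, that is, that $K$ moves each simple factor of $K_0$ through only finitely many positions. I would first try to prove it as follows: a conjugate $kS_ik^{-1}$ with $k$ close to $e$ lies close to $S_i$, and a factor $S_j\neq S_i$ cannot, because $S_j$ meets any small neighbourhood of $S_i$ only near the identity. If this proves awkward, an alternative is to use the finiteness of $\operatorname{Out}$ directly on the finite-dimensional Lie algebra generated by the $K$-conjugates of $\mathfrak u$.
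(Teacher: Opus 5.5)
Your argument is correct. Note that the paper gives no proof of this statement: Lemma~\ref{lem:AD-semisimple} is quoted from Aoki--Dateyama \cite[Lemma~29]{AD} and used as a black box in \S\ref{subsec}, so your proposal is a genuinely independent, self-contained route. Its core is the same mechanism the paper uses for the necessity half of Theorem~\ref{thm:semisimple-sur}. A finite $\tau$-invariant block of simple factors yields an automorphism of a nontrivial compact semisimple Lie group. That automorphism is a Killing-form isometry, which contradicts Lemma~\ref{lem:iso-no-shad}.

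What is new in your Steps~2--3 is exactly what is needed to get around the fact, stressed in \S\ref{thA}, that shadowing does not pass to closed invariant subgroups such as $Q_0$. You first make the block normal in all of $K$ by saturating it under the finite $K$-orbits of factors. You then use finiteness of $\operatorname{Out}(Q_0)$ to produce the open invariant subgroup $Q_0C\cong Q_0\times C$, of which $Q_0$ is an equivariant quotient.

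The step you flagged as the main obstacle is easy. Pick $s\in S_i\setminus\{e\}$. The map $k\mapsto p_i(ksk^{-1})$ is continuous with value $s\neq e$ at $k=e$, so it stays nontrivial for $k$ near $e$. On the other hand, $p_i$ kills every $S_j$ with $j\neq i$. Hence $kS_ik^{-1}=S_i$ on a neighbourhood of $e$, and the normalizer is open. In Step~1 you should also state explicitly that $F:=\{i: S_i\subseteq U\}$ before using it.

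Comparing the two routes: the citation buys the paper brevity. Your argument buys self-containedness, using only Lemma~\ref{lem:quotient-shadowing}, the open-subgroup observation and Lemma~\ref{lem:iso-no-shad}, plus standard facts about compact semisimple Lie groups. Moreover, you never use metrizability of $K$; you only need $Q_0$ to be a Lie group. So your proof gives the lemma for arbitrary compact $K$, and the Lie hypothesis on $U$ enters only to make $F$ finite.
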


Put
\[
C:=G_0,
\qquad
\beta:=\gamma|_C,
\qquad
A:=Z(C)_0,
\qquad
S:=C'.
\]
We verify the two conditions in
Theorem~\ref{thm:connected-endo-shad} for $\beta$.

We first make a standard reduction. The subgroup $Z(S)$ is totally
disconnected, characteristic in $S$, and hence normal and
$\gamma$-invariant in $G$. Thus $\gamma|_{Z(S)}$ has shadowing. Since
\[
C/Z(S)\cong A/D\times S/Z(S),
\qquad D:=A\cap S,
\]
Lemma~\ref{lem:extension-shadowing} shows that it is enough to prove
that the automorphism induced by $\beta$ on $C/Z(S)$ has shadowing.
After replacing $G$ by $G/Z(S)$ and relabelling the corresponding
factors, we may therefore assume that
\[
C=A\times S,
\]
where $A$ is connected compact abelian and $S$ is a product of
centre-free simple connected compact Lie groups.

We also record a simple observation. If $H$ is an open
$\gamma$-invariant subgroup of $G$, then $\gamma|_H$ has shadowing.
Indeed, let $U$ be an identity neighbourhood in $H$. Since $H$ is
open, $U$ is also an identity neighbourhood in $G$. Choose an identity
neighbourhood $V\subseteq H$ such that every $V$-pseudo-orbit in $G$
is $U$-shadowed. If a $V$-pseudo-orbit contained in $H$ is shadowed by
$x\in G$, then
\[
x^{-1}x_0\in U\subseteq H.
\]
Since $x_0\in H$, it follows that $x\in H$. Hence
$\gamma|_H$ has shadowing.

We now consider the abelian part. Put
\[
\Gamma:=\widehat A,
\qquad
T:=\widehat{\gamma|_A},
\]
and let $T_{\mathbb Q}$ denote the induced automorphism of
$\Gamma\otimes_{\mathbb Z}\mathbb Q$.

Suppose, towards a contradiction, that there is a finite-dimensional
$T_{\mathbb Q}$-invariant subspace
\[
W_0\leq\Gamma\otimes_{\mathbb Z}\mathbb Q
\]
such that $T_{\mathbb Q}|_{W_0}$ is not hyperbolic.

Since $W_0\cap\Gamma$ spans $W_0$ over $\mathbb Q$, choose a
$\mathbb Q$-basis
\[
\chi_1,\ldots,\chi_r\in W_0\cap\Gamma.
\]
For each $j$, let
\[
W_{\chi_j}
=
\operatorname{span}_{\mathbb Q}
\{T^n\chi_j:n\in\mathbb Z\}.
\]
Then
\[
W_0=W_{\chi_1}+\cdots+W_{\chi_r}.
\]
Consequently, the minimal polynomial of $T_{\mathbb Q}|_{W_0}$ is
the least common multiple of the minimal polynomials of the
restrictions $T_{\mathbb Q}|_{W_{\chi_j}}$. Since
$T_{\mathbb Q}|_{W_0}$ is not hyperbolic, for some $j$ the minimal
polynomial of $T_{\mathbb Q}|_{W_{\chi_j}}$ has a root on the unit
circle. Fix such a $j$ and put $\chi:=\chi_j$.

Conjugation by $G$ on $A$ induces an action of $G$ on $\Gamma$,
defined by
\[
(g\cdot\eta)(a)
=
\eta(g^{-1}ag),
\qquad
g\in G,\ \eta\in\Gamma,\ a\in A.
\]
We extend this action $\mathbb Q$-linearly to
$\Gamma\otimes_{\mathbb Z}\mathbb Q$.

For every $\eta\in\Gamma$, the orbit map
\[
G\longrightarrow\Gamma,\qquad g\longmapsto g\cdot\eta,
\]
is continuous. Since $G$ is compact and $\Gamma$ is discrete, every
$G$-orbit in $\Gamma$ is finite. Moreover,
\[
T(g\cdot\eta)
=
\gamma^{-1}(g)\cdot T\eta.
\]

Put
\[
W
:=
\operatorname{span}_{\mathbb Q}
\{g\cdot\eta:g\in G,\ \eta\in W_\chi\}.
\]
The covariance relation shows that $W$ is invariant under
$T_{\mathbb Q}$, and it is clearly $G$-invariant.

Since $W_\chi$ is generated by the elements $T^n\chi$, we may choose
a basis
$
\eta_1,\ldots,\eta_m
$
of $W_\chi$ with each $\eta_j\in\Gamma$. Each $G$-orbit
$G\cdot\eta_j$ is finite. Hence
\[
W
=
\operatorname{span}_{\mathbb Q}
\big(\bigcup_{j=1}^mG\cdot\eta_j\big)
\]
is finite-dimensional.

 Since $W_\chi$ is a $T_{\mathbb Q}$-invariant
subspace of $W$, the restriction $T_{\mathbb Q}|_W$ is still not
hyperbolic.

Put
$
\Delta:=W\cap\Gamma.
$
Then $\Delta$ is a finite-rank, $G$-invariant and $T$-invariant
subgroup of $\Gamma$, and
$
\Delta\otimes_{\mathbb Z}\mathbb Q=W.
$
Let
\[
B:=\Delta^\perp
=
\{a\in A:\delta(a)=1\text{ for every }\delta\in\Delta\}.
\]
The $G$-invariance of $\Delta$ implies that $B$ is normal in $G$, and
the $T$-invariance of $\Delta$ implies that $\gamma(B)=B$. Moreover,
$
\widehat{A/B}\cong\Delta,
$
so $A/B$ is a finite-dimensional compact connected abelian group. Since $S=C'$ is characteristic in $C$ and $C=G_0$ is characteristic
in $G$, the subgroup $S$ is closed, normal and $\gamma$-invariant in
$G$. Since $C=A\times S$ and $B\leq A$, the subgroup $B\times S$ is
closed, normal and $\gamma$-invariant in $G$.

Consider the quotient
\[
K:=G/(B\times S),
\]
and let $\gamma_K\in\Aut(K)$ be the automorphism induced by $\gamma$. Since
$C=A\times S$, we have
\[
K_0\cong A/B.
\]
In particular, $K_0$ is finite-dimensional and abelian. By
Lemma~\ref{lem:quotient-shadowing}, $\gamma_K$ has shadowing.

Applying Lemma~\ref{lem:abelian-comp}, choose a closed normal totally
disconnected subgroup $L$ of $K$ such that
\[
\gamma_K(L)=L
\qquad\text{and}\qquad
K_0L\text{ is open in }K.
\]
The restriction
$
\lambda:=\gamma_K|_{K_0L}
$
has shadowing by the preceding observation. Since $L$ is normal and
$\lambda$-invariant, the induced automorphism
$
\overline\lambda
$
on
\[
(K_0L)/L
\cong
K_0/(K_0\cap L)
\]
also has shadowing.

Put
$
Q:=K_0\cap L.
$
Then $Q$ is totally disconnected. Under the identification
$
\widehat{K_0}\cong\Delta,
$
we have
\[
\widehat{K_0/Q}
\cong
Q^\perp
:=
\{\delta\in\Delta:\delta|_Q=1\}.
\]
The exact sequence
\[
0\longrightarrow Q^\perp
\longrightarrow \Delta
\longrightarrow \widehat Q
\longrightarrow 0
\]
shows that $\Delta/Q^\perp$ is torsion, because $Q$ is totally
disconnected and hence $\widehat Q$ is torsion. It follows that
\[
Q^\perp\otimes_{\mathbb Z}\mathbb Q
=
\Delta\otimes_{\mathbb Z}\mathbb Q
=
W.
\]

The dual automorphism of $\overline\lambda$ is the restriction of
$T$ to $Q^\perp$. Therefore its rational extension is precisely
$
T_{\mathbb Q}|_W.
$
Since $\overline\lambda$ has shadowing,
Proposition~\ref{prop:finite-dim} implies that
$T_{\mathbb Q}|_W$ is hyperbolic. This contradicts the construction
of $W$.

Hence $T_{\mathbb Q}|_W$ is hyperbolic for every finite-dimensional
$T_{\mathbb Q}$-invariant subspace
$W\leq\Gamma\otimes_{\mathbb Z}\mathbb Q$.

We now consider the semisimple part. By the reduction made at the
beginning of the proof, we may assume that
$S=\prod_{i\in I}S_i$, where each $S_i$ is a centre-free simple connected compact Lie group.

Let $p_i:S\to S_i$ denote the coordinate projection. The
automorphism $\beta|_S$ induces a permutation $\tau:I\to I$ and
isomorphisms $\theta_i:S_{\tau(i)}\to S_i$ such that
\[
p_i\circ\beta|_S=\theta_i\circ p_{\tau(i)}
\]
for every $i\in I$. Equivalently,
$\beta(S_{\tau(i)})=S_i$.

We claim that $\tau$ has no periodic point. Suppose otherwise, and let
$J\subseteq I$ be a finite orbit of $\tau$. Then the subproduct
\[
U:=\prod_{j\in J}S_j
\]
is a nontrivial compact connected semisimple Lie group. Since
$\tau(J)=J$, the preceding coordinate description implies that
$\beta(U)=U$. Moreover, $U$ is a closed normal subgroup of $S$.

Consider the quotient group $\overline G:=G/A$, and let
$\overline\gamma$ be the automorphism induced by $\gamma$.  Lemma~
\ref{lem:quotient-shadowing} implies that $\overline\gamma$ has
shadowing.

By our initial reduction, $C=A\times S$. Therefore
$\overline G_0=C/A\cong S$, which is centre-free. Under this
identification, $U$ is a nontrivial closed Lie subgroup of
$\overline G_0$, normal in $\overline G_0$, and invariant under
$\overline\gamma$. This contradicts Lemma~\ref{lem:AD-semisimple}.

Thus $\tau$ has no periodic point. Together with the conclusion
obtained for the abelian part, both conditions in
Theorem~\ref{thm:connected-endo-shad} are satisfied. Consequently,
$\beta=\gamma|_{G_0}$ has shadowing. This completes the proof of the
metrizable case of Lemma~\ref{lem:aut-component}.

\end{document}